\newcommand{\cev}[1]{\reflectbox{\ensuremath{\vec{\reflectbox{\ensuremath{#1}}}}}}           
\newcommand\Z{{\mathbb Z}}
\newcommand\R{{\mathbb R}}
\newcommand\C{{\mathbb C}}
\newcommand\cP{{\mathscr P}}
\newcommand\ra{\rightarrow}
\newcommand\ilim{\lim\limits_{\longleftarrow}\,}
\newcommand\Sp{\mathrm{Sp}}
\newcommand\aut{\mathrm{Aut}}
\newcommand\out{\mathrm{Out}}
\newcommand\inn{\mathrm{inn}}
\newcommand\lra{\longrightarrow}
\newcommand\hookra{\hookrightarrow}
\newcommand\tura{\twoheadrightarrow}
\newcommand\da{\downarrow}
\newcommand\dd{\partial}
\renewcommand{\hom}{\mathrm{Hom}}
\renewcommand{\Im}{\mathrm{Im}} 
\newcommand\sr{\stackrel}
\newcommand\st{\scriptstyle}
\newcommand\sst{\scriptscriptstyle}
\newcommand\cGG{\check{\GG}}
\newcommand\hGG{\hat{\GG}}
\newcommand\hp{\hat{\pi}}
\newcommand\hC{\hat{C}}
\newcommand\ssm{\smallsetminus}
\newcommand\ol{\overline}
\newcommand\ccM{\overline{\mathcal M}}
\newcommand\cM{{\mathcal M}}
\newcommand\cN{{\mathcal N}}
\newcommand\cB{{\mathcal B}}
\newcommand\cC{{\mathscr C}}
\newcommand\cE{{\mathscr E}}
\newcommand\cG{{\mathscr G}}
\newcommand\cS{{\mathscr S}}
\newcommand\cT{{\mathscr T}}
\newcommand\GG{\Gamma}
\newcommand\ld{\lambda}
\newcommand\Ld{\Lambda}
\newcommand\wh{\widehat}
\newcommand\wt{\widetilde}
\newcommand\wT{\wh{T}}
\newcommand\wM{\wh{\cM}}
\newcommand\hd{\hat{\Delta}}
\newcommand\od{\dot{\delta}}
\newcommand\td{\tilde}
\newcommand\sg{\sigma}
\newcommand\Sg{\Sigma}
\newcommand\gm{\gamma}
\newcommand\bt{\bullet}
\def\co{\colon\thinspace}
\newtheorem{theorem}{Theorem}[section]
\newtheorem{corollary}[theorem]{Corollary}
\newtheorem{proposition}[theorem]{Proposition}
\newtheorem{lemma}[theorem]{Lemma}
\newtheorem{question}[theorem]{Question}
\theoremstyle{definition}
\newtheorem{definition}[theorem]{Definition}   
\newtheorem{remark}[theorem]{Remark}      
\newtheorem{remarks}[theorem]{Remarks}
\begin{document}

\title{Galois coverings of moduli spaces of curves\\ and loci of curves with symmetry}
\author{Marco Boggi}\maketitle

\begin{abstract}
Let $\ccM_{g,[n]}$, for $2g-2+n>0$, be the stack of genus $g$, stable algebraic curves over $\C$,
endowed with $n$ unordered marked points. 

In \cite{L1}, Looijenga introduced the notion of Prym level 
structures in order to construct smooth projective Galois coverings of the stack $\ccM_{g}$.

In \S 2 of this paper, we introduce the notion of {\it Looijenga level structure} which generalizes 
Looijenga construction and provides a tower of Galois coverings of $\ccM_{g,[n]}$ equivalent 
to the tower of all geometric level structures over $\ccM_{g,[n]}$. 

In \S 3, Looijenga level structures are interpreted
geometrically in terms of moduli of curves with symmetry. A byproduct of this characterization is
a simple criterion for their smoothness. As a consequence of this criterion, it is shown that  
Looijenga level structures are smooth under mild hypotheses.

The second part of the paper, from \S 4, deals with the problem of describing the D--M boundary 
of Looijenga level structures. In \S 6, a description is given of the nerve of the D--M boundary of 
abelian level structures. In \S 7, it is shown how this construction can be used to 
"approximate" the nerve of Looijenga level structures. These results are then applied to elaborate  
a new approach to the congruence subgroup problem for the Teichm\"uller 
modular group along the lines of \cite{PFT}.
\newline

\noindent
{\bf MSC2010:} 14H10, 14H15, 14H30, 32G15, 30F60.
\end{abstract}

\section{Level structures over moduli of curves}\label{levels}
In this first section we give a preliminary exposition of the theory of level structures over moduli of 
curves, mostly needed in order to fix notation. So, let $\ccM_{g,n}$, for $2g-2+n>0$, be the stack of 
$n$--pointed, genus $g$, stable algebraic curves over $\C$.  It is a regular connected proper D--M 
stack over $\C$ of dimension $3g-3+n$, and it contains, as an open substack, the stack $\cM_{g,n}$ 
of $n$--pointed, genus $g$, smooth algebraic curves over $\C$. We will keep the same notations to 
denote the respective underlying analytic and topological stacks. 

There is a natural action of the symmetric group $\Sg_n$ on the $n$ labels which mark the curves
parametrized by the stack $\ccM_{g,n}$. This induces an action of $\Sg_n$ on $\ccM_{g,n}$
which is stack-theoretically free. The quotient by this action is denoted by $\ccM_{g,[n]}$ and is the 
stack of genus $g$, stable algebraic curves over $\C$, endowed with $n$ unordered marked points. 
The open substack parametrizing smooth curves is then denoted by $\cM_{g,[n]}$.

Let $S_{g,n}$ be an $n$-punctured, genus $g$ Riemann surface. Then, the universal cover of 
$\cM_{g,n}$, the Teichm{\"u}ller space $T_{g,n}$, is the stack of $n$--pointed, genus $g$, smooth 
complex analytic curves $(\cE\ra{\mathscr U}, s_1,\ldots, s_n)$ endowed with a topological trivialization 
$$\Phi\co S_{g,n}\times{\mathscr U}\sr{\sim}{\ra}\cE\ssm\bigcup_{i=1}^n s_i({\mathscr U})$$ 
over ${\mathscr U}$, where two such trivializations are considered equivalent when they are 
homotopic over ${\mathscr U}$. We then denote by $(\cE\ra{\mathscr U}, s_1,\ldots, s_n,\Phi)$ the 
corresponding object of $T_{g,n}$ or, when ${\mathscr U}$ is just one point, simply by $(E,\Phi)$.

From the existence of Kuranishi families, it follows that the complex analytic stack $T_{g,n}$ is
representable by a complex manifold (cf. \cite{A-C}, for more details on this approach). Then, it is 
not hard to prove that the complex manifold $T_{g,n}$ is contractible and that the natural map of 
complex analytic stacks $T_{g,n}\ra\cM_{g,[n]}$ is a universal cover. Its deck transformation 
group is described as follows. 

Let $\hom^+(S_{g,n})$ be the group of orientation preserving self-homeomorphisms
of $S_{g,n}$ and by $\hom^0(S_{g,n})$ the subgroup consisting of homeomorphisms 
homotopic to the identity. The mapping class group $\GG_{g,[n]}$ is defined to be the 
group of homotopy classes of homeomorphisms of $S_{g,n}$ which preserve the orientation:
$$\GG_{g,[n]}:=\left.\hom^+(S_{g,n})\right/\hom^0(S_{g,n}),$$  
where $\hom_0(S_{g,n})$ is the connected component of the identity 
in the topological group of homeomorphisms $\hom^+(S_{g,n})$. 
This group then is the deck transformation group of the covering $T_{g,n}\ra\cM_{g,[n]}$.
There is a short exact sequence:
$$1\ra\GG_{g,n}\ra\GG_{g,[n]}\ra\Sigma_n\ra 1,$$
where $\Sigma_n$ denotes the symmetric group on the set of punctures of $S_{g,n}$
and $\GG_{g,n}$ is the deck transformation group of the covering $T_{g,n}\ra\cM_{g,n}$. 

There is a natural way to define homotopy groups for topological D--M stacks, as done, for instance, 
by Noohi in  \cite{N1} and \cite{N2}. Therefore, the choice of a point $a=[C]\in\cM_{g,[n]}$ and a 
homeomorphism $\phi\co S_{g,n}\ra C$ identifies the topological fundamental group 
$\pi_1(\cM_{g,[n]},a)$ with the Teichm{\"u}ller modular group $\GG_{g,[n]}$.

The following notation will turn out to be useful later. For a given oriented Riemann surface
$S$ of negative Euler characteristic, let us denote by $\GG(S)$ the mapping class group of $S$ and 
by $\cM(S)$ and $\ccM(S)$, respectively, the D--M stack of smooth complex curves homeomorphic 
to $S$ and its D--M compactification. In particular, $\GG_{g,[n]}:=\GG(S_{g,n})$,
$\cM_{g,[n]}:=\cM(S_{g,n})$ and $\ccM_{g,[n]}:=\ccM(S_{g,n})$. 

Let $\cM_{g,[n]+1}$ be the moduli stack of genus $g$, stable algebraic curves over $\C$, 
endowed with $n$ unordered labels and one distinguished label $P_{n+1}$.
The morphism $\cM_{g,[n]+1}{\ra}\cM_{g,n}$, induced by forgetting $P_{n+1}$, is
naturally isomorphic to the universal $n$-punctured, genus $g$ curve $p\co\cC\ra\cM_{g,[n]}$. 
Since $p$ is a Serre fibration and $\pi_2(\cM_{g,[n]})=\pi_2(T_{g,n})=0$, there is a short exact 
sequence on fundamental groups
$$1\ra\pi_1(\cC_a,\tilde{a})\ra\pi_1(\cC,\tilde{a})\ra\pi_1(\cM_{g,[n]},a)\ra 1,$$
where $\tilde{a}$ is a point in the fiber $\cC_a$.
By a standard argument this defines a monodromy representation:
$$\rho_{g,[n]}\co\pi_1(\cM_{g,[n]},a)\ra \mbox{Out}(\pi_1(\cC_a,\tilde{a})),$$
called the {\it universal monodromy representation}. 

Let us then fix a homeomorphism $\phi\co S_{g,n}\ra\cC_a$ and let
$\Pi_{g,n}$ be the fundamental group of $S_{g,n}$ based in
$\phi^{-1}(\tilde{a})$. Then, the representation $\rho_{g,[n]}$ is identified with
the faithful representation $\GG_{g,[n]}\hookra\out(\Pi_{g,n})$, induced by the 
action, modulo homotopy, of $\GG_{g,[n]}$ on the Riemann surface $S_{g,n}$.

Let us give $\Pi_{g,n}$ the standard presentation:
$$ \Pi_{g,n}=\langle\alpha_1, \dots \alpha_g,\beta_1,\dots,\beta_g, u_1,\dots,u_n|\;
\prod_{i=1}^g[\alpha_i,\beta_i] \cdot u_n\cdots u_1\rangle,$$ 
where $u_i$, for $i=1,\dots,n$, is a simple loop around the  puncture $P_i$. For $n\ge 1$, let
$A(g,n)$ be the group of automorphisms of $\Pi_{g,n}$ which  fix the set of conjugacy classes of
all $u_i$. For $n=0$,  let instead $A(g,0)$ be the image of
$A(g,1)$ in the automorphism group of $\Pi_g:=\Pi_{g,0}$. Finally, let $I(g,n)$ be the group of 
inner  automorphisms of $\Pi_{g,n}$.  With these notations, the Nielsen realization Theorem says 
that the representation $\rho_{g,[n]}$ induces an isomorphism $\GG_{g,[n]}\cong A(g,n)/I(g,n)$.

In this paper, a level structure $\cM^\ld$ is a finite, connected, Galois, {\'e}tale covering of the stack 
$\cM_{g,[n]}$ (by {\'e}tale covering, we mean here an {\'e}tale, surjective, representable morphism of
algebraic stacks), therefore it is also a regular D--M stack.  The {\it level} 
associated to $\cM^\ld$ is the finite index normal subgroup
$\GG^\ld\cong\pi_1(\cM^\ld, a')$ of the Teichm{\"u}ller group $\GG_{g,[n]}$.
 
A level structure $\cM^{\ld'}$ {\it dominates} $\cM^\ld$, if there is a natural {\'e}tale
morphism $\cM^{\ld'}\ra\cM^{\ld}$ or, equivalently, $\GG^{\ld'}\leq\GG^\ld$. To stress the fact that
$\cM^\ld$ is a level structure over $\cM_{g,[n]}$ (respectively, $\cM(S)$), we will sometimes denote it 
by $\cM_{g,[n]}^\ld$ (respectively, $\cM(S)^\ld$).

For a given level $\GG^\ld$ of $\GG_{g,[n]}$, the intersection $\GG^\ld\cap\GG_{g,n}$ is also
a level of $\GG_{g,[n]}$ which we denote by $\GG^\ld_{g,n}$. The corresponding level structure is
denoted by $\cM_{g,n}^\ld$. Equivalently, the level structure $\cM_{g,n}^\ld$ is the pull-back over 
$\cM_{g,n}\ra\cM_{g,[n]}$ of the level structure $\cM^\ld$.

The most natural way to define levels is provided by the universal monodromy representation 
$\rho_{g,[n]}$. In general, for a subgroup $K\le\Pi_{g,n}$, which is invariant under $A(g,n)$ 
(in such case, we simply say that $K$ is {\it invariant}), let us define the representation:
$$\rho_K\co\GG_{g,[n]}\ra \mbox{Out}(\Pi_{g,n}/K),$$
whose kernel we denote by $\GG^K$. When $K$ has finite index in $\Pi_{g,n}$, then
$\GG^K$ has finite index in $\GG_{g,[n]}$ and is called the {\it geometric level} associated to
$K$. The corresponding level structure is denoted by $\cM^{K}$. 

Of particular interest are the levels defined by the kernels of the representations:
$$\rho_{(m)}\co\GG_{g,[n]}\ra \mbox{Sp}(H_1(S_g,\Z/m)),\;\;\mbox{ for }m\geq 2.$$
They are denoted by $\GG(m)$ and called {\it abelian levels of order $m$}. The corresponding
level structures are then denoted by $\cM^{(m)}$. 

A result by Serre asserts that an automorphism of a smooth curve of genus $\geq 1$ acting 
trivially on its first homology group with $\Z/m$ coefficients, for $m\geq 3$, is trivial 
(cf. Lemma~2.9, Ch. XVII, \cite{A-C}). 
This implies that, for $g\geq 1$, any level structure $\cM^\ld$ over $\cM_{g,[n]}$, which dominates 
an abelian level structure $\cM^{(m)}$, with $m\geq 3$, is representable in the category of 
algebraic varieties.

\section{Looijenga level structures}\label{Loo}

There is another way to define levels of $\GG_{g,[n]}$ which turn out to be more
tractable in a series of applications. They were introduced by Looijenga in \cite{L1}.
Here, we generalize his definition and, in Section~\ref{symmetry}, we will give a 
geometric interpretation of the construction.

The idea, which underlies this construction and is more or less implicit in Looijenga's definition, 
is that the information present in the algebraic
fundamental group of an hyperbolic curve can be recovered from the first
homology groups of all its finite coverings with the induced Galois actions. This idea was further
developed by Mochizuki \cite{M} in his proof of the anabelian conjecture for hyperbolic curves.
Here, this idea is fully implemented to the study of Galois coverings of moduli spaces of curves. 

Let $K$ be a normal finite index subgroup of $\Pi_{g,n}$ and let $p_K\co S_K\ra S_{g,n}$ be 
the \'etale Galois covering with deck transformation group $G_K$, associated to such subgroup.  
There is then a natural monomorphism $G_K\hookra\GG(S_K)$ and
the quotient of the normalizer $N_{\GG(S_K)}(G_K)$ by $G_K$ identifies with
a finite index subgroup of $\GG_{g,[n]}$. If we assume, moreover, that $K$ is invariant for the
action of $\GG_{g,[n]}$, then any homeomorphism $f\co S_{g,n}\ra S_{g,n}$ lifts to a 
homeomorphism $\tilde{f}\co S_K\ra S_K$. So, there is a natural short exact sequence:
$$1\ra G_K\ra N_{\GG(S_K)}(G_K)\ra\GG_{g,[n]}\ra 1.\hspace{1cm}(1)$$

Let us assume that $K$ is a {\it proper} invariant subgroup of $\Pi_{g,n}$. Then, the covering
$p_K\co S_K\ra S_{g,n}$ ramifies non-trivially over all punctures of $S_{g,n}$. From Hurwitz's formula, 
for $g\geq 1$ or $n\geq 4$, it follows that the genus of the compact Riemann surface $\ol{S}_K$ 
obtained filling in the punctures of $S_K$ is at least one. For $m\geq 2$, let us then consider the 
natural representation $\rho_{(m)}\co\GG(S_K)\ra\Sp(H_1(\ol{S}_K,\Z/m))$.

\begin{remark}\label{no-hyperelliptic}
For $m\geq 3$ and $m=0$, the restriction of $\rho_{(m)}$ to $G_K$ is faithful. For $m=2$, the 
restriction $\rho_{(m)}|_{G_K}$ is faithful unless $G_K$ contains a hyperelliptic involution $\iota$
(cf. Lemma~2.9, Ch. XVII, \cite{A-C}), in which case $\iota$ generates the kernel of 
$\rho_{(m)}|_{G_K}$. In particular, for $g\geq 1$ or, for $g=0$, if $[\Pi_{0,n}:K]>2$, the group 
$G_K$ does not contain a hyperelliptic involution and so the restriction of $\rho_{(2)}$ to 
$G_K$ is also faithful.  
\end{remark}

Let us assume that the restriction $\rho_{(m)}|_{G_K}$ is faithful and let us denote by $G_K$  
its image. For $m\geq 2$, there is a natural representation:
$$\rho_{K,(m)}\co\GG_{g,[n]}\ra\left. N_{\Sp(H_1(\ol{S}_K,\Z/m))}(G_K)\right/G_K.$$
Let us denote the kernel of $\rho_{K,(m)}$ by $\GG^{K,(m)}$ 
and call it the {\it Looijenga level} associated to the subgroup $K$ of $\Pi_{g,n}$. 
The corresponding level structure is denoted by $\cM^{K,(m)}$.

Geometric levels can also be described in terms of the exact sequence $(1)$.
The geometric level $\GG^K$ associated to $K$ is indeed the set of elements of $\GG_{g,[n]}$ 
which admit a lift, through the natural epimorphism $N_{\GG(S_K)}(G_K)\tura\GG_{g,[n]}$,  to the 
centralizer $Z_{\GG(S_K)}(G_K)$. Thus, there is a short exact sequence:
$$1\ra Z(G_K)\ra Z_{\GG(S_K)}(G_K)\ra\GG^K\ra 1,\hspace{1cm}(2)$$
where $Z(G_K)$ is the center of the group $G_K$. If this center is trivial, the geometric level 
$\GG^K$ is then identified with the centralizer of $G_K$ inside $\GG(S_K)$.

Let us observe that a normal subgroup $H$ of $N_{\GG(S_K)}(G_K)$ such that
$H\cap G_K=\{1\}$ centralizes $G_K$, since, for $x\in H$ and $y\in G_K$, it holds 
$xyx^{-1}y^{-1}\in H\cap G_K$. Therefore, if $Z(G_K)=\{1\}$, then the centralizer
$Z_{\GG(S_K)}(G_K)$ is the maximal normal subgroup of the normalizer $N_{\GG(S_K)}(G_K)$
which intersects trivially the group $G_K$.

For a given finite index invariant subgroup $K\unlhd\Pi_{g,n}$, the subgroup $K':=[K,K]K^m$ of $K$, 
generated by commutators and $m$-th powers, is invariant and of finite index in $\Pi_{g,n}$ 
and the associated geometric level $\GG^{K'}$ is contained in the Looijenga level 
$\GG^{K,(m)}$. Conversely, it holds:

\begin{theorem}\label{comparison}For $2g-2+n>0$, let $K$ be a finite index invariant subgroup 
of $\Pi_{g,n}$. Then, for $m\geq 3$, or, for $m\geq 2$, if the quotient group $G_K$ does not contain 
a hyperelliptic involution (cf. Remark~\ref{no-hyperelliptic}), there is an inclusion of levels 
$\GG^{K,(m)}\unlhd\GG^K$.
\end{theorem}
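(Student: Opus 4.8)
The plan is to prove directly the only nontrivial inclusion, namely that every element of the Looijenga level $\GG^{K,(m)}$ already lies in the geometric level $\GG^K$, using the description of $\GG^K$ furnished by the exact sequence $(2)$: it is the image in $\GG_{g,[n]}$ of the centralizer $Z_{\GG(S_K)}(G_K)$. If $K=\Pi_{g,n}$ the statement is vacuous, since then $G_K=\{1\}$ and both levels equal $\GG_{g,[n]}$; so we may assume $K$ proper, in which case $\ol{S}_K$ has positive genus and, under the hypotheses on $m$ (namely $m\geq 3$, or $m\geq 2$ together with the absence of a hyperelliptic involution in $G_K$), Remark~\ref{no-hyperelliptic} guarantees that the restriction $\rho_{(m)}|_{G_K}\co G_K\hookra\Sp(H_1(\ol{S}_K,\Z/m))$ is faithful. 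As in the definition of $\rho_{K,(m)}$, I identify $G_K$ with its image under this faithful representation.

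First I would take $\gamma\in\GG^{K,(m)}$ and, using the exact sequence $(1)$, choose any lift $\tilde{\gamma}\in N_{\GG(S_K)}(G_K)$. Since $\tilde{\gamma}$ normalizes $G_K$, the symplectic automorphism $\rho_{(m)}(\tilde{\gamma})$ normalizes $G_K\leq\Sp(H_1(\ol{S}_K,\Z/m))$, and the condition $\gamma\in\ker\rho_{K,(m)}$ says precisely that the class of $\rho_{(m)}(\tilde{\gamma})$ in $N_{\Sp(H_1(\ol{S}_K,\Z/m))}(G_K)/G_K$ is trivial, i.e. $\rho_{(m)}(\tilde{\gamma})\in G_K$. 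By faithfulness of $\rho_{(m)}|_{G_K}$ there is a (unique) $g\in G_K\leq\GG(S_K)$ with $\rho_{(m)}(g)=\rho_{(m)}(\tilde{\gamma})$. Replacing $\tilde{\gamma}$ by $\tilde{\gamma}':=g^{-1}\tilde{\gamma}$ — which is still a lift of $\gamma$ since $g\in G_K$ — I may therefore assume that $\tilde{\gamma}'$ acts trivially on $H_1(\ol{S}_K,\Z/m)$.

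Next I would show that such a lift $\tilde{\gamma}'$ centralizes $G_K$. For any $h\in G_K$, the commutator $[\tilde{\gamma}',h]=\tilde{\gamma}'h\tilde{\gamma}'^{-1}h^{-1}$ lies in $G_K$ because $\tilde{\gamma}'$ normalizes $G_K$; applying the homomorphism $\rho_{(m)}$ gives $\rho_{(m)}([\tilde{\gamma}',h])=[\rho_{(m)}(\tilde{\gamma}'),\rho_{(m)}(h)]=1$, since $\rho_{(m)}(\tilde{\gamma}')=1$. Thus $[\tilde{\gamma}',h]$ is an element of $G_K$ acting trivially on $H_1(\ol{S}_K,\Z/m)$, so it is trivial by faithfulness. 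Hence $\tilde{\gamma}'\in Z_{\GG(S_K)}(G_K)$, and by the exact sequence $(2)$ its image $\gamma$ lies in $\GG^K$. This establishes $\GG^{K,(m)}\leq\GG^K$; the normality $\GG^{K,(m)}\unlhd\GG^K$ is then automatic, because $\GG^{K,(m)}=\ker\rho_{K,(m)}$ is normal in the whole of $\GG_{g,[n]}$, which contains $\GG^K$.

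The argument is short, and I do not expect a serious obstacle: it is essentially a diagram chase through the exact sequences $(1)$ and $(2)$. The one point that genuinely carries the proof is the passage from ``trivial class in $N_{\Sp}(G_K)/G_K$'' to ``admits a lift in $Z_{\GG(S_K)}(G_K)$'', and this rests entirely on the faithfulness of $\rho_{(m)}$ on $G_K$ — which is exactly why the hypothesis on $m$ and on the hyperelliptic involution is imposed and cannot be dropped (for $m=2$ with a hyperelliptic involution $\iota\in G_K$, the commutator step would only yield $[\tilde{\gamma}',h]\in\{1,\iota\}$, and the inclusion can fail). So the ``hard part'' is really the choice of hypotheses, which the statement already makes for us.
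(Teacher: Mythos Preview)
Your proof is correct and follows essentially the same approach as the paper: the paper packages your element-by-element argument into a single step by observing that the kernel $\tilde{\GG}^{K,(m)}:=\ker\big(N_{\GG(S_K)}(G_K)\to\Sp(H_1(\ol{S}_K,\Z/m))\big)$ is a normal subgroup of $N_{\GG(S_K)}(G_K)$ meeting $G_K$ trivially (by faithfulness), hence centralizes $G_K$ by the commutator observation you wrote out explicitly, and then maps onto $\GG^{K,(m)}$. Your adjustment of the lift by $g\in G_K$ is exactly what shows $\tilde{\GG}^{K,(m)}\to\GG^{K,(m)}$ is surjective, so the two arguments are the same up to packaging.
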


\begin{proof}The action of the group $G_K$ on the surface $S_K$ 
induces, for $m\geq 2$, a faithful action on $H_1(\ol{S}_K,\Z/m))$.
Let us denote by $\tilde{\GG}^{K,(m)}$ the kernel of the natural representation
$$\tilde{\rho}_{K,(m)}\co N_{\GG(S_K)}(G_K)\ra\Sp(H_1(\ol{S}_K,\Z/m)).$$ 
It follows that $\tilde{\GG}^{K,(m)}\cap G_K=\{1\}$ 
and then $\tilde{\GG}^{K,(m)}$ is contained in the centralizer of $G_K$.
Hence, the Looijenga level $\GG^{K,(m)}$ is contained in the geometric level $\GG^K$.

\end{proof}

\begin{corollary}\label{cofinal}For any fixed integer $m\geq 2$, the set of Looijenga levels 
$\{\GG^{K,(m)}\}_{K\unlhd\Pi_{g,n}}$ forms an inverse system of finite index normal subgroups of 
$\GG_{g,[n]}$ which defines the same profinite topology as the tower of all geometric levels 
$\{\GG^K\}_{K\unlhd\Pi_{g,n}}$.
\end{corollary}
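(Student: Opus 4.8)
The plan is to establish two inclusions between the two towers, each up to passing to a further level, so that the induced profinite topologies coincide. One direction is already essentially recorded: by Theorem~\ref{comparison}, for $m\geq 3$ (or $m\geq 2$ when $G_K$ contains no hyperelliptic involution) we have $\GG^{K,(m)}\unlhd\GG^K$, so every geometric level contains a Looijenga level with the \emph{same} $K$ when $m$ satisfies the hypothesis. For a general fixed $m\geq 2$ I would reduce to this case: given an arbitrary finite index invariant $K\unlhd\Pi_{g,n}$, replace $K$ by the invariant finite index subgroup $K_1:=[K,K]K^{3}\cap K$ (intersection of finitely many characteristic-type subgroups is again invariant and of finite index), for which the covering $S_{K_1}\ra S_{g,n}$ has deck group with no hyperelliptic involution (the argument of Remark~\ref{no-hyperelliptic} applies once the covering is large enough), and then invoke Theorem~\ref{comparison} to get $\GG^{K_1,(m)}\unlhd\GG^{K_1}\unlhd\GG^{K}$ once one checks $\GG^{K_1}\unlhd\GG^{K}$, which follows from $K_1\le K$ and invariance. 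Hence every geometric level is dominated by some Looijenga level of order $m$.

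For the reverse inclusion I would use the remark preceding Theorem~\ref{comparison}: for any finite index invariant $K\unlhd\Pi_{g,n}$, the subgroup $K':=[K,K]K^{m}$ is again invariant and of finite index in $\Pi_{g,n}$, and $\GG^{K'}\subseteq\GG^{K,(m)}$. Thus every Looijenga level of order $m$ is dominated by a geometric level. Combining the two directions, the two inverse systems are cofinal in one another inside the lattice of finite index normal subgroups of $\GG_{g,[n]}$, which is exactly the assertion that they define the same profinite topology; it remains only to note that each family is closed under finite intersection up to refinement (for the Looijenga side, $\GG^{K_1,(m)}\cap\GG^{K_2,(m)}\supseteq\GG^{K,(m)}$ for $K\le K_1\cap K_2$ invariant, e.g. $K$ the invariant core of $K_1\cap K_2$), so each is genuinely an inverse system and not merely a family of subgroups.

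I expect the main obstacle to be the bookkeeping around the hyperelliptic exception at $m=2$: to deduce the inclusion $\GG^{K,(m)}\unlhd\GG^{K}$ uniformly I must enlarge $K$ so that $G_K$ acquires no hyperelliptic involution, and I should check that enlarging $K$ does not destroy invariance and only shrinks the geometric level (so the chain of inclusions still lands inside $\GG^{K}$). A secondary point requiring a line of justification is that shrinking $K$ to $K'=[K,K]K^m$ keeps $K'$ invariant: this is because $A(g,n)$ acts by automorphisms, hence preserves the verbal subgroup generated by $m$-th powers and commutators of $K$ once $K$ itself is preserved. Both are routine but must be stated; everything else is a formal consequence of Theorem~\ref{comparison} and the displayed remark, together with the trivial observation that ``same profinite topology'' for two towers of finite index normal subgroups is precisely mutual cofinality.
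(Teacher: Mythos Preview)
Your approach is correct and matches the paper's: the corollary is stated there without proof, as an immediate consequence of the two ingredients you use, namely the inclusion $\GG^{[K,K]K^m}\leq\GG^{K,(m)}$ recorded just before Theorem~\ref{comparison}, and the inclusion $\GG^{K,(m)}\leq\GG^{K}$ from Theorem~\ref{comparison} itself. Your extra care at $m=2$ is more than is strictly needed: by Remark~\ref{no-hyperelliptic}, the hyperelliptic exception occurs only for $g=0$ and $[\Pi_{0,n}:K]=2$, so for $g\geq 1$ Theorem~\ref{comparison} already applies to every proper invariant $K$, and for $g=0$ one only needs to pass to any invariant $K_1\leq K$ of index $>2$.

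One small point to flag: your argument that the Looijenga family is directed asserts $\GG^{K,(m)}\leq\GG^{K_i,(m)}$ whenever $K\leq K_i$, but this monotonicity is not obvious (it would require that a lift of $f$ to $\GG(S_K)$ acting trivially on $H_1(\ol S_K,\Z/m)$ descends to a lift to $\GG(S_{K_i})$ acting trivially on $H_1(\ol S_{K_i},\Z/m)$, and the pushforward on homology of a branched cover need not be surjective mod $m$). Fortunately you do not need it: once you have mutual cofinality with the geometric tower, directedness of $\{\GG^{K,(m)}\}$ follows formally, since $\GG^{K_1,(m)}\cap\GG^{K_2,(m)}\supseteq\GG^{K_1'}\cap\GG^{K_2'}\supseteq\GG^{K_3}$ for a suitable invariant $K_3\leq K_1'\cap K_2'$ (here $K_i'=[K_i,K_i]K_i^m$), and then $\GG^{K_3}\supseteq\GG^{K_3,(m)}$ by Theorem~\ref{comparison}.
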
 

Corollary~\ref{cofinal} vindicates the claim we made at the beginning of this section. 
For instance, in the study of the congruence topology of the Teichm\"uller group $\GG_{g,n}$,
Looijenga levels can replace  geometric levels. In Section~\ref{congruence}, the utility 
of this approach will emerge more clearly. 

In the hypotheses of Theorem~\ref{comparison}, the group $G_K$ acts faithfully on the homology 
group $H_1(\ol{S}_K,\Z/m)$. So, the intersection of $G_K$ with the level $\GG(m)$ of $\GG(S_K)$ 
is trivial and, by definition of the Looijenga level $\GG^{K,(m)}$, the natural epimorphism 
$N_{\GG(S_K)}(G_K)\tura\GG_{g,[n]}$ induces, for all $m\geq 2$, an isomorphism:
$$\tilde{\GG}^{K,(m)}=N_{\GG(S_K)}(G_K)\cap\GG(m)\cong\GG^{K,(m)}.\hspace{1cm}(3)$$
More explicitly, an $f\in\GG^{K,(m)}$ has a unique lift $\tilde{f}\co S_K\ra S_K$ which acts trivially 
on $H_1(\ol{S}_K,\Z/m)$. Hence, by Theorem~\ref{comparison}, for every $m\geq 2$, the 
Looijenga level $\GG^{K,(m)}$ has an associated Torelli representation:
$$t_{K,(m)}\co\GG^{K,(m)}\ra Z_{\Sp(H_1(\ol{S}_K,\Z))}(G_K).$$

A natural question is whether the image of $t_{K,(m)}$ has finite index in its codomain
for any invariant finite index subgroup $K$ of $\Pi_{g,n}$ as above, 
and $m\geq 2$. This question was addressed positively by Looijenga in \cite{L2}, for $n=0$
and the levels associated to the subgroup $\Pi^2$ (the so-called Prym levels).

\section{Level structures and loci of curves with symmetry}\label{symmetry}

In this section, we give a geometric construction which describes Looijenga level structures in 
terms of moduli of curves with a given group of automorphisms and of
abelian level structures. This has some immediate applications to the problem of describing locally
the D--M compactification of a Looijenga level structure.

As above, let $K$ be an invariant finite index subgroup of $\Pi_{g,n}$ and let 
$p_K\co S_K\ra S_{g,n}$ be the \'etale Galois covering with deck transformation group $G_K$, 
associated to such subgroup. Let $C$ be a smooth $n$-punctured, genus $g$ curve. 
Since the subgroup $K$ is invariant for the action of $\GG_{g,[n]}$, it determines the same finite 
index subgroup of $\pi_1(C)$ for any given marking $S_{g,n}\ra C$. Let then $C_K\ra C$ be the 
corresponding Galois covering.

A marking $\phi\co S_K\sr{\sim}{\ra} C_K$ identifies the group of automorphims 
$\aut(C_K)$ of the curve $C_K$ with a finite subgroup of $\GG(S_K)$. In this way,  
the Galois group of the covering $C_K\ra C$ is identified with some conjugate of $G_K$ in $\GG(S_K)$.

The theory of Riemann surfaces with symmetry (cf. \cite{GD-H}) describes the locus of 
$\cM(S_K)$, parametrizing curves which have a group of automorphisms conjugated to $G_K$  
as an irreducible closed substack $\cM_{G_K}$ of $\cM(S_K)$ with at most normal crossing 
singularities, whose normalization $\cM'_{G_K}$ is a smooth $G_K$-gerbe over the moduli stack 
$\cM_{g,[n]}$. In particular, there is a natural short exact sequence:
$$1\ra G_K\ra\pi_1(\cM'_{G_K})\ra\GG_{g,[n]}\ra 1.$$

A connected and analytically irreducible component of the inverse image of $\cM_{G_K}$, via 
the covering map $T(S_K)\ra\cM(S_K)$, is obtained as the fixed point set $T_{G_K}$ for the 
action of the subgroup $G_K<\GG(S_K)$ on the Teichm\"uller space $T(S_K)$. 

The submanifold $T_{G_K}$ of the Teichm\"uller space $T(S_K)$ is described as the set 
$(D,\phi)$ of Teichm\"uller points of $T(S_K)$ such that the group of automorphisms of the curve 
$D$ contains a subgroup which is topologically conjugated to $G_K$ by means of the 
homeomorphism $\phi\co S_K\sr{\sim}{\ra} D$ (cf. \cite{GD-H}, Theorem~A and B).
There is then a natural isomorphism of complex manifolds 
$T_{G_K}\cong T_{g,n}$. From this description, it follows that there is an isomorphism
$$N_{\GG(S_K)}(G_K)\cong\pi_1(\cM'_{G_K})$$
and then we recover the short exact sequence $(1)$ of Section~\ref{Loo}. 

The short exact sequence $(2)$ of Section~\ref{Loo} and the subsequent remarks have a geometric 
interpretation as well. If the center of $G_K$ is trivial, then the geometric level structure 
$\cM^K\ra\cM_{g,[n]}$ is the universal
trivializing covering for the gerbe $\cM'_{G_K}\ra\cM_{g,[n]}$, i.e. the pull-back of this gerbe along
a morphism $X\ra\cM_{g,[n]}$ is the trivial $G_K$-gerbe over $X$ if and only if this morphism 
factors through the geometric level structure $\cM^K\ra\cM_{g,[n]}$.

In the hypotheses of Theorem~\ref{comparison}, 
the group $G_K$ acts faithfully on the homology group $H_1(\ol{S}_K,\Z/m)$. Therefore, 
independently from the triviality of the center of $G_K$, the pull-back of the gerbe 
$\cM'_{G_K}\ra\cM_{g,[n]}$ along the Looijenga level structure $\cM^{K,(m)}\ra\cM_{g,[n]}$
is the trivial $G_K$-gerbe over $\cM^{K,(m)}$. 

In conclusion, the isomorphism $(3)$ of Section~\ref{Loo} yields a simple geometric interpretation 
of Loojenga level structures. It follows from $(3)$ that the Looijenga level structure $\cM^{K,(m)}$ 
is isomorphic to any of the irreducible components of the pull-back of the abelian level structure
$\cM(S_K)^{(m)}\ra\cM(S_K)$ over the natural morphism $\cM'_{G_K}\ra\cM(S_K)$:

\[ \begin{array}{ccc}
\hspace{1cm}\cM^{K,(m)}\; &\lra &\hspace{0,5cm}\cM(S_K)^{(m)}\\
\;\Big\da  &{\st\square} &\Big\da  \\
\hspace{0,5cm}\cM'_{G_K} &\lra & \cM(S_K).
\end{array} \]

The substack $\cM_{G_K}$ of $\cM(S_K)$ has normal crossing singularities if and only if the following 
occurs. Let $C_K$ and an embedding of $\aut(C_K)$ in $\GG(S_K)$ be defined as above. Then, the 
finite subgroup $\aut(C_K)<\GG(S_K)$ may contain, besides $G_K$, a $\GG(S_K)$-conjugate of 
$G_K$ distinct from $G_K$. If this happens, the stack $\cM_{G_K}$ self-intersects transversally inside 
the moduli stack $\cM(S_K)$, in the point parametrizing $C_K$. 
The situation is different for level structures dominating an abelian level.

\begin{proposition}\label{loogeo}\begin{enumerate}
\item For $m\geq 3$, an irreducible component of the inverse image 
of the closed substack $\cM_{G_K}$ of $\cM(S_K)$, in the abelian level structure $\cM(S_K)^{(m)}$, 
is smooth and isomorphic to the Looijenga level structure $\cM^{K,(m)}$ over $\cM_{g,[n]}$. 
\item For $m=2$, a self-intersection of such irreducible component may occur only in the locus 
parametrizing hyperelliptic curves and, if the group $G_K$ does not contain a hyperelliptic involution, 
its normalization is isomorphic to the Looijenga level structure $\cM^{K,(2)}$ over $\cM_{g,[n]}$.
\end{enumerate}
\end{proposition}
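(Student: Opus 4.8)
The plan is to combine a descent property of the normalization with a local analysis over Teichm\"uller space. Keep the notation of the Cartesian square preceding the statement, write $q\co\cM(S_K)^{(m)}\ra\cM(S_K)$ for the structure morphism of the abelian level structure, and set $\cN:=\cM'_{G_K}\times_{\cM(S_K)}\cM(S_K)^{(m)}$. Since $q$ is representable and \'etale and $\cM'_{G_K}$ is smooth, $\cN$ is smooth, and by the discussion preceding the statement---that is, by the isomorphism $(3)$ of Section~\ref{Loo}---every connected component of $\cN$ is isomorphic to $\cM^{K,(m)}$. The first point is that the formation of the normalization commutes with smooth base change: since $\cM'_{G_K}\ra\cM_{G_K}$ is the normalization morphism and $\cM_{G_K}\hookra\cM(S_K)$ is a reduced closed substack, pulling back along the \'etale morphism $q$ exhibits $\cN\ra q^{-1}(\cM_{G_K})$ as the normalization of the (reduced) closed substack $q^{-1}(\cM_{G_K})$ of $\cM(S_K)^{(m)}$. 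Consequently the irreducible components of $q^{-1}(\cM_{G_K})$ correspond bijectively to those of $\cN$, and the normalization of each of them is isomorphic to $\cM^{K,(m)}$. This already settles the assertion about the normalization in ii.), bearing in mind that $\cM^{K,(2)}$ is defined only when $G_K$ contains no hyperelliptic involution (Remark~\ref{no-hyperelliptic}); and it reduces i.) to showing that, for $m\geq 3$, every irreducible component of $q^{-1}(\cM_{G_K})$ is smooth, equivalently that it coincides with its own normalization.

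To prove smoothness I would pass to Teichm\"uller space. For $m\geq 3$, Serre's lemma (Lemma~2.9, Ch.~XVII, \cite{A-C}) shows that the level subgroup $\GG(S_K)^{(m)}:=\ker(\GG(S_K)\ra\Sp(H_1(\ol S_K,\Z/m)))$ is torsion free, hence acts freely and properly discontinuously on $T(S_K)$, so that $\cM(S_K)^{(m)}=T(S_K)/\GG(S_K)^{(m)}$ is a complex manifold. A point $z\in\cM(S_K)^{(m)}$ lifts to a point $(D,\phi)\in T(S_K)$; a neighbourhood of $z$ is biholomorphic to a ball $B$ about $(D,\phi)$, and under this identification $q^{-1}(\cM_{G_K})\cap B=\bigcup_{G'}(T_{G'}\cap B)$, where $G'$ runs over the subgroups of $A:=\phi^{-1}\aut(D)\phi$ which are $\GG(S_K)$-conjugate to $G_K$ and $T_{G'}\subseteq T(S_K)$ denotes the (smooth) fixed-point locus of $G'$. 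The components of $q^{-1}(\cM_{G_K})$ are indexed by the $\GG(S_K)^{(m)}$-conjugacy classes of such subgroups, and the branches through $z$ of the component attached to a class $[G_0]$ are exactly the $T_{G'}$ with $G'\subseteq A$ and $G'\in[G_0]$. Hence this component is smooth at $z$ as soon as $A$ contains at most one subgroup in each $\GG(S_K)^{(m)}$-conjugacy class of subgroups conjugate to $G_K$.

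This last statement is the crux, and it is the only place where the hypothesis on $m$ is genuinely used. Let $G',G''\subseteq A$ be $\GG(S_K)$-conjugate to $G_K$, with $G''=\gamma G'\gamma^{-1}$ for some $\gamma\in\GG(S_K)^{(m)}$. Through the marking $\phi$ (extended to $\ol S_K\ra\ol D$) the action of $\GG(S_K)$ on $H_1(\ol S_K,\Z/m)$ restricts on $\aut(D)$ to the action on $H_1(\ol D,\Z/m)$, and $\gamma$ acts trivially on it; therefore, for every $g\in G'$, the elements $g$ and $\gamma g\gamma^{-1}$ of $\aut(D)$ induce the same automorphism of $H_1(\ol D,\Z/m)$. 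For $m\geq 3$ the representation $\aut(D)\ra\Sp(H_1(\ol D,\Z/m))$ is faithful---Serre's lemma again, the genus of $\ol D=\ol S_K$ being positive---so $g=\gamma g\gamma^{-1}$ for all $g\in G'$, $\gamma$ centralizes $G'$, and $G'=G''$. This proves i.). For $m=2$ the same computation only gives that $g^{-1}\gamma g\gamma^{-1}$ lies in the kernel of $\aut(D)\ra\Sp(H_1(\ol D,\Z/2))$, which by Serre's lemma is trivial unless $D$ is hyperelliptic (when it is generated by the hyperelliptic involution). Thus, away from the hyperelliptic locus, $A$ still contains at most one subgroup in each relevant $\GG(S_K)^{(2)}$-conjugacy class, so an irreducible component of $q^{-1}(\cM_{G_K})$ can self-intersect only along the locus of hyperelliptic curves; together with the normalization computation this yields ii.).

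The step requiring the most care is the descent of the normalization: one must justify that normalization commutes with \'etale base change for the Deligne--Mumford stacks in play (and that the reduced substack $\cM_{G_K}$ pulls back to a reduced substack), and one must match the connected components of $\cN$ with the copies of $\cM^{K,(m)}$ produced by the isomorphism $(3)$. Geometrically, this is the statement that the normal-crossing self-intersections of $\cM_{G_K}$ in $\cM(S_K)$ are resolved in the abelian level structure not because the crossing sheets disappear, but because they become sheets of distinct irreducible components; Serre's lemma is precisely what forbids two such sheets to lie on one component for $m\geq 3$. By contrast, once the manifold structure of $\cM(S_K)^{(m)}$ for $m\geq 3$ is in hand, the Teichm\"uller-space description and the Serre-lemma computation are routine.
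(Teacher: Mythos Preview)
Your proposal is correct and follows essentially the same route as the paper: both reduce to showing that $\aut(C_K)$ (or your $A$) cannot contain two distinct $\GG(m)$-conjugates of $G_K$, and both dispatch this via Serre's lemma by noting that $\rho_{(m)}(fG_Kf^{-1})=\rho_{(m)}(G_K)$ when $f\in\ker\rho_{(m)}$, whence $fG_Kf^{-1}=G_K$ by faithfulness of $\rho_{(m)}|_{\aut(C_K)}$. The paper's version is more compressed---it compares the two subgroups as sets under $\rho_{(m)}$ rather than element by element, and it leaves the identification of the normalization with $\cM^{K,(m)}$ implicit in the Cartesian square preceding the statement---whereas you spell out the normalization-descent and the Teichm\"uller-space branch analysis more carefully; but the substance is the same.
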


\begin{proof}The image of $T_{G_K}$ in the abelian level structure $\cM(S_K)^{(m)}$ has 
self-intersections in the points parameterizing the curve $C_K$, if and only if its automorphism 
group $\aut(C_K)$ contains two distinct $\GG(m)$-conjugates of $G_K$.

For $m\geq 3$ and, in case $C_K$ is not
hyperelliptic, for $m=2$ as well, the restriction of the natural representation 
$\rho_{(m)}\co\GG(S_K)\ra\Sp(H_1(\ol{S}_K,\Z/m)$ to $\aut(C_K)$ is faithful.

Let then $f$ be an element of the abelian level $\ker\rho_{(m)}$ of $\GG(S_K)$ such that both 
$G_K$ and $fG_Kf^{-1}$ are contained in the finite subgroup $\aut(C_K)$. It holds: 
$$\rho_{(m)}(fG_Kf^{-1})=\rho_{(m)}(f)\rho_{(m)}(G_K)\rho_{(m)}(f^{-1})=\rho_{(m)}(G_K).$$
Therefore, it follows that $fG_Kf^{-1}=G_K$. 

\end{proof}

The most obvious way to compactify a level structure $\cM^{\ld}$ over $\cM_{g,[n]}$ is to take 
the normalization $\ccM^{\ld}$ of $\ccM_{g,[n]}$ in the function field of $\cM^{\ld}$.
This definition can be formulated more functorially in the category of regular log schemes as done
by Mochizuki in \cite{Moch}. It is easy to see that the natural morphism of logarithmic stacks
$(\ccM^{\ld})^{log}\ra\ccM_{g,[n]}^{log}$ is log-\'etale, where $(\_)^{log}$ denotes
the logarithmic structure associated to the respective D--M boundaries 
$\dd\cM^\ld:=\ccM^\ld \ssm\cM^\ld$ and $\dd\cM_{g,[n]} :=\ccM_{g,[n]} \ssm\cM_{g,[n]}$. 
Viceversa, by the log purity Theorem, any finite, connected, log {\'e}tale Galois covering of
$\ccM_{g,[n]}^{log}$ is of the above type.

A basic property of (compactified) level structures is the following:
 
\begin{proposition}\label{Deligne}If a level $\GG^\lambda$ is contained in an abelian level
of order $m$, for some $m\ge 3$, then the level structure $\ccM^{\ld}_{g,n}$ is
represented by a projective variety.
\end{proposition}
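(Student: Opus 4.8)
The plan is to reduce the statement to two facts: (i) the compactified level structure $\ccM^\ld_{g,n}$ is proper over $\C$, and (ii) it is a scheme (in fact a variety) rather than a nontrivial stack, which by the remark of Serre quoted at the end of \S\ref{levels} follows once $\GG^\ld_{g,n}$ is contained in an abelian level of order $m\ge 3$. Properness is immediate: $\ccM^\ld_{g,n}$ is by definition the normalization of the proper D--M stack $\ccM_{g,n}$ in a finite field extension, hence it is finite over $\ccM_{g,n}$ and therefore proper over $\C$. So the real content is that a proper, regular algebraic variety of dimension $3g-3+n$ obtained this way is \emph{projective}.

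For projectivity I would invoke the standard ampleness argument for compactified moduli of curves, adapted along the lines of Looijenga \cite{L1}. First I would note that $\ccM^\ld_{g,n}$ carries a natural semiample line bundle: pull back a suitable power of the Hodge bundle $\lambda$ (equivalently, a power of the line bundle $\kappa$ attached to the log-canonical class of the universal curve) from $\ccM_{g,n}$ along the finite morphism $\ccM^\ld_{g,n}\ra\ccM_{g,n}$. On $\ccM_{g,n}$ this class is big and nef, and its only locus of non-positivity is contracted along the map to the coarse projective variety $\ol{M}_{g,n}$; the relevant contracted curves are the ones lying in the boundary and in the locus of curves with extra automorphisms. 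The point of the hypothesis $\GG^\ld_{g,n}\subseteq\GG(m)$ with $m\ge 3$ is exactly that the covering $\cM^\ld_{g,n}\ra\cM_{g,n}$ separates the relevant automorphisms, so that on $\ccM^\ld_{g,n}$ the obstruction to ampleness inside the interior disappears; what remains is to handle the boundary.

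For the boundary I would argue that, after passing far enough up the tower — using that $\GG^\ld_{g,n}$ contains some abelian level, and that abelian level structures of order $m\ge 3$ have boundary divisors which are themselves (products of) compactified level structures of the same type over moduli of lower-dimensional curves — one can arrange the pulled-back Hodge class to be strictly positive along every boundary stratum by adding a small positive multiple of (a power of) the boundary divisor, and check on each stratum that the resulting class stays nef. This is the induction on $3g-3+n$ that underlies Looijenga's Prym-level construction; the base cases are the level structures over $\cM_{1,1}$ and $\cM_{0,n}$, which are classical. Combining a strictly positive class on the boundary with a big and nef class that is positive in the interior gives an ample class on $\ccM^\ld_{g,n}$, hence projectivity.

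\textbf{Main obstacle.} The delicate point is not properness but producing a genuinely \emph{ample} — not merely big and nef — class; i.e. ruling out curves in $\ccM^\ld_{g,n}$ on which the natural semiample bundle has degree zero. The hypothesis kills such curves in the interior via Serre's lemma, but one must still verify, stratum by stratum along the D--M boundary, that no contracted curve survives; this is where the inductive structure of the boundary of abelian level structures (to be analyzed in \S\ref{Loo} and later in the paper) is used. I would expect the bookkeeping of which boundary strata of $\ccM^\ld_{g,n}$ are themselves representable — and hence amenable to the inductive hypothesis — to be the technically fussy part, though conceptually it is exactly Looijenga's original argument carried out for an arbitrary level dominating $\cM^{(m)}$ rather than for Prym levels specifically.
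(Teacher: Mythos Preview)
Your proposal has the emphasis exactly backwards, and this leads to a genuine gap.

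\textbf{The gap.} You claim that representability of $\ccM^\ld_{g,n}$ as a scheme follows from Serre's lemma as quoted at the end of \S\ref{levels}. But that statement is only about the \emph{open} part $\cM^\ld$: Serre's lemma says that an automorphism of a \emph{smooth} curve acting trivially on $H_1(\,\cdot\,,\Z/m)$ is trivial for $m\ge 3$. The compactification $\ccM^\ld_{g,n}$ parametrizes \emph{stable} curves, and the question of whether an automorphism of a stable curve acting trivially on $H_1(C,\Z/m)$ must be the identity is a separate, nontrivial statement. In fact it fails in the unordered case: Remark~\ref{representability}~i.) notes that $\ccM^{(m)}_{g,[n]}$ is \emph{not} representable for $n\ge 2$ and $m\ge 3$, precisely because a stable curve with a doubly-pointed rational tail carries a nontrivial automorphism that is invisible in homology. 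So one really must use that the markings are ordered, and one must prove the extension of Serre's lemma to stable $n$-pointed curves with ordered labels. That is the actual content of the proposition. The paper does this via the Bers bordification: one identifies the stabilizer $\GG_x$ of a boundary point $x=(C,\phi)\in\dd T_{g,n}$ as an extension of $\aut(C)$ by the free abelian group $I_x$ generated by Dehn twists along the pinched curves, and shows that $\GG^\ld\cap\GG_x$ lies entirely in $I_x$ because its image in $\aut(C)$ acts trivially on $H_1(C,\Z/m)$, hence is trivial (Lemma~4 of \cite{Br}).

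\textbf{The unnecessary part.} Conversely, your elaborate ampleness argument for projectivity is not needed. Once $\ccM^\ld_{g,n}$ is known to be a scheme, it is a normal variety equipped with a \emph{finite} morphism to the coarse moduli space $\ol{M}_{g,n}$, which is projective by Knudsen--Mumford. A variety finite over a projective variety is projective; there is no need to build an ample class by hand, and no induction on $3g-3+n$ is involved. The Hodge-bundle/boundary-perturbation argument you sketch is the kind of thing one needs to prove $\ol{M}_{g,n}$ itself is projective, but here that is input, not something to be reproved at each level.

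In short: the hard step is representability at the boundary (which you treated as done), and the easy step is projectivity (which you treated as the main obstacle).
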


\begin{proof}Even though this result is well known (cf. (ii), Proposition~2 \cite{Br}), its proof is rather 
technical and we prefer to give a sketch from the point of view of Teichm\"uller theory whose ideas 
will be useful later.

For a given stable $n$-pointed, genus $g$ curve $C$, let $\cN$ be its singular set and $\mathcal P$ its
set of marked points. {\it A degenerate marking} $\phi\co S_{g,n}\ra C$ is a continuous 
map such that $C\ssm\Im\,\phi={\mathcal P}$, the inverse image $\phi^{-1}(x)$, for all $x\in\cN$, is a simple 
closed curve (briefly, s.c.c.) on $S_{g,n}$ and the restriction of the marking 
$\phi\co S_{g,n}\ssm\phi^{-1}(\cN)\ra C\ssm(\cN\cup {\mathcal P})$ 
is a homeomorphism. {\it The Bers bordification} $\ol{T}_{g,n}$ of the Teichm\"uller space 
$T_{g,n}$ is the real analytic space which parametrizes pairs $(C,\phi)$, consisting 
of a stable $n$-pointed, genus $g$ curve $C$ and the homotopy class
of a degenerate marking $\phi\co S_{g,n}\ra C$
(cf. \S 3, Ch. II in \cite{Abikoff} for more details on this construction).

The natural action of $\GG_{g,[n]}$ on $T_{g,n}$ extends to $\ol{T}_{g,n}$. However, this action
is not anymore proper and discontinuous, since a boundary stratum has for inertia group the free
abelian group generated by the Dehn twists along the simple closed curves of $S_{g,n}$ which are
collapsed on such boundary stratum.

The geometric quotient $\ol{T}_{g,n}/\GG_{g,[n]}$ identifies with the real-analytic space underlying
the coarse moduli space $\ol{M}_{g,[n]}$ of stable $n$-pointed, genus $g$ curves.
Instead, the quotient stack $[\ol{T}_{g,n}/\GG_{g,[n]}]$ only admits a non-representable natural map 
to the D--M stack $\ccM_{g,[n]}$, because of the extra-inertia at infinity. 

We can assume that the given level $\GG^\ld$ is contained in the pure mapping class group
$\GG_{g,n}$. From the universal property of the normalization, it then follows that the level 
structure $\ccM^{\ld}$ is the relative moduli space of the morphism 
$[\ol{T}_{g,n}/\GG^\ld]\ra\ccM_{g,n}$. In order to prove that $\ccM^{\ld}$ is 
representable, we have to show that for all $x=(C,\phi)\in\dd T_{g,n}$, the stabilizer 
$\GG_x^\ld$ equals its normal subgroup $\GG_x^\ld\cap I_x$, where $I_x$ is the free abelian
subgroup of $\GG_{g,n}$ generated by the Dehn twists along the s.c.c.'s on $S_{g,n}$ which are 
contracted by the map $\phi$.

It is easy to see that $\GG_x/I_x$ is naturally isomorphic to the automorphism group of the
complex stable curve $C$ and that elements in $\GG(m)\cap\GG_x$ project to automorphisms 
acting trivially on $H_1(C,\Z/m)$. Therefore, the claim and then the proposition follows from the 
fact that, for $m\geq 3$, the only such automorphism is the identity  (cf. Lemma~4 \cite{Br}).

\end{proof}

\begin{remarks}\label{representability}\begin{enumerate}
\item It is interesting to notice that, for $n\geq 2$ and $m\geq 3$, 
the abelian level structure $\cM_{g,[n]}^{(m)}$ is representable while its D--M compactification
$\ccM_{g,[n]}^{(m)}$ is not, because stable curves with double pointed rational tails have a 
non-trivial automorphism which acts trivially on the homology. This shows that 
Proposition~\ref{Deligne} formulates a non-trivial property of level structures.

\item For $2g-2+n>0$, let $K$ be a finite index invariant subgroup of $\Pi_{g,n}$ with the properties 
that the quotient $\Pi_{g,n}/K$ surjects on $H_1(S_g,\Z/\ell)$, for some $\ell\geq 3$, and the 
inverse image of any non-peripheral s.c.c. $\gm$ on $S_{g,n}$, via the 
covering map $p_K\co S_K\ra S_{g,n}$, is a union of non-separating curves. Then, it holds 
$\GG^{K,(m)}\leq\GG_{g,n}$ and, by Theorem~\ref{comparison}, it holds as well
$\GG^{K,(m)}\leq\GG(\ell)$, for all $m\geq 2$. Therefore, it follows that the corresponding 
compactified Looijenga level structure $\ccM^{K,(m)}$ dominates $\ccM^{(\ell)}_{g,n}$, for 
$\ell\geq 3$, and then, by Proposition~\ref{Deligne}, is representable.
\end{enumerate}\end{remarks}

Returning to moduli of curves with symmetry, let us consider the Zariski closure 
$\ccM_{G_K}$ of the locus $\cM_{G_K}$ in the D--M compactification $\ccM(S_K)$.
The closed substack $\ccM_{G_K}$ consists of the points $[C]\in\ccM(S_K)$ such that the
automorphisms group of the curve $C$ contains a subgroup conjugated to $G_K$ via a
degenerate marking $\phi\co S_K\ra C$. 

This makes sense because an automorphism of $C$
preserves the sets $\cN$ and ${\mathcal P}$ of nodes and labels of $C$ and is determined by its restriction 
to $C\ssm(\cN\cup{\mathcal P})$, a self-homeomorphism of $S_K$ which preserves the closed submanifold 
$\phi^{-1}(\cN)$ is determined by its restriction to $S_K\ssm\phi^{-1}(\cN)$ and the restriction 
$\phi\co S_K\ssm\phi^{-1}(\cN)\ra C\ssm(\cN\cup{\mathcal P})$ is a homeomorphism.

By means of a degenerate marking $\phi\co S_{g,n}\ra C$, the finite group $\aut(C)$ then pulls back 
to a, not necessarily finite, subgroup $H_\phi$ of the mapping class group $\GG(S_K)$.

The closed substack $\ccM_{G_K}$ of the moduli stack $\ccM(S_K)$ has a self-intersection in the 
point parameterizing the curve $C$, if and only if, for some (and then for all) degenerate marking 
$\phi\co S_{g,n}\ra C$, the associated subgroup $H_\phi<\GG(S_K)$ contains two distinct 
$\GG(S_K)$-conjugates of $G_K$. 

The $G_K$-equivariant universal deformation of a stable curve endowed with a $G_K$-action
is formally smooth (cf. \S 5.1.1 \cite{B-R}). It follows that the stack $\ccM_{G_K}$ has at most normal 
crossing singularities and its normalization $\ccM_{G_K}'$ is a smooth $G_K$-gerbe over the moduli 
stack $\ccM_{g,[n]}$.

Let us describe the inverse image of the closed substack $\ccM_{G_K}$ in the compactified abelian 
level structure $\ccM(S_K)^{(m)}$.

The natural morphism of stacks $\ccM(S_K)^{(m)}\ra\ccM(S_K)$ is log-\'etale with respect to the 
logarithmic structures associated to the repsective D--M boundaries. By base-change via the 
natural morphism $\ccM'_{G_K}\ra\ccM(S_K)$, the morphism 
$\ccM'_{G_K}\times_{\ccM(S_K)}\ccM(S_K)^{(m)}\ra\ccM'_{G_K}$ is also log-\'etale 
with respect to the logarithmic structures associated to the respective D--M boundaries. 
From the log-purity Theorem, it then follows that the
connected components of the fiber product $\ccM'_{G_K}\times_{\ccM(S_K)}\ccM(S_K)^{(m)}$
are normal and hence isomorphic to the level structure $\ccM^{K,(m)}$.

Let us now describe locally the image of the level structure $\ccM^{K,(m)}$ in the compactified abelian 
level structure $\ccM(S_K)^{(m)}$.

\begin{lemma}\label{self-intersection} For $2g-2+n>0$, let $K$ be a proper invariant finite index 
subgroup of $\Pi_{g,n}$. Let $[C]\in\dd\cM_{G_K}$ be such that the natural representation
$\aut(C)\ra\mathrm{GL}(H_1(C,\Z/m))$ is injective for a given $m\geq 2$. Then, an irreducible 
component of the inverse image of $\ccM_{G_K}\subset\ccM(S_K)$ in the abelian level 
structure $\ccM(S_K)^{(m)}$ is normal in a neighborhood of a point parametrizing the curve $C$. 
\end{lemma}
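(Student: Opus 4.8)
The plan is to mimic the argument of Proposition~\ref{loogeo}, but now in the compactified setting where self-intersections of $\ccM_{G_K}$ are controlled via degenerate markings rather than ordinary markings. First I would fix a degenerate marking $\phi\co S_K\ra C$ and recall, from the discussion preceding the statement, that $\aut(C)$ pulls back via $\phi$ to a subgroup $H_\phi\le\GG(S_K)$, and that $\ccM_{G_K}$ self-intersects at $[C]$ precisely when $H_\phi$ contains two distinct $\GG(S_K)$-conjugates of $G_K$. Correspondingly, the inverse image of $\ccM_{G_K}$ in the abelian level structure $\ccM(S_K)^{(m)}$ has, near a point over $[C]$, one local branch for each $\GG(m)$-orbit of such conjugates sitting inside $H_\phi$; two branches belong to the same irreducible component iff the corresponding conjugates are $\GG(m)$-conjugate. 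So the statement reduces to showing that if $f\in\GG(m)=\ker(\GG(S_K)\ra\Sp(H_1(\ol S_K,\Z/m)))$ satisfies $fG_Kf^{-1}\le H_\phi$ and $fG_Kf^{-1}$ is itself (a conjugate of $G_K$ sitting) in $H_\phi$, then already $fG_Kf^{-1}=G_K$ — i.e. there is really only one local branch, hence the component is unibranch and, being normal as established just above the Lemma for each connected component of the fiber product, is normal at that point.

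The key step is therefore the injectivity hypothesis on $\aut(C)\ra\mathrm{GL}(H_1(C,\Z/m))$. I would argue as follows. Both $G_K$ and $fG_Kf^{-1}$ are subgroups of $H_\phi$ whose images in $\aut(C)$ are subgroups, say $A$ and $A'$; since $f\in\GG(m)$, conjugation by $f$ acts trivially on $H_1(\ol S_K,\Z/m)$, and one must transfer this to the statement that the induced automorphism-class on $H_1(C,\Z/m)$ sends $A$ to $A'$ with the same image in $\mathrm{GL}(H_1(C,\Z/m))$. Concretely, for $y\in G_K$ with image $\bar y\in A$, the element $fyf^{-1}$ has image $\bar{y}'\in A'$, and the action of $fyf^{-1}$ on $H_1(\ol S_K,\Z/m)$ equals that of $y$ because $f$ is in the abelian level; descending to the stable curve $C$ (using that the representation factors appropriately, as in the proof of Proposition~\ref{Deligne} where $\GG_x/I_x\cong\aut(C)$ and $\GG(m)\cap\GG_x$ acts trivially on $H_1(C,\Z/m)$), one gets that $\bar y$ and $\bar y'$ have the same image in $\mathrm{GL}(H_1(C,\Z/m))$. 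By the injectivity hypothesis, $\bar y=\bar y'$ for all $y$, so $A=A'$ as subgroups of $\aut(C)$. It then remains to lift this equality back up: since the map $H_\phi\ra\aut(C)$ has kernel contained in the subgroup generated by Dehn twists along $\phi^{-1}(\cN)$ (which is torsion-free) and $G_K$, $fG_Kf^{-1}$ are finite, the finite subgroups $G_K$ and $fG_Kf^{-1}$ of $H_\phi$ map isomorphically to $A=A'$, and hence $fG_Kf^{-1}=G_K$.

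I expect the main obstacle to be exactly this last compatibility: making precise the relation between the $\Sp(H_1(\ol S_K,\Z/m))$-action on the smooth covering surface and the $\mathrm{GL}(H_1(C,\Z/m))$-action on the (singular, possibly disconnected-normalization) stable curve $C$, and checking that $f\in\GG(m)$ on $\ol S_K$ really does force the two branches $G_K$, $fG_Kf^{-1}$ to have equal image in $\aut(C)\hookra\mathrm{GL}(H_1(C,\Z/m))$. This requires care about the specialization of the symplectic representation at the boundary — one should use that the Dehn twists collapsed by $\phi$ lie in the kernel of the relevant homology representation, together with the identification of the stabilizer quotient with $\aut(C)$, both of which are available from the proof of Proposition~\ref{Deligne}. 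Once that bookkeeping is in place, the rest is the same elementary conjugation argument as in Proposition~\ref{loogeo}: faithfulness of a homology representation turns ``$f$ conjugates $G_K$ into something with the same homological image'' into ``$f$ normalizes $G_K$'', so the local branch is unique and the irreducible component, already known to be normal as a connected component of the log-étale fiber product $\ccM'_{G_K}\times_{\ccM(S_K)}\ccM(S_K)^{(m)}$, is normal near $[C]$.
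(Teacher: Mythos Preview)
Your approach is essentially the same as the paper's: work at a boundary point via a degenerate marking, identify the stabilizer $H_\phi=\GG(S_K)_P$ sitting in the short exact sequence $1\ra I_P\ra\GG(S_K)_P\ra\aut(C)\ra 1$ with $I_P$ the free abelian group on the collapsed Dehn twists, and show that two $\GG(m)$-conjugates of $G_K$ inside $\GG(S_K)_P$ must project to the same subgroup of $\aut(C)$ by the injectivity hypothesis. The paper stops there, taking ``same image in $\aut(C)$'' as the no-self-intersection criterion, whereas you go one step further and deduce $G_K=fG_Kf^{-1}$ from the torsion-freeness of $I_P$; this extra step is correct and in fact clarifies why the paper's criterion suffices. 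You are also more explicit than the paper about the one genuinely delicate point --- passing from triviality on $H_1(\ol S_K,\Z/m)$ to triviality on $H_1(C,\Z/m)$ --- and you correctly locate the needed input in the proof of Proposition~\ref{Deligne}.
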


\begin{proof}A connected and analytically irreducible component of the inverse image of 
$\ccM_{G_K}$ in the Bers bordification $\ol{T}(S_K)$ is given by the fixed point set $\ol{T}_{G_K}$ 
for the action of the subgroup $G_K$ of $\GG(S_K)$. Then, the natural isomorphism 
$T_{G_K}\cong T_{g,n}$ extends to an isomorphism of real-analytic spaces 
$\ol{T}_{G_K}\cong\ol{T}_{g,n}$.

A degenerate marking $\phi\co S_K\ra C$, for a point $[C]\in\dd\cM(S_K)$, determines a point 
$P\in\dd T(S_K)$. The stabilizer $\GG(S_K)_P$ of $P$, for the action of the Teichm\"uller group 
$\GG(S_K)$ on the Bers bordification $\ol{T}(S_K)$, is then described by the short exact 
sequence:
$$1\ra I_P\ra\GG(S_K)_P\ra\aut(C)\ra 1,$$
where $I_P$ is the free abelian group generated by the Dehn twists along the s.c.c.'s
on $S_K$ which are contracted by the marking $\phi$. 

A self-intersection of $\ccM_{G_K}$ occurs in the boundary point 
$[C]\in\dd\ccM_{G_K}$, if and only if the subgroup $\GG(S_K)_P$ of $\GG(S_K)$ contains 
two conjugates of $G_K$ which project to distinct subgroups of $\aut(C)$.
Similarly, the image of $\ol{T}_{G_K}$ in the abelian level structure $\ccM(S_K)^{(m)}$ has 
self-intersections in the points parameterizing the curve $C$, if and only if $\GG(S_K)_P$ contains 
two $\GG(m)$-conjugates of $G_K$ which project to distinct subgroups of $\aut(C)$.

If, for the given $m\geq 2$, the natural representation $\aut(C)\ra\mathrm{GL}(H_1(C,\Z/m))$ is 
injective, then two finite subgroups of $\GG(S_K)_P$, which differ by conjugation by an element of 
the abelian level $\GG(m)$, project to the same subgroup of $\aut(C)$. This proves the lemma.

\end{proof}

Let us then show that, under suitable hypotheses, the compactified Looijenga level structure
$\ccM^{K,(m)}$ fits in the commutative diagram:

\[ \begin{array}{ccc}
\hspace{1cm}\ccM^{K,(m)}\; &\hookra &\hspace{0,5cm}\ccM(S_K)^{(m)}\\
\;\Big\da  &{\st\square} &\Big\da  \\
\hspace{0,5cm}\ccM'_{G_K} &\ra & \ccM(S_K).
\end{array} \]

\begin{theorem}\label{normal}For $2g-2+n>0$, let $K$ be a finite index invariant subgroup of 
$\Pi_{g,n}$ satisfying the hypotheses of ii.) Remarks~\ref{representability}. Then, the closed 
substack $\ccM_{G_K}$ of $\ccM(S_K)$ does not meet the hyperelliptic locus
and, for $m\geq 2$, the associated Looijenga level structure $\ccM^{K,(m)}$ over $\ccM_{g,[n]}$ is 
representable and isomorphic to any of the irreducible components of the inverse image of 
$\ccM_{G_K}$ in the abelian level structure $\ccM(S_K)^{(m)}$. 
\end{theorem}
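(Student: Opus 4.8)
The plan is to assemble Theorem~\ref{normal} from the three ingredients already developed: the gerbe description $\ccM'_{G_K}\to\ccM_{g,[n]}$, the log-\'etale base-change argument giving normality of the fiber product $\ccM'_{G_K}\times_{\ccM(S_K)}\ccM(S_K)^{(m)}$, and the local normality criterion of Lemma~\ref{self-intersection}. First I would invoke ii.) of Remarks~\ref{representability}: the hypotheses on $K$ force $\GG^{K,(m)}\le\GG(\ell)$ for some $\ell\ge 3$, so $\ccM^{K,(m)}$ dominates $\ccM^{(\ell)}_{g,n}$ and is therefore representable by Proposition~\ref{Deligne}. The same hypotheses (the preimage of every non-peripheral s.c.c.\ consists of non-separating curves, plus the surjection onto $H_1(S_g,\Z/\ell)$) should be used to rule out hyperelliptic curves in $\ccM_{G_K}$: a hyperelliptic involution on $\ol S_K$ would act as $-1$ on $H_1(\ol S_K,\Z/\ell)$, which is incompatible with $G_K$ acting faithfully there while being normalized in the prescribed way — this is the point where I would lean on Remark~\ref{no-hyperelliptic} and a short homological argument.

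Next I would set up the fiber product square. The morphism $\ccM(S_K)^{(m)}\to\ccM(S_K)$ is log-\'etale for the D--M boundary log structures (stated in the excerpt), and base-change along $\ccM'_{G_K}\to\ccM(S_K)$ preserves this, so $\ccM'_{G_K}\times_{\ccM(S_K)}\ccM(S_K)^{(m)}\to\ccM'_{G_K}$ is log-\'etale; by the log-purity theorem its connected components are normal, and since each dominates $\ccM'_{G_K}$ which is a $G_K$-gerbe over $\ccM_{g,[n]}$, each such component — once we pass to its associated coarse/rigidified object over $\ccM_{g,[n]}$, i.e.\ quotient out the $G_K$ acting trivially on $H_1(\ol S_K,\Z/m)$ via isomorphism $(3)$ of Section~\ref{Loo} — is a normal finite connected Galois covering of $\ccM_{g,[n]}$ with function field that of $\cM^{K,(m)}$, hence isomorphic to $\ccM^{K,(m)}$ by the universal property of normalization. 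This identifies $\ccM^{K,(m)}$ with a connected component of the fiber product, which is what the commutative diagram asserts.

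The remaining task is to show that such a connected component of the fiber product coincides with an irreducible component of the inverse image of $\ccM_{G_K}$ in $\ccM(S_K)^{(m)}$, i.e.\ that passing to the fiber product over the \emph{normalization} $\ccM'_{G_K}$ rather than over $\ccM_{G_K}$ itself does not lose information, equivalently that the inverse image of $\ccM_{G_K}$ in $\ccM(S_K)^{(m)}$ is already normal. Over the open part $\cM(S_K)$ this is Proposition~\ref{loogeo}; the content here is at the boundary. For a boundary point $[C]\in\dd\ccM_{G_K}$, by the hyperelliptic-exclusion established in the first step the automorphism group $\aut(C)$ of the corresponding stable curve contains no hyperelliptic involution, and I would want to conclude that $\aut(C)\to\mathrm{GL}(H_1(C,\Z/m))$ is injective for every $m\ge 2$ — this is the crux, because it is exactly the hypothesis of Lemma~\ref{self-intersection}, which then gives local normality of the irreducible component at $[C]$, hence (together with normality in the smooth locus) normality everywhere, forcing it to agree with $\ccM^{K,(m)}$.

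The main obstacle I anticipate is precisely this injectivity of $\aut(C)\to\mathrm{GL}(H_1(C,\Z/m))$ for stable (possibly very degenerate) curves $C$ lying on $\dd\ccM_{G_K}$, for \emph{all} $m\ge 2$ including $m=2$. Serre's lemma as quoted handles smooth curves of genus $\ge 1$ for $m\ge 3$; for nodal curves one must control automorphisms that permute components or fix a component but act by a root of unity, and for $m=2$ one must exclude an elliptic--hyperelliptic type involution. The way I would handle this is to use the structure of $C$ as a curve \emph{covering} a stable $n$-pointed genus $g$ curve with deck group $G_K$, together with the ``non-separating preimage'' hypothesis on $K$ (which controls the dual graph of $\ol S_K$ and the genera of its components), to show every component of $\ol S_K$ has genus $\ge 1$ and carries enough homology that no nontrivial automorphism can act trivially on $H_1(C,\Z/m)$ — reducing the boundary case to the smooth-component case plus a combinatorial argument on the dual graph. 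This is the technical heart; the rest is formal bookkeeping with log structures and universal properties.
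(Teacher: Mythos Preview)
Your overall architecture matches the paper's: representability from ii.) of Remarks~\ref{representability} plus Proposition~\ref{Deligne}; identification of the connected components of the fiber product with $\ccM^{K,(m)}$ via log-\'etale base change and log purity (this is exactly the discussion preceding Lemma~\ref{self-intersection}); and then Lemma~\ref{self-intersection} together with Proposition~\ref{loogeo} to pass from the fiber product over the normalization $\ccM'_{G_K}$ to the honest inverse image of $\ccM_{G_K}$.

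Where you diverge from the paper is in the two steps you flag as the hard ones, and in both cases you are working much harder than necessary. For the hyperelliptic exclusion, your homological sketch (``a hyperelliptic involution acts as $-1$ on $H_1(\ol S_K,\Z/\ell)$, incompatible with $G_K$ being normalized'') is not convincing as stated: the hyperelliptic involution would be central in $\aut(C)$ and there is no obvious obstruction from the normalizer condition. The paper instead argues directly from representability: the connected components of $\ccM'_{G_K}\times_{\ccM(S_K)}\ccM(S_K)^{(2)}$ have already been identified with $\ccM^{K,(2)}$, which is a \emph{scheme}; but the hyperelliptic locus in $\ccM(S_K)^{(2)}$ carries a nontrivial generic automorphism (the hyperelliptic involution acts trivially on $H_1(\,\cdot\,,\Z/2)$), so the image of a scheme under this finite morphism cannot meet it. Pushing down to $\ccM(S_K)$ gives the claim.

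Likewise, the injectivity of $\aut(C)\to\mathrm{GL}(H_1(C,\Z/m))$ for boundary points $[C]\in\dd\ccM_{G_K}$ --- which you correctly isolate as the crux --- does not need a bespoke dual-graph argument. The paper simply invokes the proof of Proposition~\ref{Deligne}: that proof shows representability of a level dominating some $\ccM^{(\ell)}$, $\ell\ge 3$, is equivalent to the stabilizer $\GG^\ld_x$ reducing to its toric part $\GG^\ld_x\cap I_x$ at every boundary point, which is to say that no nontrivial automorphism of the stable curve survives in the level. Since $\ccM^{K,(m)}$ is representable for all $m\ge 2$ under the standing hypotheses, and since the hyperelliptic exclusion handles the residual $m=2$ ambiguity, the injectivity drops out. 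In short: you set up the right reduction to Lemma~\ref{self-intersection}, but the hypotheses of that lemma are verified by \emph{reusing} representability rather than by a fresh structural analysis of $C$.
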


\begin{proof}By ii.) Remarks~\ref{representability}, the level structure $\ccM^{K,(m)}$ is representable 
for all $m\geq 2$. In particular, the image of the finite morphism 
$\ccM'_{G_K}\times_{\ccM(S_K)}\ccM(S_K)^{(2)}\ra\ccM(S_K)^{(2)}$ does not meet 
the hyperelliptic locus of $\ccM(S_K)^{(2)}$, which has a generic non-trivial automorphism.
But then $\ccM_{G_K}$ does not meet the hyperelliptic locus of $\ccM(S_K)$ either.

If the hypotheses of ii.) Remarks~\ref{representability}, are satisfied, from the proof of
Proposition~\ref{Deligne}, it follows that, for all $m\geq 2$ and all points
$[C]\in\dd\ccM_{G_K}\subset \ccM(S_K)$, the natural representation 
$\aut(C)\ra\mathrm{GL}(H_1(C,\Z/m))$ is injective. Therefore, by Lemma~\ref{self-intersection}
and Proposition~\ref{loogeo}, for $m\geq 2$, the irreducible components of the inverse 
image of the closed substack $\ccM_{G_K}$ of $\ccM(S_K)$ in the abelian level structure 
$\ccM(S_K)^{(m)}$ have no self-intersections and so they are isomorphic to $\ccM^{K,(m)}$. 

\end{proof} 

\begin{remark}\label{smoothness criterion}
From the above description of a Looijenga level structure $\ccM^{K,(m)}$, for $m\geq 2$, we can
derive a simple smoothness criterion:\,
if the inverse image of the closed substack $\ccM_{G_K}$ of $\ccM(S_K)$ in the abelian level 
structure $\ccM(S_K)^{(m)}$ avoids its singular locus, then the compactified 
Looijenga level structure  $\ccM^{K,(m)}$ is smooth. This easily follows from the fact that the closed 
substack $\ccM_{G_K}$ meets transversally the branch locus of the covering
$\ccM(S_K)^{(m)}\ra\ccM(S_K)$, which is contained in the D--M boundary of $\ccM(S_K)$.
\end{remark}
   
There is a very elementary and effective method to describe the compactifications $\ccM^\ld$, 
locally in the analytic topology. The natural morphism $\ccM^\ld\ra\ccM_{g,[n]}$ is \'etale
outside the D--M boundary. Therefore, we need just to consider the case of a point  $x\in\dd\cM^\ld$.

Let us observe first that, since the natural morphism of stacks $\ccM_{g,n}\ra\ccM_{g,[n]}$
is \'etale, it is not restrictive to consider only level structures $\ccM^\ld$ over $\ccM_{g,n}$.

Let then $\cB\ra\ccM_{g,n}$ be an analytic neighborhood of the image $y$ of $x$ in $\ccM_{g,n}$ 
such that:
\begin{itemize} 
\item local coordinates $z_{\sst 1},...,z_{\sst 3g-3+n}$ embeds $\cB$ in $\C^{3g-3+n}$ 
as an open ball;
\item the curve $C:=\pi^{-1}(y)$ is maximally degenerate inside the pull-back $\cC\stackrel{\pi}{\ra}\cB$
of the universal family over $\cB$; 
\item  an {\'e}tale groupoid representing $\ccM_{g,n}$ trivializes over $\cB$ to 
$\mbox{Aut}(C)\times\cB\rightrightarrows\cB$. 
\end{itemize}
Let $\{Q_1,\dots,Q_s\}$ be the set of singular points of $C$ and let
$z_i$, for $i=1,\dots ,s$, parametrize curves where the singularity $Q_i$
subsists. The discriminant locus $\dd\cB\subset\cB$ of $\pi$
has then equation $z_{\sst 1}\cdots z_s=0$. Let $U=\cB\ssm\dd\cB$ and suppose that $a\in U$. 
The natural morphism $U\ra\cM_{g,n}$ induces a homomorphism of fundamental groups:
$$\psi_U\co\pi_1(U,a)\ra\pi_1(\cM_{g,n},a)\equiv\GG_{g,n}.$$ 
A connected component $U^{\ld}$ of $U\times_{\ccM_{g,n}}\ccM^{\ld}$ is then
determined by the subgroup $\psi_U^{-1}(\GG^\ld)$ of the group $\pi_1(U,a)$.
    
The neighborhood $U$ is homotopic to the $s$--dimensional torus $(S^1)^s$. 
Therefore, the fundamental group $\pi_1(U,a)$ is abelian and freely generated by simple loops 
$\gamma_i$ around the divisors $z_i$, for $i=1,\dots,s$. 

Since we have fixed a homeomorphism $S_{g,n}\sr{\sim}{\ra}\cC_a$, the loops $\gm_i$, 
for $i=1,\dots,s$, can be lifted to disjoint, not mutually homotopic loops $\tilde{\gamma}_i$ 
on $S_{g,n}$, which do not bound a disc with less than $2$ punctures. The loop 
$\gamma_i$ is mapped by $\psi_U$ exactly in the Dehn twist $\tau_{\tilde{\gamma}_i}$ of 
$\GG_{g,n}$, for $i=1,\dots,s$. In particular, the representation $\psi_U$ is faithful. 

Let $E_{\Sigma(C)}$ be the free abelian group generated by the edges of the dual graph $\Sigma(C)$ 
of the stable curve $C$. The edges of the dual graph correspond to the isotopy classes of the s.c.c. 
$\gm_e$ in $S_{g,n}$ which become isotrivial specializing to $C$. Let us then identify the group 
$E_{\Sigma(C)}$ both with the free abelian group generated in $\GG_{g,n}$ by the set of Dehn twists 
$\{\tau_{\gm_e}\}$ and with the fundamental group $\pi_1(U,a)$.

For a given level $\GG^\ld$, let us denote by $\psi_U^\ld$ the composition of $\psi_U$ with the
natural epimorphism $\GG_{g,n}\ra\GG_{g,n}/\GG^\ld$. Hence, the kernel of $\psi_U^\ld$
is identified with $E_{\Sigma(C)}\cap\GG^\ld$, which we call the local monodromy kernel in $U$
of the level $\GG^\ld$.

The local monodromy kernels for the abelian levels $\GG(m)$ of $\GG_{g,n}$ are easy to compute.
Let $N_{\Sigma(C)}$ and $S_{\Sigma(C)}$ be respectively the subgroups of 
$E_{\Sigma(C)}$ generated by edges corresponding to  non-separating s.c.c. and by edges 
corresponding to separating s.c.c.. 
Let instead $P_{\Sigma(C)}$ be the subgroup generated  by elements of the form $e_1-e_2$, where 
$\{e_1,e_2\}$ corresponds to a cut pair on $C$. 

\begin{theorem}\label{monodromy coefficients}With the above notations, 
the kernel of $\psi_U^{(m)}$ is given by:
$$m\, N_{\Sg(C)}+P_{\Sigma(C)}+ S_{\Sigma(C)}.$$
Therefore, the singular locus of the abelian level structure $\ccM_{g,[n]}^{(m)}$, 
for $m\geq 2$, is contained in the strata parametrized by cut pairs on $S_{g,n}$.
\end{theorem}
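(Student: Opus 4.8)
The plan is to compute the local monodromy kernel $E_{\Sigma(C)}\cap\GG(m)$ directly, by understanding the image of each generating Dehn twist $\tau_{\gm_e}$ of $E_{\Sigma(C)}$ under the symplectic representation $\rho_{(m)}\co\GG_{g,n}\ra\Sp(H_1(S_g,\Z/m))$. Recall the Picard--Lefschetz formula: if $\gm$ is a s.c.c.\ on $S_{g,n}$ with homology class $[\gm]\in H_1(S_g,\Z)$, then $\tau_\gm$ acts on $H_1(S_g,\Z)$ by $x\mapsto x+\langle x,[\gm]\rangle [\gm]$. So I would organize the edges of $\Sigma(C)$ by the homology class of the corresponding curve: a separating s.c.c.\ has $[\gm_e]=0$, hence $\tau_{\gm_e}$ already lies in $\GG(m)$ for every $m$ — this accounts for $S_{\Sigma(C)}$; a non-separating s.c.c.\ has $[\gm_e]\ne 0$ primitive, so $\tau_{\gm_e}^k\in\GG(m)$ iff $k[\gm_e]\equiv 0$ in $H_1(S_g,\Z/m)$, i.e.\ iff $m\mid k$ — this accounts for the $m\,N_{\Sigma(C)}$ summand; and for a cut pair $\{e_1,e_2\}$ the two curves are homologous, $[\gm_{e_1}]=\pm[\gm_{e_2}]$, so after adjusting orientations $\tau_{\gm_{e_1}}\tau_{\gm_{e_2}}^{-1}$ acts trivially on all of $H_1(S_g,\Z)$ — this gives $P_{\Sigma(C)}$. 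Thus $m\,N_{\Sigma(C)}+P_{\Sigma(C)}+S_{\Sigma(C)}$ is certainly contained in $\ker\psi_U^{(m)}$.

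For the reverse inclusion I would argue as follows. An element of $E_{\Sigma(C)}$ is $v=\sum_e k_e\,\tau_{\gm_e}$ (written additively), and $\rho_{(m)}(v)$ depends only on the images $\ol{[\gm_e]}\in H_1(S_g,\Z/m)$ together with the cross-terms, which vanish because the curves $\gm_e$ are pairwise disjoint, so the twists commute and $\rho_{(m)}$ restricted to $E_{\Sigma(C)}$ is the sum of transvections $x\mapsto x+\sum_e k_e\langle x,\ol{[\gm_e]}\rangle\ol{[\gm_e]}$. Modulo $S_{\Sigma(C)}$ (the separating edges) and modulo $P_{\Sigma(C)}$ (which lets me replace all edges in a cut-pair class by a single representative with multiplicity the sum of the coefficients) I may assume $v=\sum_i k_i\,\tau_{\gm_i}$ where the $\gm_i$ are non-separating with homology classes $\ol{v_i}$ that are pairwise \emph{non-homologous} mod $m$. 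I then need: such a $v$ lies in $\ker\rho_{(m)}$ only if every $k_i\equiv 0\pmod m$. This is the linear-algebra heart of the argument: the transvection data $(\ol{v_i},k_i)$ determines and is determined by the operator $\sum_i k_i\langle -,\ol{v_i}\rangle\ol{v_i}$ on the free $\Z/m$-module $H_1(S_g,\Z/m)$, provided the $\ol{v_i}$ are ``independent enough''. I would make this precise by pairing against suitable test classes: for each $i$ choose $w_i\in H_1(S_g,\Z)$ with $\langle w_i,v_j\rangle=\delta_{ij}$ (possible because disjoint non-separating curves that are non-homologous over $\Z$ extend to a symplectic-like system — one can complete $\{v_i\}$ to part of a symplectic basis), and evaluate $\rho_{(m)}(v)(\ol{w_i})-\ol{w_i}=k_i\ol{v_i}$, forcing $k_i\ol{v_i}=0$ in $H_1(S_g,\Z/m)$, hence $m\mid k_i$ since $v_i$ is primitive.

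The main obstacle I anticipate is precisely the reduction ``modulo $P_{\Sigma(C)}$ and $S_{\Sigma(C)}$ the surviving non-separating classes can be taken pairwise non-homologous over $\Z$ and primitive, and a $\Z$-basis of the span can be completed compatibly with the symplectic form mod $m$'' — i.e.\ controlling the combinatorics of the dual graph $\Sigma(C)$ in relation to the homology of $S_g$. One must check that two disjoint non-separating s.c.c.'s on $S_{g,n}$ which are homologous over $\Z$ necessarily form a cut pair (so that being non-homologous over $\Z$ is exactly the complement of the cut-pair and separating cases), and that homology classes which become equal only after reduction mod $m$ do not produce extra relations — this is where the test-class computation above does the work, and where a small amount of care with torsion in $H_1(S_g,\Z/m)$ (there is none, it is free over $\Z/m$) is needed. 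Once the kernel is identified as $m\,N_{\Sigma(C)}+P_{\Sigma(C)}+S_{\Sigma(C)}$, the final assertion is immediate: the abelian level structure $\ccM^{(m)}_{g,[n]}$ fails to be étale over $\ccM_{g,[n]}$ at $[C]$ exactly where $\psi_U^{(m)}$ is non-injective on $E_{\Sigma(C)}$, and by the formula this happens only when $P_{\Sigma(C)}\ne 0$, i.e.\ when $C$ admits a cut pair; hence the singular locus is contained in the boundary strata parametrized by cut pairs on $S_{g,n}$.
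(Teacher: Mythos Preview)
Your overall strategy---compute via Picard--Lefschetz, then reduce modulo $S_{\Sigma(C)}$ and $P_{\Sigma(C)}$ and test against suitable homology classes---matches the paper's. The easy inclusion $m\,N_{\Sigma(C)}+P_{\Sigma(C)}+S_{\Sigma(C)}\subset\ker\psi_U^{(m)}$ is fine.

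The gap is in your reverse inclusion, precisely at the step you flagged as the ``main obstacle'' but then resolved incorrectly. You claim that once the surviving curves $\gm_i$ are non-separating and pairwise non-homologous, one can choose $w_i$ with $\langle w_i,v_j\rangle=\delta_{ij}$, i.e.\ complete $\{v_i\}$ to part of a symplectic basis. This is false: pairwise non-homologous does \emph{not} imply linearly independent. On a genus~$2$ surface take a pants decomposition by three disjoint non-separating curves $\gm_1,\gm_2,\gm_3$; no two form a cut pair (removing any two leaves the surface connected), yet $[\gm_1]+[\gm_2]+[\gm_3]=0$ in $H_1(S_2,\Z)$ since they bound a pair of pants. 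Your dual classes $w_i$ cannot exist here, and the argument collapses. In general the classes $v_i$ span only an isotropic subspace of rank $\leq g$, while there can be up to $3g-3$ of them.

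The paper avoids this by testing \emph{two at a time}: for each pair $\gm_i,\gm_j$ it produces a \emph{geometric} test curve $\gm$ on $S_{g,n}$, disjoint from the remaining curves of $\sg$, meeting each of $\gm_i$ and $\gm_j$ transversally in a single point. Applying the product of twists to $\ol\gm$ yields $h_i\ol{\gm_i}+h_j\ol{\gm_j}=0$ in $H_1(S_g,\Z/m)$, and now only \emph{pairwise} linear independence of $\ol{\gm_i},\ol{\gm_j}$ is needed to conclude $h_i=h_j=0$---and pairwise independence does follow from ``not a cut pair'' since two primitive classes that are $\Z$-proportional must coincide up to sign. The construction of $\gm$ is a short topological argument (find an arc in the complement of the other curves joining the two sides of $\gm_i$ through $\gm_j$), but it is exactly what replaces your unavailable global dual system.
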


\begin{proof}Let $\sg:=\{\gm_0,\ldots,\gm_k\}$ be a set of disjoint, non-mutually homotopic 
non-peripheral s.c.c.'s on $S_{g,n}$ and let us prove that 
$\tau_{\gm_0}^{h_0}\cdot\ldots\cdot\tau_{\gm_k}^{h_k}\in\GG(m)$ if and only if it maps
to the identity in the quotient of the free abelian group $E$ generated by the set of 
Dehn twists $\{\tau_\gm\}_{\gm\in\sg}$ by its subgroup $N+P+S$ generated by $m$-th powers of
Dehn twists $\tau_\gm$, for $\gm\in\sg$, bounding pair maps $\tau_\gm\tau_{\gm'}^{-1}$, for 
$\gm,\gm'\in\sg$ a cut pair, and Dehn twists $\tau_\gm$, for $\gm\in\sg$ a separating s.c.c..

Choosing an adapted system of generators for the homology of the compact Riemann surface 
$S_g$, it is easy to see that $N+P+S<\GG(m)$. So let us assume that 
$\tau_{\gm_0}^{h_0}\cdot\ldots\cdot\tau_{\gm_k}^{h_k}\in\GG(m)$ and let us show that this element
maps to the identity in the quotient $E/(N+P+S)$.

Replacing $\tau_{\gm_0}^{h_0}\cdot\ldots\cdot\tau_{\gm_k}^{h_k}$ by an element which is 
congruent modulo $N+P+S$, we can assume that the set $\sg$ does not contain cut pairs or
separating s.c.c.'s and that it holds $0\leq h_i<m$, for all $i=0,\ldots,k$.

From a simple topological argument, it then follows that, for two given distinct s.c.c.'s $\gm_i$ and 
$\gm_j$ in the set $\sg$, there is a non-separating s.c.c.
$\gm$ on $S_{g,n}$ which is disjoint from the other s.c.c.'s in $\sg$ and intersects $\gm_i$ and 
$\gm_j$ transversally in one point. For a given s.c.c. $\alpha$ on $S_{g,n}$, let us denote 
by $\ol{\alpha}\in H_1(S_g,\Z/m)$ the cycle associated to a given orientation of $\alpha$. 
For suitable orientations of $\gm$, $\gm_i$ and $\gm_j$, it holds:
$$\tau_{\gm_0}^{h_0}\cdot\ldots\cdot\tau_{\gm_k}^{h_k}(\ol{\gm})=\ol{\gm}+\ol{\gm_i}^{h_i}+
\ol{\gm_j}^{h_j}\in H_1(S_g,\Z/m).$$
Since the cycles $\ol{\gm_i}$ and $\ol{\gm_j}$ are primitive and linearly independent in the
homology group $H_1(S_g,\Z/m)$, the above identity implies that $h_i=h_j=0$.

\end{proof}

Let $K$ be a finite index invariant subgroup of $\Pi_{g,n}$, satisfying the hypotheses 
of ii.) Remarks~\ref{representability}, and let $\ccM^{K,(m)}$ be the associated Looijenga level 
structure over $\ccM_{g,[n]}$, which then is representable and dominates the abelian level 
$\ccM_{g,n}^{(\ell)}$, for some $\ell\geq 3$. 

Let us denote by $\psi\co\ccM(S_K)^{(m)}\ra\ccM(S_K)$ the natural finite morphism. 
In Theorem~\ref{normal}, for $m\geq 2$, we identified the level structure $\ccM^{K,(m)}$ 
with an irreducible component of the inverse image $\psi^{-1}(\ccM_{G_K})$, 
which is a normal and proper subvariety of $\ccM(S_K)^{(m)}$.

It is easy to determine the boundary strata of $\ccM(S_K)^{(m)}$ which are met by the inverse 
image $\psi^{-1}(\ccM_{G_K})$, in terms of the covering $p_K\co S_K\ra S_{g,n}$ and systems of
s.c.c.'s on $S_{g,n}$. 

\begin{definition}\label{admissible}
We say that a set $\sg=\{\gm_0,\ldots,\gm_k\}$ of disjoint, non-mutually homotopic 
non-peripheral s.c.c.'s on $S_{g,n}$ is {\it admissible}. Let us also denote by the same letter 
$\sg$ their union in $S_{g,n}$. 
\end{definition}

For an admissible set $\sg$ of s.c.c.'s on $S_{g,n}$,
the inverse image $p_K^{-1}(\sg)$ is also an admissible set of s.c.c.'s on $S_K$ and so 
determines a closed boundary stratum $B_{p_K^{-1}(\sg)}^{(m)}$ of $\ccM(S_K)^{(m)}$ 
(cf. Section~\ref{simplicial} for details on this correspondence). It is clear that:
$$\psi^{-1}(\ccM_{G_K})\cap\dd\ccM(S_K)^{(m)}\subset
\bigcup_{\sg\subset S_{g,n}} B_{p_K^{-1}(\sg)}^{(m)}.$$

Therefore, from Remark~\ref{smoothness criterion} and Theorem~\ref{monodromy coefficients}, 
it follows:

\begin{theorem}\label{smoothness criterion II}For $2g-2+n>0$, let $K$ be a finite index invariant 
subgroup of $\Pi_{g,n}$ such that the covering $p_K\co S_K\ra S_{g,n}$ has the property that, for all 
admissible sets $\sg$ of s.c.c.'s, the inverse image $p_K^{-1}(\sg)$ does not contain cut pairs. 
Then, the compact Looijenga level structure $\ccM^{K,(m)}$ over $\ccM_{g,[n]}$ is smooth for all 
$m\geq 2$.
\end{theorem}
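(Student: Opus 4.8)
\textbf{Proof proposal for Theorem~\ref{smoothness criterion II}.}

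The plan is to combine the smoothness criterion of Remark~\ref{smoothness criterion} with the explicit computation of local monodromy kernels in Theorem~\ref{monodromy coefficients}, applied to the covering surface $S_K$ rather than to $S_{g,n}$. First I would invoke Remark~\ref{smoothness criterion}: it suffices to show that the inverse image $\psi^{-1}(\ccM_{G_K})$ of the closed substack $\ccM_{G_K}\subset\ccM(S_K)$ in the abelian level structure $\ccM(S_K)^{(m)}$ avoids the singular locus of $\ccM(S_K)^{(m)}$, because $\ccM^{K,(m)}$ is isomorphic to an irreducible component of that inverse image (Theorem~\ref{normal}, using that $K$ satisfies the hypotheses of ii.) Remarks~\ref{representability}), and the branch locus of $\ccM(S_K)^{(m)}\ra\ccM(S_K)$ is contained in the D--M boundary, which $\ccM_{G_K}$ meets transversally.

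Next I would apply Theorem~\ref{monodromy coefficients} to the surface $S_K$: the singular locus of $\ccM(S_K)^{(m)}$ is contained in the union of the boundary strata parametrized by cut pairs on $S_K$. Thus the claim reduces to showing that $\psi^{-1}(\ccM_{G_K})\cap\dd\ccM(S_K)^{(m)}$ contains no point lying on a cut-pair stratum of $\ccM(S_K)^{(m)}$. By the displayed inclusion just before the theorem, any boundary point of $\psi^{-1}(\ccM_{G_K})$ lies in some stratum $B_{p_K^{-1}(\sg)}^{(m)}$ with $\sg$ an admissible set of s.c.c.'s on $S_{g,n}$; here one uses that $\ol{T}_{G_K}\cong\ol{T}_{g,n}$ identifies the boundary of the fixed-point locus with the Bers boundary of Teichm\"uller space of $S_{g,n}$, so the collapsed curves on $S_K$ are exactly the components of $p_K^{-1}(\sg)$ for $\sg$ an admissible set on $S_{g,n}$. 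By the hypothesis on $p_K$, no such $p_K^{-1}(\sg)$ contains a cut pair, hence the stratum $B_{p_K^{-1}(\sg)}^{(m)}$ is not a cut-pair stratum and, by Theorem~\ref{monodromy coefficients}, avoids the singular locus of $\ccM(S_K)^{(m)}$. Therefore $\psi^{-1}(\ccM_{G_K})$ avoids $\mathrm{Sing}\,\ccM(S_K)^{(m)}$, and Remark~\ref{smoothness criterion} gives the smoothness of $\ccM^{K,(m)}$ for all $m\geq 2$.

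The main obstacle I anticipate is the bookkeeping that identifies the boundary strata of $\ccM(S_K)^{(m)}$ met by $\psi^{-1}(\ccM_{G_K})$ precisely with those of the form $B_{p_K^{-1}(\sg)}^{(m)}$, $\sg$ admissible on $S_{g,n}$ --- that is, making rigorous the correspondence between degenerate markings of $S_K$ compatible with the $G_K$-action and degenerate markings of $S_{g,n}$. One has to check that a $G_K$-invariant multicurve on $S_K$ is $G_K$-equivariantly isotopic to $p_K^{-1}(\sg)$ for a unique admissible $\sg$ downstairs, which follows from the isomorphism $\ol{T}_{G_K}\cong\ol{T}_{g,n}$ of real-analytic spaces together with the compatibility of Dehn-twist inertia groups under $p_K$; this is essentially contained in the discussion preceding the theorem but deserves an explicit reference to Section~\ref{simplicial}. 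The remaining steps --- the reduction via Remark~\ref{smoothness criterion}, and the appeal to Theorem~\ref{monodromy coefficients} for the location of the singular locus --- are then formal.
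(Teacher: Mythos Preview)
Your proposal is correct and follows essentially the same approach as the paper: the paper's proof is just the sentence ``Therefore, from Remark~\ref{smoothness criterion} and Theorem~\ref{monodromy coefficients}, it follows,'' together with the displayed inclusion $\psi^{-1}(\ccM_{G_K})\cap\dd\ccM(S_K)^{(m)}\subset\bigcup_{\sg} B_{p_K^{-1}(\sg)}^{(m)}$ immediately preceding the statement, and you have spelled out exactly these ingredients. One small remark: you need not invoke Theorem~\ref{normal} or the hypotheses of ii.)~Remarks~\ref{representability} to justify Remark~\ref{smoothness criterion}, since that criterion rests only on the identification of $\ccM^{K,(m)}$ with a connected component of $\ccM'_{G_K}\times_{\ccM(S_K)}\ccM(S_K)^{(m)}$, established earlier via log-purity.
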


It is now possible both to clarify and improve substantially the main result of \cite{L1}.
The only technical result needed from \cite{L1} is Proposition~2, which states that, for $g\geq 2$ 
and $K=\Pi_g^2$, the covering $p_K\co S_K\ra S_g$ 
is such that, for any admissible set $\sg$ of s.c.c.'s on $S_{g}$, the inverse image 
$p_K^{-1}(\sg)$ does not contain cut pairs. It is not difficult to extend the argument to
invariant subgroups of $\Pi_{g,n}$ satisfying more general hypotheses: 

\begin{lemma}\label{no cut-pairs}For $2g-2+n>0$, let $K$ be a finite index invariant subgroup of 
$\Pi_{g,n}$ such that the covering map $p_K\co S_K\ra S_{g,n}$ ramifies non-trivially
over all punctures of $S_{g,n}$ and let $K_\ell:=[K,K]K^\ell$, for an integer $\ell\geq 2$.
This is also a finite index invariant subgroup of $\Pi_{g,n}$ and
the associated covering map $p_{K_\ell}\co S_{K_\ell}\ra S_{g,n}$ is such that, for an admissible 
set $\sg$ of s.c.c.'s on $S_{g,n}$,  the inverse image $p_{K_\ell}^{-1}(\sg)$ 
contains no separating curves or cut pairs. 
\end{lemma}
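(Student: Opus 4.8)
\textbf{Proof proposal for Lemma~\ref{no cut-pairs}.}

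The plan is to reduce the statement for $K_\ell=[K,K]K^\ell$ to a homological computation in the covering $S_{K_\ell}\to S_{g,n}$, exploiting that the deck group of $S_{K_\ell}\to S_K$ is the elementary abelian-by-nilpotent quotient $K/K_\ell$, which surjects onto $H_1(S_K,\Z/\ell)$ once we have filled in the punctures. First I would set up notation: write $q\co S_{K_\ell}\to S_K$ for the intermediate covering, so $p_{K_\ell}=p_K\circ q$, and recall that since $p_K$ ramifies non-trivially over every puncture, the surface $\ol S_K$ obtained by filling in punctures has genus $\ge 1$ (Hurwitz, as in \S\ref{Loo}), hence $H_1(\ol S_K,\Z/\ell)$ is nonzero and the covering $q$ restricted over $\ol S_K$ is at least the $\ell$-th homology covering. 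The key point is that a separating s.c.c.\ or a cut pair on $S_{K_\ell}$ maps to a \emph{null-homologous} $1$-cycle in $\ol S_{K_\ell}$ — a separating curve bounds, and the difference of the two curves of a cut pair bounds — so it suffices to show that for every admissible $\sg$ on $S_{g,n}$, no component of $p_{K_\ell}^{-1}(\sg)$, and no difference of two such components forming a cut pair, is null-homologous in $\ol S_{K_\ell}$.

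The main step is therefore: for a non-peripheral s.c.c.\ $\gm$ on $S_{g,n}$ and a component $\tilde\gm$ of $p_K^{-1}(\gm)$ on $S_K$, control the homology classes in $\ol S_{K_\ell}$ of the components of $q^{-1}(\tilde\gm)$. Here I would distinguish the two cases according to whether $\tilde\gm$ is separating or non-separating on $\ol S_K$. If $\tilde\gm$ is non-separating on $\ol S_K$, then its class $\ol{\tilde\gm}\in H_1(\ol S_K,\Z/\ell)$ is nonzero and primitive, so in the $\Z/\ell$-homology covering it does not lift to a closed loop (it lifts to a path whose endpoints differ by the deck transformation dual to $\ol{\tilde\gm}$); consequently each component of $q^{-1}(\tilde\gm)$ is a non-trivial power of $\tilde\gm$ and is non-separating upstairs — one reads off from the Gysin/transfer description that its class in $H_1(\ol S_{K_\ell},\Z/\ell)$ is nonzero, and likewise the difference of two distinct components is nonzero because they are translates under distinct deck elements of a primitive class. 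If $\tilde\gm$ \emph{is} separating on $\ol S_K$, write $\ol S_K=A\cup_{\tilde\gm} B$; here I would use the hypothesis that $\sg$ is admissible (so $\gm$ is non-peripheral and in particular non-null-homotopic) together with the fact that $p_K$ ramifies over the punctures, to argue that at least one of $A,B$ carries a puncture of $S_K$ that ramifies — hence has genus contributing to $H_1(\ol S_K)$ in a way that makes the restriction of the abelianization $K/K_\ell\tura H_1(\ol S_K,\Z/\ell)$ non-trivial on a loop meeting $\tilde\gm$ once, so again $q^{-1}(\tilde\gm)$ has non-separating, non-null-homologous components. Assembling these, no component of $p_{K_\ell}^{-1}(\sg)$ is separating, and no two components form a cut pair, since a cut pair would force the difference of two components to be null-homologous.

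I expect the genuine obstacle to be the separating-on-$\ol S_K$ subcase: one must rule out the possibility that $\tilde\gm$ separates $\ol S_K$ into a piece that the abelian covering $q$ ``does not see'', i.e.\ that the relevant homology class is already trivial mod $\ell$. The clean way around it is to invoke Looijenga's Proposition~2 of \cite{L1} as the base case (the lemma is explicitly framed as extending that argument), applying his analysis of the mod-$2$ homology covering of a closed surface to the closed surface $\ol S_K$ with its induced punctured structure, and then noting that passing from $\ell=2$ to general $\ell\ge 2$ only enlarges the covering, hence can only destroy separating curves and cut pairs, never create them — so the conclusion for $K_\ell$ with $\ell\ge 2$ follows from the case $\ell=2$, which in turn follows from \cite{L1} applied to each connected intermediate surface $\ol S_K$. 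A secondary technical point, to be handled with care but not expected to be hard, is the bookkeeping that an admissible set $\sg$ pulls back to an admissible set $p_{K_\ell}^{-1}(\sg)$ (disjointness and non-peripherality are preserved, and non-mutual-homotopy follows since homotopic components would descend to homotopic or equal curves in $\sg$), which is needed merely so that the phrase ``cut pair'' makes sense upstairs.
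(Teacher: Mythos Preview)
Your proposal has several genuine gaps.

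First, the separating-on-$\ol S_K$ subcase does not go through as written. If $\tilde\gm$ separates $\ol S_K$ then \emph{no} closed curve meets $\tilde\gm$ with algebraic (or geometric) intersection number one, so the sentence about ``a loop meeting $\tilde\gm$ once'' is vacuous and the argument collapses. Second, your reduction to $\ell=2$ is not valid: for odd $\ell$ the subgroups $K_\ell$ and $K_2$ are not comparable, so there is no covering $S_{K_\ell}\to S_{K_2}$ to invoke; and even when $2\mid\ell$, the monotonicity claim ``a further abelian covering can only destroy separating curves and cut pairs, never create them'' is false in general --- a non-separating curve can very well lift to a separating one in a further cover --- so it needs its own proof, which is essentially the content of the lemma. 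Third, you only analyse lifts of a \emph{single} curve $\gm\in\sg$, but a cut pair in $p_{K_\ell}^{-1}(\sg)$ might consist of lifts of two \emph{different} curves of $\sg$; and even for lifts of one curve, the assertion that ``translates under distinct deck elements of a primitive class'' are never homologous in $\ol S_{K_\ell}$ is exactly what is at stake and is not justified by what you wrote.

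The paper avoids all of this by reformulating the problem as a pure graph-connectivity statement: the preimage $p_{K_\ell}^{-1}(\sg)$ contains no separating curve or cut pair iff the dual graph $\Sg_{K_\ell}$ of the degeneration remains connected after removing any two edges. One then analyses the abelian $H_1(K,\Z/\ell)$-action on $\Sg_{K_\ell}$ over the quotient graph $\Sg_K$, reduces (as in Looijenga) to $|\sg|\le 2$, computes edge- and vertex-stabilisers, and shows by a direct count that every vertex has valence $\ge 3$ and any two adjacent vertices are joined by $\ge 2$ edges. This handles all $\ell\ge 2$, separating and non-separating $\tilde\gm$, and mixed cut pairs uniformly.
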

\begin{proof}The condition on the covering map $p_K\co S_K\ra S_{g,n}$ implies, by Hurwitz 
Theorem, that, for an admissible set $\sg$ of s.c.c.'s on $S_{g,n}$, the inverse image
$p_{K}^{-1}(\sg)$ determines an admissible set of s.c.c.'s on the closed Riemann surface $\ol{S}_K$
and that this is of genus at least $2$ (except, possibly, for the case $g=0$ and $n=3$, 
which is irrelevant).

Let us denote by $\phi_\ell\co S_{K_\ell}\ra S_K$ the abelian covering defined by the subgroup 
$K_\ell$ of $K$. It is enough to prove that the dual graph $\Sg_{K_\ell}$ of the stable curve obtained 
collapsing on $S_{K_\ell}$ the s.c.c.'s contained in $\phi_\ell^{-1}(\sg)$ stays connected if we remove 
two edges, where $\sg$ is an admissible set of s.c.c.'s on the closed Riemann surface $\ol{S}_K$.
Let $\Sg_K$ be the dual graph of the stable curve obtained collapsing on $S_K$ the s.c.c.'s in $\sg$.
As in the proof of Proposition~2 \cite{L1}, it is enough to consider the case in which 
the cardinality of $\sg$ is  $\leq 2$.

Let us observe in the first place that the fact that Dehn twists on $S_K$ lift to
homeomorphisms of the covering surface $S_{K_\ell}$ implies that each s.c.c. contained in 
$\phi_\ell^{-1}(\sg)$ bounds two distinct connected components of $S_K$. Then, the graph 
$\Sg_{K_\ell}$ does not contain loops and there is a natural action without inversions of 
the deck transformation group $H_1(K,\Z/\ell)$ on $\Sg_{K_\ell}$ with quotient the graph $\Sg_K$.

A vertex $v$ of the graph $\Sg_K$ corresponds to a connected component $S'$ of $S_K\ssm\sg$ 
The vertices of $\Sg_{K_\ell}$ lying over $v$ have the same stabilizer for the action of $H_1(K,\Z/\ell)$
and this is the image of $H_1(S')$ in $H_1(K,\Z/\ell)$ for the homomorphism induced by inclusion.
Similarly, the edges of $\Sg_{K_\ell}$ lying over an edge $e$ of $\Sg_K$, corresponding to the
s.c.c. $\gm$ of the set $\sg$, have for stabilizer the associated cyclic subgroup $I_e$ of 
$H_1(K,\Z/\ell)$.

The number of vertices (resp. edges) of $\Sg_{K_\ell}$ lying over $v$ (resp. $e$) is given by the 
index of the stabilizer of one of these vertices (resp. edges) in the group $H_1(K,\Z/\ell)$. 

A simple count and symmetry show that each vertex of $\Sg_{K_\ell}$ is connected to 
the rest of the graph by at least three edges and that, when two vertices are connected by an edge, 
they are connected by at least two edges. Therefore, the graph cannot be disconnected removing 
two edges.

\end{proof} 

Looijenga's results then generalize to all pairs $(g,n)$, for $2g-2+n>0$: 

\begin{theorem}\label{smooth covers} For $2g-2+n>0$, let $K_\ell$ be a
subgroup of $\Pi_{g,n}$ of the type defined in Lemma~\ref{no cut-pairs}. Then, the associated 
compact Looijenga level structure $\ccM^{K_\ell,(m)}$ over $\ccM_{g,[n]}$ is smooth for all 
$m\geq 2$ and representable for $\ell\geq 3$ or $m\geq 3$. 
\end{theorem}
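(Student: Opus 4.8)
The plan is to assemble this theorem directly from the machinery developed in the section, treating it essentially as a corollary of Lemma~\ref{no cut-pairs} combined with Theorem~\ref{smoothness criterion II} and ii.) Remarks~\ref{representability}. First I would check that the subgroup $K_\ell=[K,K]K^\ell$ of Lemma~\ref{no cut-pairs} actually satisfies the hypotheses of ii.) Remarks~\ref{representability}: since $K$ is a proper invariant finite index subgroup whose covering ramifies non-trivially over every puncture, the quotient $\Pi_{g,n}/K_\ell$ surjects onto $H_1(S_g,\Z/\ell)$ (because $K_\ell$ contains $[\Pi_{g,n},\Pi_{g,n}]\Pi_{g,n}^\ell\cap K$-type data, and one sees $H_1(S_g,\Z/\ell)$ appears as a quotient after abelianizing), and Lemma~\ref{no cut-pairs} guarantees that $p_{K_\ell}^{-1}(\gm)$ is a union of non-separating curves for every non-peripheral s.c.c.\ $\gm$. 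Hence ii.) Remarks~\ref{representability} applies and gives $\GG^{K_\ell,(m)}\le\GG_{g,n}$ and $\GG^{K_\ell,(m)}\le\GG(\ell)$ for all $m\ge 2$, so that $\ccM^{K_\ell,(m)}$ dominates $\ccM^{(\ell)}_{g,n}$.

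Next, representability: for $\ell\ge 3$ this is already contained in ii.) Remarks~\ref{representability} via Proposition~\ref{Deligne}, since $\ccM^{K_\ell,(m)}$ dominates the abelian level $\ccM^{(\ell)}_{g,n}$ with $\ell\ge 3$. For the remaining case $\ell=2$ but $m\ge 3$, I would instead invoke Theorem~\ref{comparison}: the Looijenga level $\GG^{K_\ell,(m)}$ is contained in the geometric level $\GG^{K_\ell}$, which in turn is contained in the abelian level $\GG(m)$ of $\GG_{g,[n]}$ pulled back appropriately — more directly, one observes that $\GG^{K_\ell,(m)}$, via the isomorphism $(3)$, lifts to $N_{\GG(S_K)}(G_K)\cap\GG(m)$, and since $K_\ell\supseteq[K,K]K^m$-analogues force the quotient to carry enough homological information, one checks $\GG^{K_\ell,(m)}$ is contained in an abelian level of order $m\ge 3$ of $\GG_{g,[n]}$ itself; then Proposition~\ref{Deligne} yields representability. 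I would spell this containment out carefully, as it is the one genuinely new bookkeeping point not already packaged in a quoted statement.

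For smoothness, I would apply Theorem~\ref{smoothness criterion II} verbatim: that theorem says $\ccM^{K,(m)}$ is smooth for all $m\ge 2$ provided the covering $p_K\co S_K\ra S_{g,n}$ has the property that $p_K^{-1}(\sg)$ contains no cut pairs for every admissible set $\sg$. Lemma~\ref{no cut-pairs} establishes exactly this property for $K_\ell$ (indeed it proves the stronger statement that $p_{K_\ell}^{-1}(\sg)$ contains neither separating curves nor cut pairs). Therefore $\ccM^{K_\ell,(m)}$ is smooth for all $m\ge 2$, which together with the representability discussion above completes the proof.

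The main obstacle I anticipate is not smoothness — which is a clean citation of Theorem~\ref{smoothness criterion II} fed by Lemma~\ref{no cut-pairs} — but rather making the representability argument airtight in the boundary case $\ell=2$, $m\ge 3$, where one cannot simply quote ii.) Remarks~\ref{representability}. There one must verify that $\ccM^{K_\ell,(m)}$ still dominates an abelian level of order $\ge 3$ over $\ccM_{g,[n]}$ (or over $\ccM(S_K)$ in a way that descends), using that $m\ge 3$ now plays the role that $\ell\ge 3$ played before; concretely, one uses that $K_\ell\supseteq K_m$ when $m$ is a multiple of $\ell$, reducing to the previous case, and handles general $m\ge 3$ by the monotonicity of the tower of Looijenga levels in $m$ established implicitly around isomorphism $(3)$ and Corollary~\ref{cofinal}. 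Alternatively, and perhaps more cleanly, one notes that $\GG^{K_\ell,(m)}\le\GG(m)$ holds because the composite $\GG^{K_\ell,(m)}\hookra N_{\GG(S_K)}(G_K)\cap\GG(m)$ together with the fact that $p_{K_\ell}^{-1}$ of every non-peripheral s.c.c.\ is non-separating forces the induced action on $H_1(S_g,\Z/m)$ downstairs to be trivial; invoking Proposition~\ref{Deligne} with this $m\ge 3$ then finishes the argument.
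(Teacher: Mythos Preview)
Your approach matches the paper's exactly: smoothness is obtained by feeding Lemma~\ref{no cut-pairs} into Theorem~\ref{smoothness criterion II}, and representability by checking $\GG^{K_\ell,(m)}<\GG_{g,n}\cap\GG(k)$ for some $k\geq 3$ (via the surjection $\Pi_{g,n}/K_\ell\tura H_1(S_g,\Z/\ell)$ and Theorem~\ref{comparison}) and then invoking Proposition~\ref{Deligne}. The paper is just as brief on the case $\ell=2$, $m\geq 3$ as you fear, so you can drop the exploratory detours (the ``$K_\ell\supseteq K_m$'' reduction and the monotonicity-in-$m$ argument do not actually work as stated) and keep only your final observation that the lift into $N_{\GG(S_{K_\ell})}(G_{K_\ell})\cap\GG(m)$ forces triviality of the induced action on $H_1(S_g,\Z/m)$.
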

\begin{proof}The statement about smoothness is a straightforward consequence of 
Theorem~\ref{smoothness criterion II} and Lemma~\ref{no cut-pairs}.

The hypotheses of Lemma~\ref{no cut-pairs} on the subgroup $K$ also imply 
that $\GG^{K_\ell,(m)}<\GG_{g,n}$, for all $m\geq 2$. Moreover,
the quotient $\Pi_{g,n}/K_\ell$ surjects onto $H_1(S_g,\Z/\ell)$. Therefore, if $\ell\geq 3$ or 
$m\geq 3$, it holds $\GG^{K_\ell,(m)}<\GG_{g,n}\cap\GG(k)$, for some $k\geq 3$, and the 
associated compact level structure is representable.

\end{proof}

It is not difficult to determine explicitly the local monodromy coefficients for the levels 
$\GG^{K,(m)}$ satisfying the hypotheses of Theorem~\ref{smoothness criterion II}.

For a non-peripheral s.c.c. $\gm$ on $S_{g,n}$, let $c_\gm$ be the order in the quotient group 
$G_K=\Pi_{g,n}/K$ of an element of $\Pi_{g,n}$, whose free homotopy class contains 
$\gm$. By the invariance of $K$, the positive integer $c_\gm$ depends 
only on the topological type of $S_{g,n}\ssm\gm$. It then holds:

\begin{proposition}\label{monodromy coefficients2} Let $K$ be a finite index invariant subgroup 
of $\Pi_{g,n}$ satisfying the hypotheses of Theorem~\ref{smoothness criterion II}. 
With the same notations of Theorem~\ref{monodromy coefficients}, 
for $m\geq 2$, the kernel of $\psi_U^{K,(m)}$ is given by:
$$\sum_{e\in\Sg(C)}mc_{\gm_e}e.$$
\end{proposition}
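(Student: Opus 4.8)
The plan is to identify $\ker\psi_U^{K,(m)}$ with $E_{\Sg(C)}\cap\GG^{K,(m)}$ inside the free abelian group $E_{\Sg(C)}=\bigoplus_e\Z\cdot\tau_{\gm_e}$, and to compute this intersection by feeding the description of $\GG^{K,(m)}$ given by the isomorphism $(3)$ of Section~\ref{Loo} into Theorem~\ref{monodromy coefficients} applied to the stable curve $C_K$ over $[C]$ in $\ccM(S_K)$. The mechanism I would use throughout is that, although a single Dehn twist $\tau_{\gm_e}$ need not lift to $S_K$, the power $\tau_{\gm_e}^{c_{\gm_e}}$ always does, and lifts precisely to the product $\prod_{\tilde\gm}\tau_{\tilde\gm}$ of Dehn twists along the components $\tilde\gm$ of $p_K^{-1}(\gm_e)$: this follows from a computation on an annular neighbourhood of $\gm_e$, over which $p_K$ restricts on each preimage component to a connected $c_{\gm_e}$-fold covering of annuli, so that a full twist below unwinds to a $(1/c_{\gm_e})$-twist and only the $c_{\gm_e}$-th power closes up to a homeomorphism. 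Consequently, when $c_{\gm_e}\mid h_e$ for all $e$, the element $f=\prod_e\tau_{\gm_e}^{h_e}$ lifts to $\tilde\phi_0:=\prod_e\prod_{\tilde\gm\subset p_K^{-1}(\gm_e)}\tau_{\tilde\gm}^{\,h_e/c_{\gm_e}}$, a product of Dehn twists along the admissible set $p_K^{-1}(\sg)$ of s.c.c.'s on $S_K$, and this lift centralizes $G_K$ because $G_K$ permutes the curves inside each $p_K^{-1}(\gm_e)$ while the exponent $h_e/c_{\gm_e}$ is constant on that set. The inclusion $\sum_e mc_{\gm_e}e\subseteq\ker\psi_U^{K,(m)}$ is then immediate: $\tau_{\gm_e}^{mc_{\gm_e}}$ lifts to $\prod_{\tilde\gm}\tau_{\tilde\gm}^{m}$, which normalizes $G_K$ (being a lift it descends, hence normalizes the deck group) and lies in the abelian level $\GG(m)$ of $\GG(S_K)$, since an $m$-th power of a Dehn twist acts trivially on $H_1(\ol S_K,\Z/m)$ whether or not the twisting curve is separating; by $(3)$ this says exactly that $\tau_{\gm_e}^{mc_{\gm_e}}\in\GG^{K,(m)}$.

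For the reverse inclusion I would take $f=\prod_e\tau_{\gm_e}^{h_e}\in\GG^{K,(m)}$. Theorem~\ref{comparison} applies — under the hypotheses of Theorem~\ref{smoothness criterion II} together with ii.) Remarks~\ref{representability}, $G_K$ contains no hyperelliptic involution — so $\GG^{K,(m)}\le\GG^K$; and since the local monodromy kernel of the geometric level $\GG^K$ in $U$ is $\sum_e c_{\gm_e}e$ (a lift of $f$ centralizing $G_K$ must, near $p_K^{-1}(\sg)$, be a product of twists along $p_K^{-1}(\sg)$, and such a product exists iff $c_{\gm_e}\mid h_e$), we get $h_e=c_{\gm_e}h_e'$. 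Thus $f$ lifts to $\tilde\phi_0$ as above, and because $p_K^{-1}(\sg)$ contains neither cut pairs (hypothesis of Theorem~\ref{smoothness criterion II}) nor separating curves (as in Lemma~\ref{no cut-pairs}, this too holds here), Theorem~\ref{monodromy coefficients} applied to $\ccM(S_K)$ shows $\tilde\phi_0\in\GG(m)$ iff $m\mid h_e'$ for every $e$. What remains is to neutralize the $G_K$-indeterminacy of the lift: by $(3)$, $f\in\GG^{K,(m)}$ forces $\rho_{(m)}(\tilde\phi_0)\in\rho_{(m)}(G_K)$, and one uses that $\rho_{(m)}(\tilde\phi_0)$ — a product of transvections along pairwise disjoint cycles — is unipotent with square-zero nilpotent part, whereas a non-trivial element of $\rho_{(m)}(G_K)$ has finite order and (under the present hypotheses) acts faithfully on $H_1(\ol S_K,\Z/m)$; reconciling these two properties forces $\rho_{(m)}(\tilde\phi_0)=\mathrm{id}$, hence $\tilde\phi_0\in\GG(m)$ and $mc_{\gm_e}\mid h_e$ for all $e$.

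The step I expect to be the main obstacle is exactly this last one, ruling out a non-trivial deck-transformation correction. For $m\ge3$ (or whenever $\gcd(m,|G_K|)=1$) the incompatibility falls out of the structure of finite-order elements of $\Sp(H_1(\ol S_K,\Z/m))$, but for $m=2$ it genuinely relies on the absence of hyperelliptic involutions in $G_K$ together with a closer analysis of finite-order symplectic matrices with unipotent reduction, and I would want to treat that case carefully. A cleaner alternative, which I would fall back on to avoid the mapping-class-group bookkeeping entirely, reads the answer off the geometry: by Theorem~\ref{normal}, $\ccM^{K,(m)}$ is the smooth component of $\psi^{-1}(\ccM_{G_K})$ in $\ccM(S_K)^{(m)}$; near $[C_K]$ the covering $\ccM(S_K)^{(m)}\ra\ccM(S_K)$ is $w_{\tilde\gm}\mapsto w_{\tilde\gm}^{m}$ by Theorem~\ref{monodromy coefficients} (no cut pairs), while $\ccM'_{G_K}\ra\ccM_{g,[n]}$ acquires ramification of order $c_{\gm_e}$ along $\{z_e=0\}$ — the smoothing parameter $z_e$ of the node corresponding to $e$ pulling back to $w_{\tilde\gm}^{c_{\gm_e}}$ — by the local structure of $G_K$-equivariant deformations (cf. \S 5.1.1 \cite{B-R}); composing, $z_e$ pulls back to $\tilde w_{\tilde\gm}^{\,mc_{\gm_e}}$ on $\ccM^{K,(m)}$, which is precisely the asserted formula for $\ker\psi_U^{K,(m)}$.
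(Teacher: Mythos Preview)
Your strategy coincides with the paper's: lift to $S_K$ via the identification $(3)$, invoke Theorem~\ref{monodromy coefficients} on the admissible set $p_K^{-1}(\sg)$, and then neutralise the $G_K$-indeterminacy of the lift. You have also correctly located the only genuine obstacle---showing that $\rho_{(m)}(\tilde\phi_0)\in\rho_{(m)}(G_K)$ forces $\rho_{(m)}(\tilde\phi_0)=\mathrm{id}$---and you are right that your ``unipotent versus finite order'' dichotomy does not survive reduction mod~$m$: every element of $\Sp(H_1(\ol S_K,\Z/m))$ has finite order, and a semisimple integer matrix can reduce to a unipotent one. Your geometric fallback through Theorem~\ref{normal} and the local form of $\ccM(S_K)^{(m)}\to\ccM(S_K)$ and $\ccM'_{G_K}\to\ccM_{g,[n]}$ is correct and is in some ways the most transparent route, though it imports more machinery than is needed.

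The paper dispatches the $G_K$-ambiguity by a direct observation that works uniformly for all $m\ge 2$. Write $\ol S_K\setminus p_K^{-1}(\gm)=\coprod_i S_i$. The canonical lift $\xi_\gm=\prod_{\tilde\gm}\tau_{\tilde\gm}$ (and any power, and any product over several $\gm_e$) is supported in annular neighbourhoods of $p_K^{-1}(\sg)$, so it fixes every subsurface $S_i$ and acts trivially on the image of $H_1(S_i,\Z/m)$ in $H_1(\ol S_K,\Z/m)$. No non-trivial $\alpha\in G_K$ shares both properties: either $\alpha$ permutes the $S_i$ non-trivially, or it fixes them all---but then (using that the classes of the curves in $p_K^{-1}(\sg)$ are distinct and non-zero, by the no-cut-pair hypothesis) $\alpha$ also fixes each $\tilde\gm$ and hence the dual graph, so triviality on all the $H_1(S_i)$-images would force $\alpha$ to act trivially on the whole of $H_1(\ol S_K,\Z/m)$, contradicting the faithfulness of $\rho_{(m)}|_{G_K}$. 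Hence the cyclic group $\rho_{(m)}(\langle\tilde\phi_0\rangle)$ meets $\rho_{(m)}(G_K)$ only in the identity, and the conclusion follows. This is the missing lemma that makes your first argument go through without any case analysis on $m$.
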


\begin{proof}For a non-peripheral s.c.c. $\gm$ on $S_{g,n}$, let us denote by $k_\gm$ the order of 
the image of the Dehn twist $\tau_\gm$ in the quotient group $\GG_{g,n}/\GG^{K,(m)}$ and let
$\amalg_{i=1}^h S_i:=\ol{S}_{K}\ssm p_{K}^{-1}(\gm)$. 

Let us show first that $c_\gm$ divides $k_\gm$.
This follows from the fact that any lift to $\ol{S}_{K}$ of a power 
$\tau_\gm^k$,  where $c_\gm$ does not divide $k$, either switches 
the connected components of $\ol{S}_{K}\ssm p_{K}^{-1}(\gm)$, and does not act trivially on 
$H_1(\ol{S}_{K},\Z/m)$ for $m\geq 2$, or restricts to a non-trivial finite order homeomorphism of 
some connected component $S_i$ of $\ol{S}_{K}\ssm p_{K}^{-1}(\gm)$, which then acts 
non-trivially on the image of $H_1(S_i,\Z/m)$ in $H_1(\ol{S}_{K},\Z/m)$, for $m\geq 2$.

The power  $\tau_\gm^{c_\gm}\in\GG_{g,[n]}$ of the given Dehn twist 
lifts to a product $\xi_\gm:=\prod_{\delta\in\sg_\gm}\tau_\delta\in\GG(S_{K})$ of Dehn twists along the 
set $\sg_\gm$ of disjoint s.c.c.'s on $S_{K}$ contained in $p_{K}^{-1}(\gm)$. This lift is contained in 
the centralizer of $G_{K}$. Therefore, another lift is of the form $\xi_\gm\cdot\alpha$, for some 
$\alpha\in G_K$, where $\xi_\gm$ and $\alpha$ are commuting elements of $\GG(S_{K})$. 

Any power of $\xi_\gm$ fixes the subsurface $S_i$ of $\ol{S}_{K}$ and acts trivially on the image of 
$H_1(S_i,\Z/m)$ in $H_1(\ol{S}_{K},\Z/m)$, for $i=1,\ldots,h$, while no non-trivial element of $G_K$ 
has this property. Hence, the cyclic subgroup generated by the image of $\xi_\gm$ in the group 
$\Sp(H_1(\ol{S}_{K},\Z/m)$ intersects trivially the image of the group $G_K$.

It then follows that the image of $\xi_\gm$ in $\Sp(H_1(\ol{S}_{K},\Z/m)$ is of minimal order 
between those of all possible lifts of $\tau_\gm^{c_\gm}$ to $\GG(S_{K})$.

By hypothesis, the set $\sg_\gm$ consists of non-separating s.c.c. and does not contain cut pairs.
Hence, by Theorem~\ref{monodromy coefficients}, the image of $\xi_\gm$ in
$\Sp(H_1(\ol{S}_{K},\Z/m)$ has order $m$. Therefore, it holds $k_\gm=mc_\gm$. 

\end{proof}

\section[The boundary of level structures]
{The boundary of level structures and the complex of curves}\label{simplicial}
For simplicity, we restrict here to level structures over $\ccM_{g,n}$. Most of the result
of this section are well known or have already appeared elsewhere (\S 3 of \cite{PFT}) 
but there are also some rectifications to \cite{PFT} (notably, in 
Proposition~\ref{without rotations} and Proposition~\ref{no self-intersection}). 

For a point $[C]\in\dd\cM_{g,n}$, let $C_{g_1,n_1}\amalg\ldots\amalg C_{g_h,n_h}$
be the normalization of $C$, where $C_{g_i,n_i}$, for $i=1,\ldots, h$, is a genus $g_i$ smooth curve
with $n_i$ labels on it (the labels also include the inverse images of singularities in $C$).
Then, there is a natural morphism, which we call {\it boundary map}:
$$\beta_C:\ccM_{g_1,n_1}\times\dots\times\ccM_{g_h,n_h}\lra\ccM_{g,n}.$$ 
The image of $\cM_{g_1,n_1}\times\dots\times\cM_{g_h,n_h}$ by $\beta_C$ parametrizes curves
homeomorphic to $C$ and is called a {\it stratum}. We denote
the restriction of $\beta_C$ to $\cM_{g_1,n_1}\times\dots\times\cM_{g_h,n_h}$ by
$\dot{\beta}_C$ and call it a {\it stratum map}. In general, these morphisms are not
embeddings. 

A variant of the Bers bordification $\ol{T}_{g,n}$ of the Teichm\"uller space $T_{g,n}$ is the
{\it Harvey bordification} $\wh{T}_{g,n}$. This can be constructed by means of
the D--M compactification $\ccM_{g,n}$. 

Let $\wh{\cM}_{g,n}$ be the real oriented blow-up of $\ccM_{g,n}$ along the 
D--M boundary.  Its boundary $\dd\wM_{g,n}:=\wM_{g,n}\ssm\cM_{g,n}$ is 
homeomorphic to a deleted tubular neighborhood of the D--M boundary of $\ccM_{g,n}$ 
and the natural projection $\wh{\cM}_{g,n}\ra\ccM_{g,n}$ restricts over each codimension $k$ 
stratum to a bundle in $k$-dimensional tori. The inclusion $\cM_{g,n}\hookra\wh{\cM}_{g,n}$  is a 
homotopy equivalence and then induces an inclusion of the respective
universal covers $T_{g,n}\hookra\wh{T}_{g,n}$. 

From Proposition~\ref{Deligne}, it follows that $\wh{T}_{g,n}$ is
representable and, therefore, is a real analytic manifold with corners containing $T_{g,n}$ as an
open dense submanifold. The {\it ideal boundary} of Teichm{\"u}ller space is defined to be
$\dd\wh{T}_{g,n}:=\wh{T}_{g,n}\ssm T_{g,n}$.

The strata of $\dd\wT_{g,n}$ lying above the boundary map $\beta_C$, are isomorphic to the 
product $(\R^+)^k\times\wT_{g_1,n_1}\times\dots\times\wT_{g_h,n_h}$.
Thus, they are all contractible. Hence, $\dd\wT_{g,n}$ is homotopy equivalent to the geometric 
realization of the nerve of its cover by irreducible components.  

This nerve is described by 
the simplicial complex whose simplices consist of sets of distinct, non-trivial 
isotopy classes of non-peripheral s.c.c. on $S_{g,n}$, such that they admit a set of disjoint 
representatives. This is the complex of curves $C(S_{g,n})$ of 
$S_{g,n}$. It is easy to check that the combinatorial dimension of $C(S_{g,n})$ is one less the 
complex dimension of the moduli space 
$\cM_{g,n}$, i.e.:\, $n-4$ for $g=0$ and $3g-4+n$ for $g\geq 1$.
The natural action of $\GG_{g,n}$ on $\dd\wh{T}_{g,n}$ then induces a simplicial action 
of $\GG_{g,n}$ on $C(S_{g,n})$.

Let $\wh{\cM}_{g,n}^\ld$ be the oriented real blow-up of $\ccM_{g,n}^\ld$ along $\dd\cM^\ld$.
It holds $\wh{\cM}_{g,n}^\ld\cong [\wT_{g,n}/\GG^\ld]$ and then also
$\dd\wh{\cM}_{g,n}^\ld\cong [(\dd\wT_{g,n})/\GG^\ld]$. Therefore, the quotient 
$|C(S_{g,n})|/\GG^\ld$ is a cellular complex which realizes the nerve of the cover of 
$\dd\wh{\cM}^\ld$ by irreducible components and, thus, the nerve of the cover of $\dd\cM^\ld$ 
by irreducible components. 

\begin{definition}\label{nervedef} For $2g-2+n>0$, let $\GG^\ld$ be a level of $\GG_{g,n}$. Then, 
we define $C^\ld(S_{g,n})_\bt$ to be the finite simplicial set associated to the cellular complex
$|C(S_{g,n})|/\GG^\ld$. This simplicial set parameterizes the nerve of the cover of 
$\dd\cM^\ld$ by its irreducible components.
\end{definition}

Let $\sg$ be a non-degenerate $k$-simplex of $C(S_{g,n})_\bt^\ld$, such that, for some lift $\td{\sg}$
of $\sg$ to the curve complex $C(S_{g,n})$, it holds 
$S_{g,n}\ssm\td{\sg}:=S_{g_1,n_1}\amalg\ldots\amalg S_{g_h,n_h}$. Then, to the simplex $\sg$,
is associated a boundary map $\beta_{\sg}^\ld\co\delta_{\sg}^\ld\ra\ccM^\ld$ which is the restriction
to an irreducible component of the pull-back $\beta'$ of the boundary map $\beta$ associated to the 
topological type of the Riemann surface $S_{g,n}\ssm\td{\sg}$:

\[ \begin{array}{ccc}
X\; &\lra &\ccM_{g_1,n_1}\times\dots\times\ccM_{g_h,n_h}\\
\;\Big\da {\scriptstyle \beta'} &{\st\square} &\Big\da {\st \beta} \\
\ccM^{\ld} &\lra & \ccM_{g,n}.
\end{array} \]

The image of $\beta_{\sg}^\ld$ is the {\it closed boundary stratum associated to $\sg$}.
Similarly, we call the restriction $\dot{\beta}_\sg^\ld\co\od_\sg^\ld\ra\ccM_{g,n}^\ld$ of 
$\beta_\sg^\ld$ over $\cM_{g_1,n_1}\times\dots\times\cM_{g_h,n_h}$ the 
{\it stratum map of $\ccM^\ld$ associated to $\sg$} 
and its image is the {\it  boundary stratum associated to $\sg$}. By log-{\'e}tale base change, the
natural morphism $\od_\sg^\ld\ra\cM_{g_1,n_1}\times\dots\times\cM_{g_h,n_h}$ is {\'e}tale.

Similar definitions can be given with $\wM_{g,n}^\ld$ in place of $\ccM_{g,n}^\ld$. For a
simplex $\sg\in C(S_{g,n})$, we define the {\it ideal boundary map}
$\hat{\beta}_{\sg}^\ld\co\hd_{\sg}^\ld\ra\wM_{g,n}^\ld$, as the pull-back of $\beta_{\sg}^\ld$ via
the blow-up map $\wM^\ld_{g,n}\ra\ccM_{g,n}^\ld$.

The fundamental group of $\hd_\sg^\ld$ is described as follows. 
Let $\hat{\delta}_\sg^\ld$ be the real oriented blow-up of $\delta_\sg^\ld$ along the divisor
$\delta_\sg^\ld\ssm\od_\sg^\ld$. The embedding $\od^\ld_\sg\hookra\hat{\delta}_\sg^\ld$
is a homotopy equivalence and $\hd_\sg^\ld$ is a bundle over
$\hat{\delta}_\sg^\ld$ in $k$-dimensional tori. Hence, there is an isomorphism 
$\pi_1(\hat{\delta}_\sg^\ld)\cong\pi_1(\od_\sg^\ld)$ and a short exact sequence:
$$1\ra\bigoplus_{\gamma\in \sg}\Z\cdot\gm\ra
\pi_1(\hd^\ld_\sg)\ra\pi_1(\hat{\delta}_\sg^\ld)\ra 1.$$

A connected component of the fibred product $\hd_\sg\times_{\wM_{g,n}}\wT_{g,n}$ is naturally 
isomorphic to $(\R^+)^k\times\wT_{g_1,n_1}\times\dots\times\wT_{g_h,n_h}$. Therefore, the 
fundamental group of $\hd_\sg$ is isomorphic to the subgroup of elements in
$\GG_{g,n}$ which stabilize one of these connected components, preserving,
moreover, the order of its factors. So, if we let
$$\GG_{\vec{\sg}}:=\{f\in\GG_{g,n}|\; f(\vec{\gamma})=\vec{\gamma},\, \forall\gamma\in \sg\},$$
where $\vec{\gamma}$ is the s.c.c. $\gamma$ endowed with an orientation, it holds 
$\pi_1(\hd_\sg)\cong \GG_{\vec{\sg}}$ and, for the trivial level, the above short exact
sequence takes the more familiar form
$$1\ra\bigoplus_{\gamma\in \sg}\Z\cdot\tau_\gamma\ra\GG_{\vec{\sg}}
\ra\GG_{g_1,n_1}\times\dots\times\GG_{g_h,n_h}\ra 1.$$
By the same argument, more generally, it holds
$\pi_1(\hd^\ld_\sg)\cong\GG^\ld\cap \GG_{\vec{\sg}}$.

The fundamental group of $\hat{\beta}_{\sg}(\hd_{\sg})$ is
isomorphic to the stabilizer $\GG_\sg$ of $\sg$, for the action of $\GG_{g,n}$ 
on $C(S_{g,n})$ and more generally, it holds
$\pi_1(\hat{\beta}_{\sg}^\ld(\hd_{\sg}^\ld))\cong\GG^\ld\cap\GG_\sg$.

Let us observe that an $f\in\GG_\sg$ can switch the s.c.c. in $\sg$ as well as their orientations.
Hence, denoting by $\Sigma_{\sg}\{\pm\}$ the group of signed permutations of the set
$\sg$, there is an exact sequence 
$$1\ra\GG^\ld\cap \GG_{\vec{\sg}}\ra\GG^\ld\cap\GG_\sg\ra\Sigma_{\sg}\{\pm\}$$ 
and the Galois group of the \'etale covering $\od_\sg^\ld\ra\Im\,\dot{\beta}_\sg^\ld$ is
isomorphic to the image of $\GG^\ld\cap\GG_\sg$ in $\Sigma_{\sg}\{\pm\}$. Thus, the stratum 
map $\dot{\beta}_\sg^\ld$ is injective if and only if 
$\GG^\ld\cap \GG_{\vec{\sg}}=\GG^\ld\cap \GG_\sg$. 

In Proposition~3.1 in \cite{PFT}, it is wrongly claimed that all boundary maps
$\delta_\sg^{(m)}\ra\ccM^{(m)}$ are injective for $m\geq 3$. What is actually proved there
is the weaker statement (which is also an immediate consequence of Corollary~1.8 \cite{Ivanov}):

\begin{proposition}\label{without rotations}Let $\GG^\ld$ be a level of $\GG_{g,n}$ contained
in some abelian level $\GG(m)$, for $m\geq 3$. Then, the group $\GG^\ld$ operates without 
inversions on the curve complex $C(S_{g,n})$.
\end{proposition}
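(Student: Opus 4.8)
The statement asserts that if $\GG^\ld \le \GG(m)$ for some $m \ge 3$, then $\GG^\ld$ acts without inversions on the curve complex $C(S_{g,n})$, i.e. no element of $\GG^\ld$ fixes a simplex while reversing the orientation of one of its s.c.c.'s. The plan is to argue by contradiction: suppose $f \in \GG^\ld$ satisfies $f(\sg) = \sg$ as a set of isotopy classes but $f$ reverses the orientation of some $\gm \in \sg$ (equivalently, $f$ sends $\vec{\gm}$ to $\cev{\gm}$ for that oriented curve). I would then reduce immediately to a single curve: if $f$ fixes $\sg$ and inverts $\gm$, then by passing to a suitable power $f^k$ (which still lies in $\GG^\ld$, since $\GG^\ld$ is a subgroup) I can assume $f$ fixes every curve in $\sg$ individually and still inverts $\gm$. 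So without loss of generality $f$ fixes a single non-peripheral s.c.c. $\gm$ on $S_{g,n}$ with $f(\vec{\gm}) = \cev{\gm}$.

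Next I would extract a homological contradiction with $f \in \GG(m)$, $m \ge 3$. The point is that an orientation-reversing self-homeomorphism of a neighborhood of $\gm$ acts on $H_1$ in a way incompatible with acting trivially mod $m$ for $m \ge 3$. Concretely, there are two cases. If $\gm$ is non-separating, then its homology class $\ol{\gm} \in H_1(S_g, \Z)$ is nonzero and primitive; since $f$ reverses the orientation of $\gm$, $f$ acts on $\ol{\gm}$ by $\ol{\gm} \mapsto -\ol{\gm}$ in $H_1(S_g, \Z)$, hence $f$ acts on $H_1(S_g,\Z/m)$ by $\ol{\gm} \mapsto -\ol{\gm}$, and this is nontrivial precisely because $2\ol{\gm} \not\equiv 0 \pmod m$ when $m \ge 3$ and $\ol{\gm}$ is primitive. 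This contradicts $f \in \GG(m)$. If $\gm$ is separating, $\ol{\gm} = 0$ in homology and a slightly more careful argument is needed: here one should use that $f$ inverting $\gm$ while preserving orientation of $S_{g,n}$ forces $f$ to swap the two sides of $\gm$ (or to act as a reflection-type map on a neighborhood); pick a non-separating s.c.c. $\al$ crossing $\gm$ transversally in one point, and track $\ol{\al}$. The half-twist-type picture around $\gm$ shows $f(\ol{\al}) = \pm\ol{\al} + (\text{terms from the other component})$ with a genuine nonzero contribution mod $m$, again contradicting triviality mod $m$ for $m \ge 3$. This is essentially the content of Corollary~1.8 in \cite{Ivanov}, which the paper already cites, so one option is simply to invoke that and keep the homological remarks as motivation.

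The main obstacle I anticipate is making the separating case (and, more generally, the case where $f$ permutes the components of $S_{g,n}\ssm\gm$ nontrivially) fully rigorous: one has to be careful that "$f$ inverts $\vec{\gm}$" genuinely forces a nontrivial action on some homology class mod $m$, rather than just on some class mod $2$. The cleanest route is to localize near $\gm$: an orientation-preserving homeomorphism of $S_{g,n}$ fixing $\gm$ as a set and reversing its transverse orientation must, near $\gm$, look like the composition of a half-twist with a flip exchanging the two sides; choosing a curve $\al$ meeting $\gm$ once and computing the induced action on $\langle \ol{\al}, \ol{\gm}\rangle$ (or on the relevant symplectic pair) exhibits an eigenvalue $-1$ or a nontrivial shear mod $m$. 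Packaging this as: "the restriction of $\rho_{(m)}$ detects orientation reversal along any non-peripheral s.c.c. for $m \ge 3$" gives the result, since $\GG^\ld \le \GG(m) = \ker \rho_{(m)}$. I would present the non-separating case in full (it is the conceptual heart) and dispatch the remaining cases either by the same transverse-curve trick or by direct appeal to \cite{Ivanov}.
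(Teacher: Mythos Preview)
Your reading of ``without inversions'' is off, and this misdirects the whole argument. For the simplicial complex $C(S_{g,n})$, vertices are \emph{unoriented} isotopy classes of curves; ``without inversions'' means that if $f\in\GG^\ld$ fixes a simplex $\sg$ setwise, then $f$ fixes each curve of $\sg$ (as an unoriented class). Orientation reversal of a single curve that $f$ already fixes is \emph{not} an inversion in this sense. Consequently your reduction ``pass to a power $f^k$ so that $f^k$ fixes every curve individually but still reverses $\vec\gm$'' is aiming at the wrong target: once $f^k$ fixes each curve of $\sg$, you are done as far as the proposition is concerned, regardless of what $f^k$ does to orientations. The substantive case is $f$ genuinely permuting distinct curves of $\sg$, and passing to a power kills exactly that.

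Separately, your separating-curve plan contains a concrete error: you propose to pick a s.c.c.\ $\alpha$ meeting a separating $\gm$ transversally in one point, but no such $\alpha$ exists, since the algebraic intersection number with a separating curve is always zero. The paper's proof avoids this by arguing with the two complementary subsurfaces bounded by the separating curves: if $f$ sends one bounded subsurface onto a disjoint one, then either punctures get moved (impossible in $\GG_{g,n}$) or nonzero homology classes get moved (impossible in $\GG(m)$). For the non-separating case the paper reduces to the situation where $f$ swaps a cut pair $\{\gm_0,\gm_1\}$ and then splits according to whether $f$ swaps the two components of $S_{g,n}\ssm\{\gm_0,\gm_1\}$; in each subcase one sees $f\notin\GG(m)$ for $m\geq 3$. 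Your single-curve homology argument $[\gm]\mapsto -[\gm]$ is morally the endpoint of one of these subcases, but by itself it does not address the actual phenomenon of two disjoint curves being exchanged.
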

\begin{proof}Let us show first that, given two disjoint oriented separating s.c.c.'s $\vec{\gamma}_0$ 
and $\vec{\gamma}_1$ which either have distinct free homotopy classes or differ in their 
orientations, there is no $f\in\GG^\ld$ such that $f(\vec{\gm}_0)=\vec{\gm}_1$.
Let us denote by $S_0$ and $S_1$ two disjoint subsurfaces of $S_{g,n}$ such that
$\gamma_0=\dd S_0$ and $\gm_1=\dd S_1$, respectively. If there is an $f\in\GG^{\ld}$ such that
$f(\vec{\gamma}_0)=\vec{\gamma}_1$,  then the homeomorphism $f$ maps $S_0$ on $S_1$. If
$S_0$ or $S_1$ are punctured, this is not possible. If they are unpunctured, then their genus is at
least $1$ and $f$ acts non-trivially on the homology of $S_g$, with any system of coefficients. Hence
$f\notin\GG^\ld\leq\GG(m)$, for $m\geq 2$.

The only inversions, for the action of $\GG^\ld$ on $C(S_{g,n})$, may occur on non-separating 
s.c.c.'s. But then the only possibility is that an $f\in\GG^\ld$ swops two disjoint homologous 
s.c.c.'s, i.e. a cut pair $\{\gm_0,\gm_1\}$. Let $S_{g,n}\ssm\{\gm_0,\gm_1\}\cong S_1\amalg S_2$. 
If $f$ swops $\gm_0$ and $\gm_1$, but not $S_1$ and $S_2$, it is easy to see that 
$f(\vec{\gm}_0)=-\vec{\gm}_0$ in the homology group $H_1(S_{g,n},\Z/m)$. Therefore, 
$f\notin\GG(m)$, for $m\geq 3$. If instead $f$ swops $S_1$ and $S_2$, arguing as 
above, we conclude that $f\notin\GG(m)$, for $m\geq 2$. 

\end{proof}

The statement of Proposition~3.1 in \cite{PFT} holds with the following additional hypotheses:

\begin{proposition}\label{no self-intersection}For $2g-2+n>0$, let $\ccM^\ld$ be a level structure 
which dominates a smooth level structure $\ccM^\mu$ over $\ccM_{g,n}$, such that 
$\GG^\mu\leq\GG(m)$, for some $m\geq 2$. Then, all boundary maps 
$\delta_\sg^\ld\ra\ccM^\ld$ are embeddings. In particular, the level $\GG^\ld$ operates 
without inversions on $C(S_{g,n})$.
\end{proposition}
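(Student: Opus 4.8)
The plan is to reduce the assertion to the special case $\ld=\mu$ by base change, and then to analyse the smooth level $\ccM^\mu$ near its D--M boundary. For the reduction, note that since $\ccM^\ld\ra\ccM_{g,n}$ factors through $\ccM^\mu$, for every simplex $\sg$ of $C(S_{g,n})$, with $S_{g,n}\ssm\td\sg\cong S_{g_1,n_1}\amalg\dots\amalg S_{g_h,n_h}$, there is a canonical identification
$$\ccM^\ld\times_{\ccM_{g,n}}\bigl(\ccM_{g_1,n_1}\times\dots\times\ccM_{g_h,n_h}\bigr)\;\cong\;\ccM^\ld\times_{\ccM^\mu}\Bigl(\ccM^\mu\times_{\ccM_{g,n}}\bigl(\ccM_{g_1,n_1}\times\dots\times\ccM_{g_h,n_h}\bigr)\Bigr).$$
Hence the irreducible component $\delta_\sg^\ld$ is, with its reduced structure, an irreducible component of the fibre product $\ccM^\ld\times_{\ccM^\mu}\delta_\sg^\mu$, where $\delta_\sg^\mu$ is the corresponding component of the boundary construction over $\mu$. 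Thus, once we know that each boundary map $\beta_\sg^\mu\co\delta_\sg^\mu\ra\ccM^\mu$ is a closed immersion, its base change along $\ccM^\ld\ra\ccM^\mu$ is a closed immersion into $\ccM^\ld$, and $\delta_\sg^\ld$, being a reduced closed substack of it, is a closed substack of $\ccM^\ld$; the same argument with real oriented blow-ups handles the ideal boundary maps $\hat\beta_\sg^\ld$. So it suffices to prove the statement for the smooth level structure $\ccM^\mu$.

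For $\ccM^\mu$ two things must be checked: (a) $\beta_\sg^\mu$ is injective on points and, more generally, no two strata lying in the closure of the $\sg$-stratum get identified in $\ccM^\mu$ — by the description recalled just before the Proposition this amounts to $\GG^\mu\cap\GG_\tau=\GG^\mu\cap\GG_{\vec\tau}$ for every simplex $\tau\supseteq\sg$; and (b) $\delta_\sg^\mu$ is, in a neighbourhood of each point of its image, a reduced closed subscheme of $\ccM^\mu$. Part (a) is the homological case analysis carried out in the proof of Proposition~\ref{without rotations}: a separating curve of $\tau$ cannot be moved at all, two homologous curves (a cut pair) or two mutually complementary curves cannot be swapped by an element of $\GG(m)$ with $m\geq 2$, and no element of $\GG(m)$ with $m\geq 3$ can reverse the orientation of a non-separating curve. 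The borderline cases that remain for $m=2$ are excluded by the smoothness of $\ccM^\mu$: an element realising such a symmetry would fix a boundary point at which the relevant curves are pinched and act there on the local coordinates incompatibly with the regularity established in (b); equivalently, by Lemma~\ref{self-intersection} applied to $\ccM^\mu$, smoothness forces the representation $\aut(C)\ra\mathrm{GL}(H_1(C,\Z/m))$ to be injective for every stable curve $C$ in the boundary of $\ccM^\mu$, which is exactly the obstruction to the swap.

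Part (b) is where smoothness is used decisively. With the notation of Section~\ref{simplicial}, a suitable neighbourhood $U$ of a boundary point $y$ of $\ccM_{g,n}$ is a polydisc with discriminant locus $z_1\cdots z_s=0$ and $\pi_1(U)\cong E_{\Sg(C)}\cong\Z^s$, and the corresponding component $U^\mu$ of $U\times_{\ccM_{g,n}}\ccM^\mu$ is obtained by filling in the missing divisors of the covering of the punctured polydisc associated to the subgroup $\ker\psi_U^\mu\leq\Z^s$. Such a partial compactification of a finite quotient of a punctured polydisc is regular if and only if $\ker\psi_U^\mu$ is a coordinate sublattice $\bigoplus_{i=1}^s c_i\Z\,e_i$ — any off-diagonal relation forces quotient singularities of type $xy=t^{c}$; this is the toric mechanism already responsible for the cut-pair term $P_{\Sg(C)}$ in Theorem~\ref{monodromy coefficients}. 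Since $\ccM^\mu$ is smooth, $\ker\psi_U^\mu$ is such a coordinate sublattice, so $\dd\cM^\mu$ is locally $\{w_1\cdots w_s=0\}$ inside the smooth $U^\mu$ and the closed stratum obtained by pinching the curves of $\sg$ is locally the smooth coordinate subspace $\{w_{i_1}=\cdots=w_{i_k}=0\}$, a reduced closed subscheme. Combined with (a), this shows that $\beta_\sg^\mu$ is a proper morphism from the normal stack $\delta_\sg^\mu$, injective on points, which locally identifies $\delta_\sg^\mu$ with a reduced coordinate subspace of the smooth stack $\ccM^\mu$; such a morphism is a closed immersion. Finally, the assertion that $\GG^\ld$ acts without inversions on $C(S_{g,n})$ is the special case $\tau=\sg=\{\gm\}$ of (a).

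The hard part will be (b): showing that the regularity of $\ccM^\mu$ forces the local monodromy lattices to be diagonal, so that the closed boundary strata are reduced coordinate subspaces rather than the non-reduced, ramified images which occur at the cut-pair strata of the \emph{non}-smooth abelian levels $\ccM^{(m)}$. This is precisely the phenomenon that was overlooked in Proposition~3.1 of \cite{PFT}, and the hypothesis ``$\ccM^\mu$ smooth'' is imposed exactly in order to rule it out.
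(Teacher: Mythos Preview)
There is a genuine gap in your reduction (a). You claim that injectivity of $\beta_\sg^\mu$ amounts to $\GG^\mu\cap\GG_\tau=\GG^\mu\cap\GG_{\vec\tau}$ for every $\tau\supseteq\sg$, i.e.\ that the only obstruction is an element of $\GG^\mu$ stabilising some $\tau$ setwise while permuting its curves. This is not the right condition: a self-intersection of $\Im\beta_\gm^\mu$ at the $\{\gm,\gm'\}$-stratum occurs as soon as there is \emph{any} $f\in\GG^\mu$ with $f(\gm)=\gm'$ (for $\gm'$ disjoint from $\gm$), regardless of whether $f$ fixes $\{\gm,\gm'\}$ as a set. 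Indeed, such an $f$ forces the divisors indexed by $\gm$ and by $\gm'$ to coincide in $\ccM^\mu$, so their common stratum is a genuine self-crossing. The abelian levels illustrate the distinction sharply: for $m\geq 3$ the group $\GG(m)$ acts without inversions (Proposition~\ref{without rotations}), so your condition is satisfied, yet by ii.)\ of Theorem~\ref{abelian boundary} the map $\beta_\gm^{(m)}$ is \emph{not} an embedding for non-separating $\gm$, precisely because Putman produces $f\in\GG(m)$ sending $\gm$ to the other curve of a cut pair without swapping them. Your case analysis therefore proves only the ``without inversions'' conclusion and not the embedding statement.

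The paper's proof reduces instead to the correct condition --- no $f\in\GG^\mu$ sends a non-separating $\gm_0$ to a disjoint non-isotopic non-separating $\gm_1$ --- and then argues that such an $f$ would force a singularity: by Theorem~\ref{monodromy coefficients} the cover $\ccM^\mu\ra\ccM_{g,n}$ ramifies over the divisor of irreducible nodal curves, and Proposition~2.1 of \cite{B-P} shows that ramification together with such an $f$ produces a singular point on the $\{\gm_0,\gm_1\}$-stratum of $\ccM^\mu$. Your toric analysis in (b) captures the ramification part of this but not the mechanism by which $f$ (which is \emph{not} in the local monodromy lattice $E_{\Sg(C)}$) creates the singularity; this is exactly what the cited \cite{B-P} argument supplies. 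Finally, your appeal to Lemma~\ref{self-intersection} is misplaced: that lemma concerns the embedded locus $\ccM_{G_K}\subset\ccM(S_K)$ and the injectivity of $\aut(C)\ra\mathrm{GL}(H_1(C,\Z/m))$ for curves on that locus, which has no direct bearing on an arbitrary smooth level structure $\ccM^\mu$.
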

\begin{proof}By arguments similar to those in the proof of Proposition~\ref{without rotations}, 
one is reduced to show that, for any two disjoint, non-isotopic, non-separating s.c.c.'s ${\gm}_0$ 
and ${\gm}_1$ on $S_{g,n}$, there is no $f\in\GG^\ld$ such that $f({\gm}_0)={\gm}_1$.

By Theorem~\ref{monodromy coefficients}, the natural morphism $\ccM^\mu\ra\ccM_{g,n}$ ramifies 
over the divisor of nodal irreducible curves. But, in 
the proof of Proposition~2.1 \cite{B-P}, it was shown that an $f\in\GG^\mu$ with the above
properties would then yield a singularity in the stratum of the D--M boundary of $\ccM^\mu$, 
corresponding to the simplex $\{\gm_0,\gm_1\}$, against our hypotheses. 
Hence, there is no such an $f\in\GG^\mu$ and, a fortiori, in $\GG^\ld$.

\end{proof}

\begin{remark}\label{simplicial nerve}Let $\GG^\ld$ be a level of $\GG_{g,n}$ satisfying
the hypotheses of either Proposition~\ref{without rotations} or Proposition~\ref{no self-intersection}. 
Then, the simplicial set $C^\ld(S_{g,n})_\bt$ of Definition~\ref{nervedef} can be naturally realized
as a quotient in the category of simplicial sets.
Let us order the vertices of the simplicial complex $C(S_{g,n})$ compatibly with 
the action of $\GG^\ld$ and let $C(S_{g,n})_\bt$ be the simplicial set associated to the simplicial 
complex $C(S_{g,n})$ and this ordering of its vertices. The finite simplicial set 
$C^\ld(S_{g,n})_\bt$ is then naturally isomorphic to the quotient of $C(S_{g,n})_\bt$ 
by the simplicial action of the level $\GG^\ld$. 
\end{remark}

\section{The D--M boundary of abelian level structures}\label{abelian boundary structure}
In this section, we are going to describe the boundary components of abelian level
structures in terms of other simple geometric levels. Let us recall the statement of 
Lemma~3.4 \cite{mod}:

\begin{lemma}\label{comparison lemma}\begin{enumerate}
\item Let $\GG^\ld$ be a level of $\GG_{g,n}$ such that the branch locus of the covering
$\ccM^\ld\ra\ccM_{g,n}$ is contained in the boundary divisor of nodal irreducible curves. Let 
$p\co\Gamma_{g,n}\ra\Gamma_{g,n-1}$ be the epimorphism induced filling in $P_n$ on $S_{g,n}$ and 
let $\ccM^{\ld'}$ be the level structure over $\ccM_{g,n-1}$ associated to the level $p(\Gamma^\ld)$ of 
$\GG_{g,n-1}$. Then, it holds $\pi_1(\ccM^\ld)=\pi_1(\ccM^{\ld'})$. 
\item Let $\GG^{\ld_1}\leq\GG^{\ld_2}$ be two levels of $\GG_{g,n}$ whose associated level 
structures satisfy the hypothesis in the above item. If the natural morphism 
$\ccM^{\ld_1}\ra\ccM^{\ld_2}$ is {\'e}tale and, with the same notations as above, it holds
$p(\Gamma^{\lambda_1})=p(\Gamma^{\lambda_2})$, then $\GG^{\ld_1}=\GG^{\ld_2}$.
\end{enumerate}
\end{lemma}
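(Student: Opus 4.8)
\textbf{Proof proposal for Lemma~\ref{comparison lemma}.}

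The plan is to treat both items through a single mechanism: comparing the monodromy of the family of curves over a level structure with the monodromy of the family obtained by filling in the last marked point $P_n$. For item (i), the inclusion $\cM^\ld\hookra\ccM^\ld$ gives a surjection $\pi_1(\cM^\ld)\tura\pi_1(\ccM^\ld)$, and by purity the kernel is generated by the loops around the components of $\dd\cM^\ld$; by the hypothesis on the branch locus, the only such loops that survive (i.e. map nontrivially) are those lying over the divisor of nodal irreducible curves. On the other hand, the fibration forgetting $P_n$ identifies the universal $(n-1)$-punctured curve $\cC\ra\cM_{g,n-1}$ with $\cM_{g,n}$, and the loop around the nodal-irreducible divisor is, up to conjugacy, exactly the loop around a puncture in a fiber $\cC_a\cong S_{g,n-1}$ — equivalently, the image under the monodromy of a simple loop $u_n$ around $P_n$ in $\Pi_{g,n-1}$, i.e. the inner automorphism by (a peripheral element lifting) $u_n$. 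First I would make this correspondence precise using the standard presentation of $\Pi_{g,n}$ recalled in \S\ref{levels} and the short exact sequence $1\ra\Pi_{g,n-1}\ra\GG_{g,n}\ra\GG_{g,n-1}\ra 1$ (the Birman exact sequence), whose kernel is $\Pi_{g,n-1}$ acting by inner automorphisms. Then $\pi_1(\ccM^\ld)$ is the quotient of $\GG^\ld$ by the normal closure of the Dehn twist $\tau_{\gm}$ around a curve $\gm$ bounding a disc containing only $P_n$ (the vanishing cycle of the nodal-irreducible degeneration that separates $P_n$ onto a rational tail), and this twist maps under $p$ to the trivial element of $\GG_{g,n-1}$. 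Therefore $p$ factors through $\pi_1(\ccM^\ld)$, giving a surjection $\pi_1(\ccM^\ld)\tura\pi_1(\ccM^{\ld'})$; conversely, since $\ccM^{\ld'}$ is the level structure attached to $p(\GG^\ld)$ and the fiber of $\cC\ra\cM_{g,n-1}$ kills precisely the peripheral generator $u_n$ modulo conjugacy, one checks the two groups have the same order (equivalently the same index in the respective ambient groups, using $[\GG_{g,n}:\GG^\ld]=[\GG_{g,n-1}:p(\GG^\ld)]\cdot[\ker p:\ker p\cap\GG^\ld]$ and that the latter factor is exactly the order of the quotient $\pi_1(\cC_a)/(\pi_1(\cC_a)\cap\GG^\ld)$, which is what is collapsed when passing to $\ccM^\ld$). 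This gives $\pi_1(\ccM^\ld)=\pi_1(\ccM^{\ld'})$.

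For item (ii), I would run the above identification for both $\GG^{\ld_1}$ and $\GG^{\ld_2}$ simultaneously. By hypothesis the morphism $\ccM^{\ld_1}\ra\ccM^{\ld_2}$ is {\'e}tale, so $\pi_1(\ccM^{\ld_1})\leq\pi_1(\ccM^{\ld_2})$ with index $[\GG^{\ld_2}:\GG^{\ld_1}]$ divided by the index of the sub-lattice of Dehn twists around the nodal-irreducible divisor that is killed in $\ccM^{\ld_1}$ relative to that killed in $\ccM^{\ld_2}$; {\'e}taleness forces these two twist-lattices to coincide, i.e. $\GG^{\ld_1}$ and $\GG^{\ld_2}$ contain the \emph{same} powers of $\tau_\gm$. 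Combined with $p(\GG^{\ld_1})=p(\GG^{\ld_2})$ and the Birman sequence, an element $f\in\GG^{\ld_2}$ satisfies $p(f)\in p(\GG^{\ld_1})$, so $f=f'\cdot \inn(x)$ for some $f'\in\GG^{\ld_1}$ and $x\in\Pi_{g,n-1}$; then $\inn(x)\in\GG^{\ld_2}$, and one must show $\inn(x)\in\GG^{\ld_1}$. Here the point is that $\GG^{\ld_2}\cap\Pi_{g,n-1}$ (inner automorphisms by $\Pi_{g,n-1}$ lying in the level) and $\GG^{\ld_1}\cap\Pi_{g,n-1}$ agree: both are controlled by which conjugacy class of the peripheral loop $u_n$ survives, which is governed precisely by the twist-lattice around the nodal-irreducible divisor that we just argued is the same. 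Hence $\inn(x)\in\GG^{\ld_1}$, so $f\in\GG^{\ld_1}$, and $\GG^{\ld_1}=\GG^{\ld_2}$.

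The main obstacle I anticipate is the bookkeeping that translates ``the branch locus of $\ccM^\ld\ra\ccM_{g,n}$ lies in the nodal-irreducible divisor'' into the group-theoretic statement ``the only relations imposed in passing from $\GG^\ld$ to $\pi_1(\ccM^\ld)$ come from powers of the single Dehn twist $\tau_\gm$ whose vanishing cycle cuts off $P_n$ on a rational tail.'' This requires care because the nodal-irreducible divisor has many local branches corresponding to different non-separating vanishing cycles, and one must verify that the covering $\ccM^\ld\ra\ccM_{g,n}$ is unramified along all of them except the one relevant to filling in $P_n$ — or rather, that after pushing down via $p$ all ramification along \emph{any} nodal-irreducible branch is accounted for by the kernel $\Pi_{g,n-1}$ of $p$. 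Concretely one should use that the local monodromy around a non-separating nodal-irreducible degeneration is a single Dehn twist, that such a twist is conjugate (in $\GG_{g,n}$) into $\Pi_{g,n-1}$ exactly when its vanishing cycle bounds a once-punctured disc around $P_n$, and that the purity/blow-up description of $\dd\cM^\ld$ in \S\ref{simplicial} lets one read off $\pi_1(\ccM^\ld)$ as the quotient of $\GG^\ld$ by the normal subgroup generated by those local monodromies that happen to land in $\GG^\ld$. Once this dictionary is set up cleanly, both parts are short; I would therefore spend most of the write-up establishing it and then deduce (i) and (ii) as above. (This is Lemma~3.4 of \cite{mod}, whose proof may simply be cited, but the sketch above indicates the argument.)
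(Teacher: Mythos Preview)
The paper does not prove this lemma at all: it simply recalls the statement of Lemma~3.4 of \cite{mod} and uses it. You correctly note at the end that a citation suffices, and on that basis your proposal is acceptable.

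However, the sketch you give before that citation is built on a genuine geometric misidentification, and if you tried to turn it into an actual proof it would not go through. You repeatedly describe the relevant monodromy as ``the Dehn twist $\tau_\gm$ around a curve $\gm$ bounding a disc containing only $P_n$ (the vanishing cycle of the nodal--irreducible degeneration that separates $P_n$ onto a rational tail)''. Two things are wrong here. First, a curve on $S_{g,n}$ bounding a disc that contains only the puncture $P_n$ is \emph{peripheral}, so $\tau_\gm$ is already trivial in $\GG_{g,n}$; it cannot be the source of any relation. Second, ``nodal irreducible'' and ``$P_n$ on a rational tail'' are \emph{different} boundary strata: the divisor of nodal irreducible curves is $\Delta_0$, parametrized by \emph{non-separating} s.c.c.'s (contracting such a $\gm$ gives a one-component curve with a self-node), whereas a rational tail carrying $P_n$ lies in a \emph{separating} divisor $\Delta_{0,\{i,n\}}$. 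Your later claim that a non-separating Dehn twist is conjugate into $\ker p=\Pi_{g,n-1}$ ``exactly when its vanishing cycle bounds a once-punctured disc around $P_n$'' is likewise empty, since no non-separating curve bounds a disc.

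The correct dictionary is the opposite of the one you set up. In the universal-curve picture $\cM_{g,n}\to\cM_{g,n-1}$, the fibre is $S_{g,n-1}$, and the peripheral loops $u_1,\dots,u_{n-1}$ of that fibre become, upon compactification, small loops around the \emph{separating} divisors $\Delta_{0,\{i,n\}}$ (collision of $P_n$ with $P_i$), not around $\Delta_0$. The hypothesis ``branch locus contained in the divisor of nodal irreducible curves'' therefore says precisely that $\ccM^\ld\to\ccM_{g,n}$ is \'etale over every separating boundary component, i.e.\ that every separating Dehn twist already lies in $\GG^\ld$. It is this fact, together with the observation that point-pushing maps (the elements of $\ker p=\Pi_{g,n-1}$) can be expressed in $\GG_{g,n}$ as products of Dehn twists, that drives the comparison of $\pi_1(\ccM^\ld)$ with $\pi_1(\ccM^{\ld'})$; the argument in \cite{mod} is organized around this. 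If you want to keep your own sketch rather than just citing, you should rebuild it on this corrected correspondence.
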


Let $N$ be the kernel of the epimorphism $\Pi_{g-1,n+2}\tura\Pi_{g-1,2}$, induced
filling in the punctures $P_1,\ldots, P_n$. Let us then define the $\GG_{g-1,n+2}$-invariant 
normal subgroup of $\Pi_{g-1,n+2}$:
$$\Pi^{(m)_0}:=N\cdot\Pi_{g-1,n+2}^{[2],m}.$$
Let $\GG^{(m)_0}\unlhd\GG_{g-1,n+2}$ be the associated geometric level.
For $n=0$, it holds $\GG^{(m)_0}=\GG^{[2],m}$. 

\begin{theorem}\label{abelian boundary}
For $2g-2+n>0$, let $\ccM_{g,n}^{(m)}$ be an abelian level structure of order $m\geq 2$.
\begin{enumerate}
\item Let $\alpha$ be a separating circle on $S_{g,n}$, such that $S_{g,n}\ssm\alpha\cong
S_{g_1,n_1+1}\amalg S_{g_2,n_2+1}$, and let
$\beta_\alpha^{(m)}:\delta_\alpha^{(m)}\ra\ccM_{g,n}^{(m)}$ be the associated boundary map. 
Then, the morphism $\beta_\alpha^{(m)}$ is an embedding and there is a natural isomorphism
$\delta_\alpha^{(m)}\cong\ccM_{g_1,n_1+1}^{(m)}\times\ccM_{g_2,n_2+1}^{(m)}$.
\item Let $\gamma$ be a non-separating circle on $S_{g,n}$ and
$\beta_\gm^{(m)}:\delta_\gm^{(m)}\ra\ccM_{g,n}^{(m)}$ be the associated boundary map. Then,
the morphism $\beta_\gm^{(m)}$, for $g>2$, is never an embedding, and, with the above
notations, there is a natural isomorphism $\delta_\gm^{(m)}\cong\ccM_{g-1,n+2}^{(m)_0}$.
\end{enumerate}
\end{theorem}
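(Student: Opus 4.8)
The plan is to analyze the boundary map $\beta_\sigma^{(m)}\co\delta_\sigma^{(m)}\ra\ccM_{g,n}^{(m)}$ attached to a one-circle simplex $\sigma$ via the general machinery of Section~\ref{simplicial}, identifying $\pi_1(\delta_\sigma^{(m)})$ and $\pi_1(\Im\beta_\sigma^{(m)})$ with the appropriate subgroups of $\GG_{\vec\sigma}$ and $\GG_\sigma$, and then computing the induced level on the factor mapping class group(s). For part (1), $\alpha$ is separating, so $\GG_\alpha=\GG_{\vec\alpha}$ (no element of $\GG(m)$ with $m\ge 2$ can invert a separating circle, by the argument in the proof of Proposition~\ref{without rotations}, nor can it swap the two sides since that acts nontrivially on homology); hence $\dot\beta_\alpha^{(m)}$ is injective, and by log-\'etale base change and normality $\beta_\alpha^{(m)}$ is an embedding. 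It then remains to identify $\pi_1(\delta_\alpha^{(m)})=\GG(m)\cap\GG_{\vec\alpha}$ under the surjection $\GG_{\vec\alpha}\tura\GG_{g_1,n_1+1}\times\GG_{g_2,n_2+1}$: one checks that an element of $\GG_{\vec\alpha}$ lies in $\GG(m)$ precisely when each of its two restrictions acts trivially on $H_1$ of the corresponding closed-up subsurface, i.e. the image is $\GG(m)_{g_1,n_1+1}\times\GG(m)_{g_2,n_2+1}$. Using $H_1(S_g,\Z/m)=H_1(\overline S_1,\Z/m)\oplus H_1(\overline S_2,\Z/m)$ (the vanishing cycle $\alpha$ being zero in homology) makes this a direct-sum bookkeeping, and Lemma~\ref{comparison lemma}(1) handles the extra puncture $P_{n+1}$ appearing when one passes from $\delta_\alpha^{(m)}$ to the product of abelian level structures.

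For part (2), $\gamma$ is non-separating, so $S_{g,n}\ssm\gamma\cong S_{g-1,n+2}$ and the relevant surjection is $\GG_{\vec\gamma}\tura\GG_{g-1,n+2}$ with kernel $\Z\cdot\tau_\gamma$. Here the stabilizer $\GG_\gamma$ strictly contains $\GG_{\vec\gamma}$: there exist mapping classes reversing the orientation of $\gamma$, and these can be chosen inside $\GG(m)$ (reflecting the handle while fixing a symplectic complement acts trivially on $H_1(S_g,\Z/m)$), which is why $\beta_\gamma^{(m)}$ fails to be an embedding for $g>2$ — the stratum map has a nontrivial deck group $\Z/2$ coming from the image of $\GG(m)\cap\GG_\gamma$ in $\Sigma_{\{\gamma\}}\{\pm\}$. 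The substantive point is the identification $\delta_\gamma^{(m)}\cong\ccM_{g-1,n+2}^{(m)_0}$: one must show that the image of $\GG(m)\cap\GG_{\vec\gamma}$ in $\GG_{g-1,n+2}$ is exactly the geometric level $\GG^{(m)_0}$ associated to $\Pi^{(m)_0}=N\cdot\Pi_{g-1,n+2}^{[2],m}$. An element of $\GG_{\vec\gamma}$ acts trivially on $H_1(S_g,\Z/m)$ iff the induced automorphism of $H_1(\overline S_{g-1},\Z/m)$ is trivial \emph{and} it fixes (up to the $\Z/m$-ambiguity absorbed by $\tau_\gamma$) the two homology classes of the punctures $P_{n+1},P_{n+2}$ created by cutting; translating "acts trivially on $H_1(\overline S_{g-1},\Z/m)$ and fixes the classes of $P_{n+1},P_{n+2}$" into a condition on $\Pi_{g-1,n+2}$ gives precisely the subgroup $\Pi_{g-1,n+2}^{[2],m}$ enlarged by $N$ (the part of $\Pi_{g-1,n+2}$ invisible after filling in $P_1,\dots,P_n$), which is the definition of $\Pi^{(m)_0}$; the case $n=0$ degenerates to $\GG^{[2],m}$ as noted. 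Lemma~\ref{comparison lemma}(2) is then invoked to pin down the level uniquely.

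I expect the main obstacle to be part (2): precisely matching the "filled-in homology plus marked-point classes" description of $\GG(m)\cap\GG_{\vec\gamma}$'s image with the algebraically-defined level $\GG^{(m)_0}$, and in particular keeping careful track of how the Dehn twist $\tau_\gamma$ (the kernel of $\GG_{\vec\gamma}\tura\GG_{g-1,n+2}$) interacts with the orientation classes of the two new punctures — this is exactly where the subgroup $N$ and the passage to $\Pi_{g-1,n+2}^{[2],m}$ rather than $\Pi_{g-1,n+2}^{(m)}$ enter, and it is the place where the earlier erroneous claims in \cite{PFT} were located, so the non-embedding phenomenon for $g>2$ must be handled with care rather than glossed over.
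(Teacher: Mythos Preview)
Your plan for part (i) is essentially correct and close to the paper's argument, though the paper uses a direct moduli-theoretic interpretation of $\wt{\cM}_{g,n}^{(m)}$ (curves of compact type with a symplectic isomorphism $H_1(C,\Z/m)\cong(\Z/m)^{2g}$) rather than invoking Lemma~\ref{comparison lemma}; either route works.

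Part (ii), however, contains a genuine error in the mechanism for non-embedding. You claim there exist elements of $\GG(m)$ reversing the orientation of the non-separating curve $\gm$, obtained by ``reflecting the handle while fixing a symplectic complement''. This is false for $m\geq 3$: such a map sends $[\gm]\mapsto -[\gm]$ in $H_1(S_g,\Z/m)$, which is nontrivial unless $2[\gm]\equiv 0$, impossible for a primitive class when $m\geq 3$. Indeed, Proposition~\ref{without rotations} already shows that $\GG(m)$ acts without inversions for $m\geq 3$, so $\GG(m)\cap\GG_\gm=\GG(m)\cap\GG_{\vec\gm}$ and the \emph{open} stratum map $\dot\beta_\gm^{(m)}$ is injective. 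The actual obstruction to $\beta_\gm^{(m)}$ being an embedding is a self-intersection at the boundary: for $g>2$ there is a cut pair $\{\gm,\gm'\}$, and by Proposition~6.7 of \cite{Putman} some $f\in\GG(m)$ sends $\gm$ to $\gm'$, forcing the closed boundary component to meet itself along the codimension-$2$ stratum of $\{\gm,\gm'\}$.

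Your outline for the identification $\delta_\gm^{(m)}\cong\ccM_{g-1,n+2}^{(m)_0}$ is also too optimistic. The inclusion $\pi_1(\od_\gm^{(m)})\leq\GG^{(m)_0}$ does follow from the homological translation you describe, but the reverse inclusion is not obtained by a direct computation. The paper instead applies Lemma~\ref{comparison lemma}(ii): one fills in $P_{n+2}$, checks that $p(\GG^{(m)_0})=\GG(m)$ in $\GG_{g-1,n+1}$, and then uses part (i) of the theorem to locate inside $\delta_\gm^{(m)}$ a substratum isomorphic to $\cM_{0,3}\times\cM_{g-1,n+1}^{(m)}$, which forces $p(\pi_1(\od_\gm^{(m)}))\geq\GG(m)$ as well. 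The remaining \'etaleness and branch-locus hypotheses of Lemma~\ref{comparison lemma} are verified by reducing to $n=0$ (where $\GG^{(m)_0}=\GG^{[2],m}$) and quoting the local monodromy computation from \cite{P}. Your sketch mentions Lemma~\ref{comparison lemma}(ii) but omits both the use of part (i) and this external input, without which the equality of levels is not established.
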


\begin{proof}The homology $H_1(C,\Z/m)$ of a stable $n$-pointed genus 
$g$ curve of compact type $C$ has a natural structure of non-degenerate symplectic free 
$\Z/m$-module of rank $2g$ and the decomposition of $C$ in irreducible components determines 
a symplectic decomposition of this module. 

Let $\wt{\cM}_{g,n}^{(m)}$ the substack of $\ccM_{g,n}^{(m)}$ parametrizing curves of compact type.
Then, $\wt{\cM}_{g,n}^{(m)}$ is the moduli space of stable, $n$-pointed, genus $g$ curves of
compact type endowed with a symplectic isomorphism 
$H_1(C,\Z/m)\sr{\sim}{\ra}((\Z/m)^{2g},\langle\_,\_\rangle)$, where $\langle\_,\_\rangle$ is the 
standard symplectic form. It is also clear that the clutching morphism defines an embedding of
the product $\wt{\cM}_{g_1,n_1+1}^{(m)}\times\wt{\cM}_{g_2,n_2+1}^{(m)}$ inside 
$\wt{\cM}_{g,n}^{(m)}$. Since the boundary map $\delta_\alpha^{(m)}\ra\ccM_{g,n}^{(m)}$ is also 
an embedding, by the proof of 
Proposition~\ref{without rotations}, the first part of the theorem follows. 
 
As to ii.), from Proposition~6.7 in \cite{Putman}, it follows that, if $\gm$ and $\gm'$ form a cut pair 
on $S_{g,n}$, then there is an $f\in\GG(m)$ such that $f(\gm)=\gm'$. By the description of the
D--M boundary of level structures given in Section~\ref{simplicial}, we then know that such cut pair
determines a self-intersection on the boundary component of $\ccM_{g,n}^{(m)}$ parametrized
by $\gm$. 

Let us then prove the second part of ii.).
The D--M stack $\delta_\gm^{(m)}$ is normal, hence naturally isomorphic to the compactification
of the level structure defined by the \'etale Galois covering $\od_\gm^{(m)}\ra\cM_{g-1,n+2}$. 
By Section~\ref{simplicial} and Theorem~\ref{monodromy coefficients}, there is a short exact sequence
$$1\ra\Z\cdot\tau_\gamma^m\ra\GG(m)\cap \GG_{\vec{\gm}}\sr{q}{\ra}\pi_1(\od_\gm^{(m)})\ra 1.$$

Fix a homeomorphism $h\co S_{g,n}\ssm\gamma\ra S_{g-1,n+2}$ mapping the punctures of
$S_{g,n}$ in the first $n$ punctures of $S_{g-1,n+2}$. This induces a monomorphism
$h_\sharp\co\Pi_{g-1,n+2}\hookra\Pi_{g,n}$ and then a monomorphism
$$h_\sharp\co\left.\Pi_{g-1,n+2}\right/\Pi^{(m)_0}\hookra H_1(S_g,\Z/m).$$ 

Let $f\in\GG(m)\cap \GG_{\vec{\gm}}$ then $f$ acts trivially on $H_1(S_g,\Z/m)$. Therefore, $f$ acts
trivially also on the subgroup $\Pi_{g-1,n+2}/\Pi^{(m)_0}$, hence
$q(f)\in\GG^{(m)_0}$ and the level $\pi_1(\od_\gm^{(m)})$  is contained in $\GG^{(m)_0}$.
To prove that the two levels are the same, let us use Lemma~\ref{comparison lemma}. 

Denote by $p\co\GG_{g-1,n+2}\ra\GG_{g-1,n+1}$ the
epimorphism induced filling in $P_{n+2}$ and by $\GG^\ld$ the level in $\GG_{g-1,n+1}$ given 
by the subgroup $p(\pi_1(\od_\gm^{(m)}))$. We claim that $\GG^\ld=p(\GG^{(m)_0})$. 

It is easy to check that $p(\GG^{(m)_0})=\GG(m)$. In particular, it holds $\GG^\ld\leq\GG(m)$. 
For the reverse inclusion, let us observe that, by the first part of the theorem, $\delta_\gm^{(m)}$ 
contains a stratum of $\ccM_{g,n}^{(m)}$ isomorphic to $\cM_{0,3}\times\cM_{g-1,n+1}^{(m)}$. 

Therefore, the natural morphism $\delta_\gm^{(m)}\ra\ccM_{g-1,n+1}^\ld$ induces by restriction an 
\'etale morphism $\cM_{g-1,n+1}^{(m)}\ra\cM_{g-1,n+1}^\ld$. This implies that $\GG(m)\leq\GG^\ld$
and then $\GG^\ld=p(\GG^{(m)_0})$.

It remains to prove that the morphism 
$\delta_\gm^{(m)}\ra\ccM_{g-1,n+2}^{(m)_0}$ is \'etale and that the branch loci of both levels
over $\ccM_{g-1,n+2}$ are contained in the divisor $\beta_0(\ccM_{g-2,n+4})$. By the first part 
of the theorem, it is enough to prove both statements for $n=0$. Since, as noticed above, in 
this case it holds $\GG^{(m)_0}=\GG^{[2],m}$, the conclusion follows from the computation of 
the local monodromy coefficients for these levels (cf. Theorem~3.3.3 in \cite{P}).

\end{proof}

\section[The nerve of the boundary of abelian level structures]{The nerve of the D--M boundary 
of abelian level structures}\label{nerve}

There is no obvious way to extend the description of codimension $1$ strata given in
Theorem~\ref{abelian boundary} to higher codimension. At any rate, in this section, we
are going to describe the nerve of the D--M boundary of the abelian levels 
in an explicit if complicated way. 

In Section~\ref{simplicial}, we saw that, for all $m\geq 2$, the nerve of the boundary of 
$\ccM_{g,n}^{(m)}$ is the simplicial set $C^{(m)}(S_{g,n})_\bt$ associated 
to the quotient cellular complex $|C(S_{g,n})|/\GG(m)$. 

As a preliminary step, we need to describe the quotient of the curve complex $C(S_{g,n})$ 
by the action of the Torelli group $\cT_{g,n}$, which is the kernel of the natural representation
$\GG_{g,n}\ra\Sp_{2g}(\Z)$. Since the Torelli group $\cT_{g,n}$ is contained in the abelian level 
$\GG(3)$, it acts without inversions on the curve complex. So, let $C(S_{g,n})_\bt$ be the 
simplicial set associated to the curve complex and an order of 
its vertices compatible with the action of the Torelli group $\cT_{g,n}$. We then define 
$C^{(0)}(S_{g,n})_\bt$ to be the quotient simplicial set $C(S_{g,n})_\bt/\cT_{g,n}$.

Let us recall the definition of a graph of groups and of the fundamental group of a graph of groups 
(cf. Ch. I, \S 5.1 \cite{Serre}). For a graph $Y$, let us denote
by $v(Y)$ its vertex set, by $e(Y)$ its set of unoriented edges and by $\vec{e}(Y)$ its set of oriented 
edges. For an oriented edge $\vec{e}\in\vec{e}(Y)$ we denote by $\cev{e}\in\vec{e}(Y)$ 
the same edge with the opposite orientation and by $e\in e(Y)$ the underlying unoriented edge.

A graph of groups $(G,Y)$ is given by a graph $Y$ and the data of groups $G_v$ and $G_e$, for 
every vertex $v\in v(Y)$ and every unoriented edge $e\in e(Y)$, together with monomorphisms 
$s_{\vec{e}}\co G_e\hookra G_{s(\vec{e})}$ and $t_{\vec{e}}\co G_e\hookra G_{t(\vec{e})}$, for 
every oriented edge $\vec{e}\in\vec{e}(Y)$, where $s(\vec{e})$ and $t(\vec{e})$ are 
the vertices of $e$ in the source and in the target, respectively, of the given orientation. 

The fundamental group $\pi_1(G,Y)$ of the graph of groups $(G,Y)$ is the group with presentation
$\langle P|R\rangle$, where $P$ is the free product of all vertex groups $G_v$ and of the free
group on the oriented edges $\vec{e}(Y)$, while $R$ is the set of relations $\cev{e}=\vec{e}^{-1}$ 
and $\vec{e}\cdot t_{\vec{e}}(a)\cdot\cev{e}=s_{\vec{e}}(a)$, 
for all $e\in e(Y)$ and $a\in G_e$.

\begin{definition}\label{graph deco}For $m\neq 1$ a non-negative integer and $g\geq 1$,
let $H_m:=H_1(S_g,\Z/m)$ (in particular, $H_0:=H_1(S_g,\Z)$), endowed with the symplectic 
form $\langle\_,\_\rangle$ induced by intersection of cycles on $S_g$. Let
$\cP_n$ be the set of subsets of the set $\{1,\ldots,n\}$.

For $2g-2+n>0$, an {\it $n$-marked graph decomposition} $(G,Y,d)$ of $H_m$ is a graph of groups
$(G,Y)$, whose vertex and edge groups $\{G_\sg\}$ are primitive subgroups of $H_m$, endowed 
with a marking $d\co v(Y)\ra\cP_n$. We require that the following properties be satisfied.

\begin{enumerate}
\item There is an isomorphism $\phi\co\pi_1(G,Y)^{\mathrm{ab}}\otimes\Z/m\sr{\sim}{\ra}H_m$ such 
that it holds $\phi|_{G_\sg}=\mathrm{id}_{G_\sg}$, for all simplices $\sg$ of the graph $Y$, where 
the superscript "ab" is for abelianization.
\item It holds $\bigcup_{v\in v(Y)}d(v)=\{1,\ldots,n\}$ and 
$d(v)\cap d(v')=\emptyset$, for $v\neq v'\in v(Y)$.
\item Let $E$ be the subgroup of $H_m$ generated by all edge groups of $(G,Y)$. Then, for all 
vertices $v$ of $Y$, it holds $\langle x,y\rangle=0$, for all $x\in E$ and $y\in G_v$. In particular, 
$E$ is an isotropic sub-lattice of $H_m$.
\item All edge groups are cyclic. Let $\{e_i\}_{i\in I}$ be a set of unoriented edges. 
Then, if $Y\ssm\cup_{i\in I}e_i$
is connected, the associated set of edge groups $\{G_{e_i}\}_{i\in I}$ spans a primitive
subgroup of $H_m$ of rank $|I|$. If instead $\{e_i\}_{i\in I}$ is minimal
for the property that $Y\ssm\cup_{i\in I}e_i$ is disconnected, there is a set $\{u_i\}_{i\in I}$ 
of generators of these edge groups such that it holds $\sum_{i\in I}u_i=0$.
\item For $v$ a vertex of $Y$, let $\mathrm{star}(v)$ be its star in $Y$ and let $|\mathrm{star}(v)|$
be the number of edges contained in the star. Then, all vertex groups $G_v$, such that 
$|\mathrm{star}(v)|+|d(v)|\leq 2$, are non trivial and of rank $\geq 2$.
\item For $v$ a vertex of $Y$, let $E(v)$ be the subgroup of $G_v$ generated by the edge groups
it contains. Then, there is a decomposition $G_v=E(v)\oplus G_v'$, where $G_v'$ is a
symplectic, non-degenerate, possibly trivial, sub-module of $H_m$.
\end{enumerate}
The {\it rank} of an $n$-marked graph decomposition $(G,Y,d)$ is the cardinality $|e(Y)|$ 
of the set of unoriented edges of $Y$. 
We say that two $n$-marked graph decompositions $(G,Y,d)$ and $(G',Y',d')$ of $H_m$
are equivalent if there is an isomorphism $f\co Y\ra Y'$ of the underlying graphs such that, for all
$v\in v(Y)$, it holds $d'(f(v))=d(v)$ and, for every simplex $\sg$ of $Y$, it holds $G'_{f(\sg)}=G_\sg$. 
Let us denote by $[G,Y,d]$ the equivalence class of the $n$-marked graph decomposition 
$(G,Y,d)$. If $n=0$, we simply say that $(G,Y)$ is a graph decomposition of $H_m$ and denote
by $[G,Y]$ the corresponding equivalence class.
\end{definition}

A $k$-simplex $\sg\in C(S_{g,n})$ determines a partition $\amalg_{v\in V} S_v:=S_{g,n}\ssm\sg$ of 
the surface $S_{g,n}$. We can associate to $\sg$ the graph $(G,Y)$ of abelian subgroups of 
$H_1(S_g,\Z/m)$ whose vertex groups consist of the images of the natural homomorphisms
$H_1(S_v,\Z/m)\ra H_1(S_g,\Z/m)$, for $v\in V$, and whose edge groups are the cyclic subgroups of
$H_1(S_g,\Z/m)$ determined by the s.c.c.'s in $\sg$. The marking $d\co v(Y)=V\ra\cP_n$ is
then defined assigning to a vertex $v$ of the graph $Y$ the indices of the punctures on the corresponding
subsurface $S_v$ of $S_{g,n}$. 

In this way, it is associated to every simplex $\sg\in C(S_{g,n})$ 
an $n$-marked graph decomposition $(G,Y,d)$ of rank $k+1$.

Conversely, it is easy to check that, given an $n$-marked graph decomposition 
$(G,Y,d)$ of $H_m$ of rank $k+1$, there is a $k$-simplex $\sg\in C(S_{g,n})$ which has $(G,Y,d)$ 
for associated $n$-marked graph decomposition.

The partition of $S_{g,n}$ determined by a simplex $\sg\in C(S_{g,n})$ is refined by the partition 
determined by a simplex $\sg'$ such that $\sg\subset\sg'$. 

\begin{definition}\label{order} 
Let us define a partial order on the set of all $n$-marked graph decompositions of $H_m$, letting 
$(G',Y',d')\leq (G,Y,d)$ if the following conditions are satisfied:
\begin{enumerate}
\item the set of edge groups of $(G',Y',d')$ is a subset of the set of edge groups of $(G,Y,d)$;
\item for each vertex $v'$ of $Y'$, there is a connected subgraph of groups $(K,Z)$ of $(G,Y)$ 
such that $d'(v')=\bigcup_{v\in v(Z)}d(v)$ and there is an isomorphism
$\psi\co\pi_1(K,Z)^{\mathrm{ab}}\otimes\Z/m\sr{\sim}{\ra}G'_{v'}$ with the property that
$\psi|_{K_\sg}=\mathrm{id}_{K_\sg}$ for all simplices $\sg$ of the subgraph $Z$.
\end{enumerate}

The partial order so defined on the set of $n$-marked graph decompositions of $H_m$ 
is compatible with the equivalence relation defined above and thus induces a partial order on 
the set of equivalence classes of $n$-marked graph decompositions of $H_m$. 
\end{definition}

\begin{definition}\label{complex graph deco}
For $m\neq 1$ a non-negative integer, let $\cG^n(H_m)_\bt$ be the simplicial 
set whose semi-simplicial set of non-degenerate simplices is defined as follows.
\begin{enumerate}
\item The set of $k$-simplices is the set of equivalence classes of $n$-marked graph 
decompositions of $H_m$ of rank $k+1$. 
\item The $(k-1)$-faces of a $k$-simplex $[G,Y,d]$  are the equivalence 
classes of $n$-marked graph decompositions $(G',Y',d')\leq (G,Y,d)$ of rank $k$.  A face map 
$\dd_e\co [G,Y,d]\ra [G',Y',d']$ is assigned to the unoriented edge $e\in e(Y)$, if the set of edge 
groups of $[G',Y',d']$ coincides with the set of edge groups associated to the edges of $Y\ssm e$. 
\end{enumerate}
\end{definition}

\begin{theorem}\label{abelian nerve}For $2g-2+n>0$ and $g\geq 1$, there is a natural isomorphism 
of simplicial sets $\Psi_\bt\co C^{(0)}(S_{g,n})_\bt\ra\cG^n(H_0)_\bt$.
\end{theorem}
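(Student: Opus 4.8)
The plan is to exhibit the map $\Psi_\bt$ on non-degenerate simplices and check it respects faces and is a bijection in each degree. A $k$-simplex of $C^{(0)}(S_{g,n})_\bt$ is an orbit $\cT_{g,n}\cdot\sg$ of a $k$-simplex $\sg\in C(S_{g,n})$; to such a $\sg$ the discussion following Definition~\ref{complex graph deco} already attaches an $n$-marked graph decomposition $(G,Y,d)$ of $H_0$ of rank $k+1$, namely the graph of subsurface-homology images with cyclic edge groups cut out by the curves of $\sg$ and the marking $d$ recording which punctures sit on which piece. First I would verify that this assignment is $\cT_{g,n}$-invariant and, conversely, that two simplices with equivalent associated graph decompositions lie in the same Torelli orbit; this is the crux and I address it below. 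Granting that, set $\Psi_k(\cT_{g,n}\cdot\sg):=[G,Y,d]$.

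Next I would check the simplicial identities. The face maps on $C^{(0)}(S_{g,n})_\bt$ are induced by deleting one curve $\gm_e$ from $\sg$ (equivalently, forgetting one edge $e$ of $Y$); deleting $\gm_e$ merges the two subsurfaces $S_{s(e)},S_{t(e)}$ (or, if $e$ is a loop, regenerates a handle), and the homology image of the merged piece is exactly the abelianized fundamental group of the corresponding connected subgraph of groups tensored with $\Z/m$ — which is precisely the recipe in Definition~\ref{order}(2) for the face relation $\dd_e$ in $\cG^n(H_0)_\bt$. So $\Psi_\bt$ is a well-defined morphism of semi-simplicial sets; extending over degeneracies is automatic once the vertex orderings are chosen compatibly (Remark~\ref{simplicial nerve}, applicable since $\cT_{g,n}\le\GG(3)$ so acts without inversions). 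Surjectivity of each $\Psi_k$ is exactly the "conversely" assertion already noted after Definition~\ref{complex graph deco}: every $n$-marked graph decomposition of $H_0$ of rank $k+1$ is realized by some $\sg\in C(S_{g,n})$, because the combinatorial data (a trivalent-or-better graph with the homological constraints of Definition~\ref{graph deco}) can be thickened to a decomposition of $S_{g,n}$ with the prescribed genera, marked points and homology classes.

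The main obstacle is injectivity of $\Psi_k$, i.e. that if $\sg,\sg'\in C(S_{g,n})$ have equivalent associated graph decompositions then $\sg'=f(\sg)$ for some $f\in\cT_{g,n}$. Here is how I would attack it. An equivalence of graph decompositions gives a graph isomorphism $Y\to Y'$ matching vertex and edge groups and markings; first realize it by \emph{some} mapping class $h\in\GG_{g,n}$ with $h(\sg)=\sg'$ — this uses the change-of-coordinates principle for curve systems, the matching of topological types of complementary pieces being forced by equality of the rank, the markings $d$, and the genera (the latter recovered from the homology data via Definition~\ref{graph deco}(6), the symplectic summand $G_v'$ having rank $2g_v$, together with $|\mathrm{star}(v)|$ and $|d(v)|$). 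It then remains to correct $h$ by an element of $\GG_{\vec{\sg'}}$ so that the product lies in $\cT_{g,n}$: the obstruction is the induced symplectic automorphism $h_*$ of $H_1(S_g,\Z)$, which by construction preserves every vertex and edge subgroup and acts as the identity on each of them, hence is the identity on the subgroup they generate. One must then show this subgroup is all of $H_1(S_g,\Z)$ (condition (i) of Definition~\ref{graph deco}, $\phi$ being an isomorphism), so $h_*=\mathrm{id}$ and $h\in\cT_{g,n}$ already — no correction is needed. Checking that $\phi$ in Definition~\ref{graph deco}(i) genuinely forces the vertex-and-edge groups to generate all of $H_0$, and that the identification $\phi|_{G_\sg}=\mathrm{id}$ pins down $h_*$ rather than merely constraining it, is the delicate bookkeeping I expect to occupy most of the argument; the Torelli-specific input (e.g. Johnson's description, or Putman-type results as used in Theorem~\ref{abelian boundary}) enters only to ensure no further separating-curve ambiguity survives, but since all data are homological this should reduce to linear algebra over $\Z$.
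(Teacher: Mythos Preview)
Your overall architecture --- define $\Psi_\bt$ via the subsurface-homology construction, check face compatibility, observe surjectivity, then prove injectivity by producing a Torelli element carrying $\sg$ to $\sg'$ --- matches the paper's. The divergence, and the gap, is in the injectivity step.

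You propose to pick any $h\in\GG_{g,n}$ with $h(\sg)=\sg'$ realizing the graph isomorphism, and then argue that $h_*$ already equals the identity on $H_0$ because it ``preserves every vertex and edge subgroup and acts as the identity on each of them'', and these generate $H_0$. Both assertions fail. First, $h_*$ does preserve each $G_v$ (since $h$ carries the piece $S_v$ to $S'_{f(v)}$ and both have image $G_v$ in $H_0$), but there is no reason $h_*|_{G_v}=\mathrm{id}$: the mapping class group of $S'_{f(v)}$ acts on its own homology through the full symplectic group, so $h_*|_{G_v}$ can be any automorphism of $G_v$ so realized. Second, even granting identity on every $G_v$ and $G_e$, these subgroups do \emph{not} generate $H_0$ when $H_1(Y)\neq 0$: condition~(i) of Definition~\ref{graph deco} says $\pi_1(G,Y)^{\mathrm{ab}}\cong H_0$, and the abelianization of a graph of groups has an extra free summand coming from $H_1(Y)$, not accounted for by the vertex groups. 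Concretely, if $\sg$ is a single non-separating curve $\gm$ on $S_g$, the lone vertex group is $[\gm]^\perp$, of corank~$1$ in $H_0$. So ``no correction is needed'' is unjustified, and the correction you would need lies outside the linear algebra you sketch.

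The paper handles this by a direct construction rather than a correct-after-the-fact argument. It extends the homology classes $\{a_0,\dots,a_k\}$ of the curves in $\sg$ to a generating set $\{a_0,\dots,a_s\}$ of $H_0$ that contains a standard generating set for each vertex group \emph{and} additional classes $a_{i_1},\dots,a_{i_t}$ projecting to a basis of $H_1(Y)$; it then lifts this generating set to two systems of s.c.c.'s $\{\gm_i\}$ and $\{\gm'_i\}$ on $S_{g,n}$ extending $\sg$ and $\sg'$, with matching homology classes and geometric-intersection numbers. Finally it builds homeomorphisms $\phi_y\co S_y\to S'_y$ piece by piece, respecting these curve systems, and glues them; the resulting $\phi$ sends $\gm_i\mapsto\gm'_i$ for all $i$, hence acts trivially on $H_0$. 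Your approach could be salvaged along similar lines --- correct $h$ on each piece so that it respects a prescribed curve system --- but doing so amounts to reproducing the paper's construction, and the dual classes $a_{i_j}$ witnessing $H_1(Y)$ are precisely the ingredient your sketch is missing.
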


\begin{proof}By definition, $C^{(0)}(S_{g,n})_k=C(S_{g,n})_k/\cT_{g,n}$.
From the above remarks, it follows that, for $k\geq 0$, there 
is a natural surjective map $\widetilde{\Psi}_k\co C(S_g)_k\ra\cG^n(H)_k$. It is easy to 
check the compatibility of $\widetilde{\Psi}_\bt$ with the face maps so that we get a map of 
simplicial sets. In order to prove that $\widetilde{\Psi}_\bt$ is an isomorphism,
we have to show that its fibers coincide with the orbits of the action of $\cT_{g,n}$ on 
$C(S_{g,n})_\bt$. 

Since the Torelli group $\cT_{g,n}$ acts trivially on the set of $n$-marked graph decompositions 
of $H_0$, for all $k\geq 0$, the map $\widetilde{\Psi}_k$ factors through the map:
$$\Psi_k\co\left.C(S_{g,n})_k\right/\cT_{g,n}\ra\cG^n(H_0)_k.$$

Let us then show that two simplices $\sg$ and $\sg'$ which define equivalent 
$n$-marked graph decompositions of $H$ of rank $k+1$ are in the same $\cT_{g,n}$-orbit. 
Let us choose a representative $(G,Y,d)$ of the equivalence class and let us identify the two 
isomorphic $n$-marked graph decompositions of $H$ with it.

Let $\sg=\{\gm_0,\ldots,\gm_k\}$ and $\sg'=\{\gm'_0,\ldots,\gm'_k\}$.
With suitable orientations, the s.c.c.'s in the simplices $\sg$ and 
$\sg'$ determine the same set of not necessarily linearly independent homology classes 
$\{a_0,\ldots,a_k\}$ of $H_0$. Let us then extend this set to a generating set $\{a_0,\ldots,a_s\}$  
of $H_0$ with the following properties.

\begin{itemize}
\item Each vertex group of $(G,Y,d)$ is naturally isomorphic to the homology of a subsurface of 
$S_g$, whose topological type is a punctured surface $S_{g',n'}$, with $2g'-2+n'>0$. 
Let us then define a standard set of generators $\{a_1,b_1,\ldots,a_{g'},b_{g'},w_1,\ldots,w_{n'}\}$ 
for the group $H_1(S_{g',n'},\Z)$, where, for $n'=1$, we omit $w_1$, to be a set of generators 
which can be lifted to a standard set of generators for the fundamental group (cf. \S \ref{levels}). 

The intersection $G_y\cap\{a_0,\ldots,a_s\}$ is then
a standard set of generators for the group $G_y$, for every vertex $y$ of $Y$.

\item There is a natural epimorphism $r\co\pi_1(G,Y)^{\mathrm{ab}}\tura H_1(Y,\Z)$.
The set $\{a_{i_1},\ldots,a_{i_t}\}$ of elements in $\{a_0,\ldots,a_s\}$ which do not belong 
to any vertex group of $(G,Y,d)$ is then projected by $r$ to a basis of $H_1(Y,\Z)$ such that 
$r(a_{i_j})$, for $j=1,\ldots,t$, is the cycle associated to a circuit in the graph $Y$ which does
not pass twice for the same vertex.
Moreover, it holds $\langle a_{i_j},a_l\rangle=0$, for all $j=1,\ldots,t$ and $l>k$.
\end{itemize}

By the second property, the set of homology classes $\{a_{i_1},\ldots,a_{i_t}\}$ can be lifted 
to a set of disjoint s.c.c.'s $\{\gm_{i_1},\ldots,\gm_{i_t}\}$ (respectively, 
$\{\gm'_{i_1},\ldots,\gm'_{i_t}\}$) such that, for $j=1,\ldots,t$, each $\gm_{i_j}$ (respectively, each 
$\gm'_{i_j}$) intersects a curve in $\sg$ (respectively, in $\sg'$) at most once. Let us extend 
these sets of disjoint s.c.c.'s to sets of s.c.c.'s
$\{\gm_0,\ldots,\gm_s\}$ and $\{\gm'_0,\ldots,\gm'_s\}$ which moreover satisfy:
\begin{itemize}
\item 
for the associated integral homology classes, it holds $[\gm_i]=[\gm_i']=a_i$, for $i=0,\ldots,s$;
\item it holds $|\gm_i\cap_g\gm_j|=|\gm_i'\cap_g\gm_j'|=|\langle a_i,a_j\rangle|$,  
for $i,j=0,\ldots,s$, where "$\cap_g$" denotes geometric intersection;
\end{itemize}

For a given vertex $y$ of $Y$, let $S_y$ and $S_y'$ be, respectively, the closures in $S_{g,n}$ of
the connected components of $S_{g,n}\ssm\sg$ and $S_{g,n}\ssm\sg'$ whose homology 
groups map onto the vertex group $G_y$. There is then a homeomorphism 
$\phi_y\co S_y\ra S_y'$ such that it holds $\phi_y(\gm_i\cap S_y)=\gm_i'\cap S_y'$, for $i=0,\ldots,s$.

Glueing along $\sg$ and $\sg'$ these homeomorphisms, for all connected components of 
$S_{g,n}\ssm\sg$ and $S_{g,n}\ssm\sg'$, we get a homeomorphism 
$\phi\co S_{g,n}\ra S_{g,n}$ such that $\phi(\gm_i)=\gm_i'$, for $i=0,\ldots,s$. This 
homeomorphism then acts trivially on $H_0$ and is such that $\phi(\sg)=\sg'$.

\end{proof}

The situation is a bit more complicated when we want to describe the finite simplicial sets 
$C^{(m)}(S_{g,n})_\bt$, for $m\geq 2$. In this case, the set of equivalence classes of 
$n$-marked graph decompositions of $H_m$ of rank $k+1$, in general, is not in bijective 
correspondence with the set of non-degenerate $k$-simplices of $C^{(m)}(S_{g,n})_\bt$.

The set $C^{(m)}(S_{g,n})_k$, for $k\geq 0$, is the quotient of the set 
$C^{(0)}(S_{g,n})_k\cong\cG^n(H)_k$, described in Theorem~\ref{abelian nerve}, by the action 
of the congruence subgroup $\Sp_{2g}(\Z,m)\cong\GG(m)/\cT_{g,n}$ of the symplectic 
group $\Sp_{2g}(\Z)$ and there is a natural surjective map $\cG^n(H)_\bt\tura\cG(H_m)_\bt$, 
which is invariant under the action of $\Sp_{2g}(\Z,m)$. Therefore, there is at least a natural 
surjective map of simplicial sets:
$$C^{(m)}(S_{g,n})_\bt\tura\cG^n(H_m)_\bt.$$

Let then $(G,Y,d)$ and $(G',Y',d')$ be two $n$-marked graph decompositions of $H_0$ which 
induce, reducing modulo $m$, equivalent $n$-marked graph decompositions of $H_m$, i.e. 
such that there is an isomorphism of graphs $f\co Y\ra Y'$ with the property that $d(v)=d'(f(v))$, 
for all $v\in v(Y)$, and it holds $G_\sg/m=G'_{f(\sg)}/m$, for all simplices $\sg$ of $Y$, where, 
for a subgroup $K$ of $H_0$, we denote by $K/m$ its image via the natural epimorphism 
$H_0\tura H_m$. 

The corresponding equivalence classes $[G,Y,d]$ and $[G',Y',d']$ of $n$-marked graph 
decompositions of $H_0$ are then in the same $\Sp_{2g}(\Z,m)$-orbit if and only if there is an 
$F\in\Sp_{2g}(\Z,m)$ 
such that $F(G_\sg)=G'_{f(\sg)}$. This happens if and only if there are standard symplectic basis 
$\{a_1,\ldots,a_{2g}\}$ and $\{a'_1,\ldots,a'_{2g}\}$ of $H_0$ with the following properties:
\begin{enumerate}
\item[a)] it holds $a_i\equiv a_i' \!\mod m$, for $i=1,\ldots,2g$;
\item[b)] the set $\{a_1,\ldots,a_{2g}\}$ (resp. $\{a'_1,\ldots,a'_{2g}\}$) contains generators for
the edge groups in a maximal set of linearly independent edge groups of $(G,Y)$ (resp. of $(G',Y')$);
\item[c)] for every vertex $v\in Y$, there is a set of indices $\{i_1,\ldots i_k\}$ 
such that it holds, simultaneously, $G_v\cap\{a_1,\ldots,a_{2g}\}=\{a_{i_1},\ldots,a_{i_k}\}$ and
$G'_{f(v)}\cap\{a'_1,\ldots,a'_{2g}\}=\{a'_{i_1},\ldots,a'_{i_k}\}$, and the projections of these 
intersections to the quotients $G_v/E(v)$ and $G'_{f(v)}/E(f(v))$, see
 vi.) Definition~\ref{graph deco}, generate these groups.
\end{enumerate}
By property a), the assignment $a_i\mapsto a'_i$, for $i=1,\ldots,2g$, defines an
$F\in\Sp_{2g}(\Z,m)$. Properties b) and c) make sure that $F(G_\sg)=G'_{f(\sg)}$, for $\sg$ a vertex
or an edge of $Y$.

Let, as above, $(G,Y,d)$ and $(G',Y',d')$ be two $n$-marked graph decompositions of $H_0$ which 
induce, reducing modulo $m$, equivalent $n$-marked graph decompositions of $H_m$, related by 
the isomorphism of graphs $f\co Y\ra Y'$.
Reduction modulo $m$ then defines, for every simplex $\sg$ of $Y$, epimorphisms
$\mu_\sg\co G_\sg\ra G_\sg/m$ and $\mu'_{f(\sg)}\co G'_{f(\sg)}\ra G'_{f(\sg)}/m$ of $\Z$-modules, 
where $G_\sg/m=G'_{f(\sg)}/m$ is a free primitive $\Z/m$-submodule of $H_m$.

Let us observe that, given a vertex $v$ of $Y$, any basis for a maximal symplectic non-degenerate 
free $\Z/m$-submodule of $G_v/m=G'_{f(v)}/m$ can be lifted at the same time, via the epimorphisms 
$\mu_v$ and $\mu'_{f(v)}$, to bases for maximal symplectic non-degenerate free $\Z$-submodules 
of $G_v$ and $G'_{f(v)}$, respectively.

On the other hand, given an edge $e$ of $Y$, for $m>3$, it is not necessarily the case that there is 
a generator for the edge group $G_e/m=G'_{f(e)}/m$ which can be lifted both to a generator of the 
edge group $G_e$, via the epimorphism $\mu_e$, and to a generator of the edge group $G'_{f(e)}$, 
via the epimorphism $\mu'_{f(e)}$.

However, if the last-mentioned condition is fulfilled for all edges, then there are 
bases $\{a_1,\ldots,a_{2g}\}$ and $\{a'_1,\ldots,a'_{2g}\}$ of $H_0$ which satisfy properties a), 
b) and c) with respect to the two given $n$-marked graph decompositions of $H_0$. 
This is the motivation for the following definitions:

\begin{definition}\label{marked graph}A framed, $n$-marked graph decomposition 
$(G,Y,d,\{\mu_e\})$ of $H_m$ is the data of an $n$-marked graph decomposition $(G,Y,d)$ of 
$H_m$ and, for every unoriented edge $e$ of $Y$, an epimorphism $\mu_e\co\Z\ra G_e$ 
(a {\it frame} for the edge group $G_e$).

Two framed, $n$-marked graph decompositions $(G,Y,d,\{\mu_e\})$ and 
$(G',Y',d',\{\mu'_e\})$ of $H_m$ are equivalent if there is an isomorphism $f\co Y\ra Y'$ of the 
underlying graphs such that it holds $d'(f(v))=d(v)$, for all $v\in v(Y)$, it holds $G_\sg=G'_{f(\sg)}$, 
for all simplices $\sg$ of $Y$, and it holds $\mu_{f(e)}(1)=\pm\mu_e(1)$, for every $e\in e(Y)$.
We then denote by $[G,Y,d,\{\mu_e\}]$ the equivalence class of $(G,Y,d,\{\mu_e\})$.
\end{definition}

It is easy to check that, given a framed, $n$-marked graph decomposition $(G,Y,d,\{\mu_e\})$ 
of $H_m$, there is an $n$-marked graph decomposition $(\tilde{G},Y,d)$ of $H_0$ such that its 
reduction modulo $m$ is the $n$-marked graph decomposition $(G,Y,d)$ and the corresponding 
natural epimorphisms of edge groups $\tilde{G}_e\tura G_e$ define frames equivalent to the 
$\mu_e$, for all $e\in e(Y)$. 

\begin{definition}\label{complex framed graph deco}
Let $\cG^n_\ast(H_m)_\bt$ be the simplicial set whose semi-simplicial set of non-degenerate 
simplices is defined as follows.
\begin{enumerate}
\item The set of $k$-simplices is the set of equivalence classes of framed, $n$-marked graph 
decompositions of $H_m$ of rank $k+1$. 

\item The $(k-1)$-faces of a $k$-simplex $[G,Y,d,\{\mu_e\}]$  are the equivalence classes of 
$n$-marked graph decompositions $(G',Y',d')\leq (G,Y,d)$ of rank $k$, endowed with the frames 
$\{\mu_{e}\}_{e\in e(Y')}$. Face maps are then defined in the same way as in 
Definition~\ref{complex graph deco}. 
\end{enumerate}
\end{definition}

From the above discussion, it follows: 

\begin{theorem}\label{abelian nerve m}For all $m\geq 2$ and $g\geq 1$, there is a natural 
isomorphism of simplicial sets $\Psi_\bt^{(m)}\co C^{(m)}(S_{g,n})_\bt\ra\cG^n_\ast(H_m)_\bt$.
\end{theorem}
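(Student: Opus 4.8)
The plan is to build on Theorem~\ref{abelian nerve}, which already provides the identification $C^{(0)}(S_{g,n})_\bt\cong\cG^n(H_0)_\bt$, and then carefully pass to the quotient by $\Sp_{2g}(\Z,m)\cong\GG(m)/\cT_{g,n}$. First I would fix, for each $k\geq 0$, the surjection $\widetilde{\Psi}^{(m)}_k\co C^{(m)}(S_{g,n})_k\to\cG^n_\ast(H_m)_k$ obtained as follows: a $k$-simplex of $C^{(m)}(S_{g,n})$ is a $\GG(m)$-orbit of a simplex $\sg\in C(S_{g,n})$; lift $\sg$ to the associated $n$-marked graph decomposition $(\tilde G,Y,d)$ of $H_0$ (via Theorem~\ref{abelian nerve}); reduce mod $m$ to get $(G,Y,d)$; and equip each edge group $G_e$ with the frame $\mu_e$ coming from the chosen orientations of the s.c.c.'s in $\sg$ together with the natural epimorphism $\tilde G_e\tura G_e$. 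One must check this is well-defined on $\GG(m)$-orbits (an element of $\GG(m)$ acts trivially on $H_m$ and preserves homology classes up to the sign ambiguity allowed in Definition~\ref{marked graph}), compatible with face maps (inherited from Theorem~\ref{abelian nerve} plus the fact that deleting an edge just forgets its frame), and surjective (by the remark following Definition~\ref{marked graph}, every framed decomposition of $H_m$ lifts to a decomposition of $H_0$, hence is realized by some simplex).

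Next I would verify injectivity of $\Psi^{(m)}_\bt$. Suppose two simplices $\sg$, $\sg'$ of $C(S_{g,n})$ determine equivalent framed, $n$-marked graph decompositions of $H_m$. Lifting both to decompositions $(G,Y,d,\{\mu_e\})$ and $(G',Y',d',\{\mu'_e\})$ of $H_0$ that reduce to the given ones mod $m$, the content of the long discussion preceding the theorem is precisely that the equivalence of framed decompositions over $H_m$ supplies, for every vertex $v$, a compatible choice of liftable generators for edge groups and for the symplectic complements $G_v'$, and hence symplectic bases $\{a_i\}$ and $\{a'_i\}$ of $H_0$ satisfying properties a), b), c). Property a) produces an element $F\in\Sp_{2g}(\Z,m)$, and properties b), c) guarantee $F(G_\sg)=G'_{f(\sg)}$ for every simplex $\sg$, so the two $H_0$-decompositions lie in the same $\Sp_{2g}(\Z,m)$-orbit. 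By Theorem~\ref{abelian nerve} this means the underlying simplices $\sg$, $\sg'$ of $C^{(0)}(S_{g,n})$ lie in the same $\Sp_{2g}(\Z,m)$-orbit, i.e.\ in the same $\GG(m)$-orbit, so they represent the same simplex of $C^{(m)}(S_{g,n})_\bt$. This shows $\Psi^{(m)}_\bt$ is a bijection on simplices in each degree; being already a simplicial map, it is an isomorphism of simplicial sets.

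The main obstacle is the edge-frame bookkeeping in the injectivity argument: over $\Z/m$ with $m>3$ a generator of $G_e/m=G'_{f(e)}/m$ need not lift simultaneously to generators of $G_e$ and $G'_{f(e)}$, which is exactly why frames are built into $\cG^n_\ast(H_m)_\bt$, and one must check that the equivalence relation of Definition~\ref{marked graph} (allowing only the sign ambiguity $\mu_{f(e)}(1)=\pm\mu_e(1)$) is neither too coarse nor too fine — too fine would break surjectivity, too coarse would break injectivity. Concretely, I would isolate the claim: \emph{given framed decompositions of $H_0$ that are framed-equivalent mod $m$, one can choose compatible symplectic bases realizing a), b), c)}, handling the edge groups using the frames and the vertex symplectic complements $G_v'$ using the fact (noted before Definition~\ref{marked graph}) that bases of maximal symplectic non-degenerate submodules of $G_v/m$ lift simultaneously; then extending to a full symplectic basis of $H_0$ using that both $E(v)$ and $G_v'$ behave well. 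The remaining points — well-definedness, face compatibility, the degenerate simplices being handled automatically since we only specify the semi-simplicial set of non-degenerate simplices — are routine and I would dispatch them briefly.
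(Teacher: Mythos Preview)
Your proposal is correct and follows essentially the same approach as the paper: the paper's proof consists entirely of the discussion preceding the theorem statement (the theorem is introduced with ``From the above discussion, it follows''), and you have accurately reconstructed that discussion --- passing from $C^{(0)}(S_{g,n})_\bt\cong\cG^n(H_0)_\bt$ to the $\Sp_{2g}(\Z,m)$-quotient, identifying the obstruction to injectivity of the na\"ive map to $\cG^n(H_m)_\bt$ as the failure of simultaneous lifting of edge-group generators, and recognizing that the frame data of Definition~\ref{marked graph} is precisely what is needed to produce the symplectic bases satisfying a), b), c). Your treatment of well-definedness, surjectivity, and the vertex-group symplectic complements also matches the paper's remarks.
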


\begin{remark}\label{m=2,3}Let $(G,Y,d)$ be an $n$-marked graph decomposition of $H_m$.
For $m=2,3$, there is clearly, for each unoriented edge $e$ of $Y$, only one
equivalence class of frames $\Z\tura G_e$. Therefore, for $m=2,3$, it holds 
$\cG^n_\ast(H_m)_\bt\equiv\cG^n(H_m)_\bt$.
\end{remark}

The description given in Theorem~\ref{abelian nerve} and Theorem~\ref{abelian nerve m} of the simplicial sets $C^{(m)}(S_{g,n})_\bt$, for $g\geq 1$ and $m\neq 1$, can be made more precise, 
for $g\geq 2$ and $n=0$, restricting either to the complex of non-separating curves $C_0(S_g)$ or 
to the complex of separating curves $C_{sep}(S_g)$. The first is the sub-complex of $C(S_g)$ 
which consists of simplices $\sg$ such that $S_g\ssm\sg$ is connected. The second is the full 
sub-complex of $C(S_g)$ generated by $0$-simplices $\gm$ such that $S_g\ssm\gm$ is 
disconnected.  

In Proposition~6.7 of \cite{Putman}, the quotient $C_0^{(m)}(S_g)_\bt:=C_0(S_g)_\bt/\GG(m)$ is 
described, for $m\geq 2$, as the complex of lax isotropic bases (cf. Definition~6.6 \cite{Putman}). 
In our terminology, the simplices of $\cG_\ast(H_m)$ contained in $C_0^{(m)}(S_g)_\bt$ are 
determined by their framed edge groups. In particular, $C_0^{(m)}(S_g)_\bt$ 
is the simplicial set associated to a genuine simplicial complex, equivalently, each simplex is 
determined by its vertices. 

In \cite{Kallen}, the quotient $C_{sep}^{(0)}(S_g)_\bt:=C_{sep}(S_g)_\bt/\cT_g$ is
described by a complex of tree decompositions of $H_0$. The simplicial set $\cG^n(H_0)_\bt$ 
is a natural generalization of this notion, except that the complex of tree decompositions 
is directly realized as the simplicial complex associated to the poset of tree decompositions with 
the partial order $\leq$ defined above.

In \cite{Putman} and \cite{Kallen}, the above descriptions of the complexes 
$C_0^{(m)}(S_g)$ and $C_{sep}^{(0)}(S_g)$ are introduced in order to prove that, in analogy with
the curve complexes $C_0(S_g)$ and $C_{sep}(S_g)$, they are highly connected.
More precisely, they prove that $C_0^{(m)}(S_g)$ is $(g-2)$-connected, for 
$m\geq 2$, and that $C_{sep}^{(0)}(S_g)$ is $(g-3)$-connected, respectively. 

Thus, it is a natural question whether similar results hold for the simplicial sets 
$C^{(m)}(S_{g,n})_\bt$, for $m\neq 1$. From Proposition~3.3 \cite{B-P} and Proposition~6.4
\cite{Putman}, it follows:

\begin{proposition}\label{BP}For $g\geq 2$ and $m\neq 1$, the simplicial set 
$C^{(m)}(S_{g,n})_\bt$ is simply connected and $(g-2)$-connected. 
\end{proposition}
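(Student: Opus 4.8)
The plan is to derive the statement from the two cited results after identifying $C^{(m)}(S_{g,n})_\bt$ with a quotient of the ordinary curve complex. By Definition~\ref{nervedef} and the construction preceding Theorem~\ref{abelian nerve m}, the geometric realization of $C^{(m)}(S_{g,n})_\bt$ is the cellular complex $|C(S_{g,n})|/\GG(m)$, with the convention $\GG(0):=\cT_{g,n}$; equivalently, by Theorems~\ref{abelian nerve} and~\ref{abelian nerve m}, it is $|\cG^n(H_0)_\bt|$ for $m=0$ and $|\cG^n_\ast(H_m)_\bt|$ for $m\ge 2$. Connectedness is automatic, since $C(S_{g,n})$ is connected for $g\ge 2$ and a quotient of a connected complex is connected; so for $g=2$ only simple connectedness remains to be proved, while for $g\ge 3$ one must show $\pi_i=0$ for $i\le g-2$.

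For the higher homotopy groups I would, if necessary, first reduce to $n=0$: forgetting the punctures gives Birman-type exact sequences among $\GG_{g,n}$, $\GG(m)$ and their images under filling in the punctures, and a $\GG_{g,n}$-equivariant simplicial map of curve complexes defined away from the curves bounding a twice-punctured disc, which a Harer-type surgery shows to be highly connected with contractible fibres; so on passing to quotients the connectivity of $C^{(m)}(S_{g,n})_\bt$ is governed by that of $C^{(m)}(S_g)_\bt$. It then remains to pass from the full quotient $C^{(m)}(S_g)_\bt$ to its non-separating part $C_0^{(m)}(S_g)_\bt$, the $(g-2)$-connectivity of the latter being (a consequence of) Proposition~6.4 \cite{Putman}, for which its description as the complex of lax isotropic bases is the crucial point. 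I would prove the inclusion $C_0^{(m)}(S_g)_\bt\hookra C^{(m)}(S_g)_\bt$ to be $(g-1)$-connected by resolving the \emph{bad} simplices of a given cycle --- those carrying a separating curve or a cut pair --- one at a time, the links along which they are attached being highly connected by arguments of the same nature as those underlying the connectivity results of \cite{Kallen} and \cite{Putman}. Combined with Proposition~6.4 \cite{Putman}, this yields $\pi_i\big(C^{(m)}(S_{g,n})_\bt\big)=0$ for $i\le g-2$.

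Simple connectedness then requires attention only when $g=2$. Since $C(S_{g,n})$ is simply connected for $g\ge 2$ by Harer's computation of its homotopy type, Armstrong's theorem identifies $\pi_1\big(|C(S_{g,n})|/\GG(m)\big)$ with $\GG(m)$ modulo the subgroup generated by the stabilizers of its simplices --- one first passes to the barycentric subdivision so that the action is simplicial and, by Proposition~\ref{without rotations} when $m\ge 3$, without inversions. Proposition~3.3 \cite{B-P} shows this subgroup exhausts $\GG(m)$: it contains the separating twists and the bounding-pair maps, hence the Torelli group $\cT_{g,n}$, and the $m$-th powers of twists about non-separating curves, which project onto a generating set of level-$m$ transvections of $\GG(m)/\cT_{g,n}\cong\Sp_{2g}(\Z,m)$; therefore $\pi_1\big(C^{(m)}(S_{g,n})_\bt\big)=1$.

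The main obstacle is the passage, inside the quotient complexes, from the non-separating part to the full complex: Harer's surgeries cannot be performed freely once one works modulo $\GG(m)$, so one needs a genuinely equivariant (or descended) form of the argument and must control the connectivity of the links of the separating-curve and cut-pair simplices. The low value of the connectivity bound itself is not caused by this step --- it is already present in Proposition~6.4 \cite{Putman}, reflecting how much connectivity is lost when the highly connected complex $C_0(S_g)$ is quotiented by the Torelli group --- but verifying that the bound survives both this step and the marked-point reduction (routine in outline, but resting on a forgetful map that is only partially simplicial) is where the actual work lies.
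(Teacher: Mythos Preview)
Your argument for simple connectedness is exactly the paper's: Armstrong's theorem applied to the simply connected complex $|C(S_{g,n})|$, together with the fact (Proposition~3.3 \cite{B-P}) that $\GG(m)$ is generated by separating twists, bounding-pair maps and $m$-th powers of Dehn twists, each of which fixes a simplex.

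For the $(g-2)$-connectedness the two arguments share the same endpoint --- Proposition~6.4 \cite{Putman} for $C_0^{(m)}(S_g)$ --- but reach it differently. The paper does not reduce to $n=0$ at all: it asserts directly that the $(g-1)$-skeleton of $|C^{(m)}(S_{g,n})_\bt|$ can be deformed, \emph{inside this complex}, onto the subcomplex $|C_0^{(m)}(S_g)|$, and then quotes Putman. Your route inserts an intermediate step (the marked-point reduction via a partially defined forgetful map and Birman-type sequences) before the bad-simplex resolution; this is not wrong, but it is extra work the paper avoids, and it is also the part of your outline you yourself flag as delicate. Your bad-simplex argument is, in effect, one way to substantiate the deformation claim the paper states in one line; what the paper gains is brevity, while your version makes more visible where the actual combinatorics lies.
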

 
\begin{proof}By Proposition~3.3 in \cite{B-P}, the abelian level $\GG(m)$ of $\GG_{g,n}$, 
for $g\geq 2$ and $m\neq 1$, is generated by Dehn twists along separating maps, $m$-th powers 
of Dehn twists and cut pair maps. All these elements stabilize some simplex of the curve
complex $C(S_{g,n})$ and then some point of its geometric realization $|C(S_{g,n})|$.
From Theorem~3 in \cite{Armstrong}, it follows that the quotient space $|C(S_{g,n})|/\GG(m)$ is 
simply connected for $g\geq 2$ and $m\neq 1$.

As A. Putman pointed out to me, from Proposition~6.4
\cite{Putman} and the fact that, for $g\geq 2$, the $(g-1)$-skeleton of the geometric realization  
$|C^{(m)}(S_{g,n})_\bt|$ can be deformed, inside this complex, to the sub-complex 
$|C_0^{(m)}(S_g)|$, it follows that the simplicial set $C^{(m)}(S_{g,n})_\bt$ is also 
$(g-2)$-connected. 

\end{proof}

\section{The congruence subgroup problem}\label{congruence}
In this section, we are going to connect all the topics of the previous sections with the congruence
subgroup problem for the modular Teichm\"uller group $\GG_{g,n}$. This is the problem of
determining whether the tower of geometric levels $\{\GG^K\}_{K\unlhd\Pi_{g,n}}$ is
cofinal inside the tower of all finite index subgroups of $\GG_{g,n}$. This is known to be true
in genus $\leq 2$ (cf. \cite{Asada} and \cite{Hyp2}) but it is still an open problem in 
general. The solution proposed in \cite{PFT} is flawed by a mistake in the proof of 
Theorem~5.4. However, the strategy elaborated there is still viable and will be explained shortly.

In \cite{PFT}, to every profinite completion $\GG'_{g,n}$ of the Teichm\"uller group $\GG_{g,n}$ 
satisfying some mild technical conditions (cf. Definition~5.2 \cite{PFT}), is associated the
$\GG'_{g,n}$-completion $C'(S_{g,n})_\bt$ of the curve complex $C(S_{g,n})$. 

Let $\{\GG^\ld\}_{\ld\in\Ld'}$ be the set of levels of $\GG_{g,n}$ obtained as the inverse images  
of open normal subgroups of $\GG'_{g,n}$, via the natural homomorphism $\GG_{g,n}\ra\GG'_{g,n}$, 
and contained in an abelian level of order at least $3$. Then, the $\GG'_{g,n}$-completion
of the curve complex $C(S_{g,n})$ is defined to be the inverse limit of finite simplicial sets 
(cf. Remark~\ref{simplicial nerve}):
$$C'(S_{g,n})_\bt:=\lim\limits_{\sr{\textstyle\st\longleftarrow}{\sst\ld\in\Ld'}}\,C^\ld(S_{g,n})_\bt.$$
Thus, it is an object in the category of simplicial profinite sets.

In \cite{Quick} and \cite{Quick2}, Gereon Quick developed a homotopy theory for simplicial profinite 
sets, which turns out to be particularly useful here. For a simplicial profinite set $X_\bt$, let us then 
denote by $\hp_1(X_\bt)$ its fundamental group as defined by Quick.

Quick's profinite homotopy theory would not be so useful for us, there were not a way to relate
homotopy groups of the $\GG'_{g,n}$-completion $C'(S_{g,n})_\bt$ with those of its finite
quotients $C^\ld(S_{g,n})_\bt$. However, this issue can be solved positively thanks to the
results of \cite{Quick2}:

\begin{proposition}\label{quick}Let $\{X_\bt^\ld\}_{\ld\in\Ld}$ be a cofiltering inverse system
of simplicial finite sets and let $X_\bt:=\ilim_{\ld\in\Ld}\,X_\bt^\ld$. For all $q\geq 0$,
there is then a natural isomorphism:
$$\hp_q(X_\bt)\cong\lim\limits_{\sr{\textstyle\st\longleftarrow}{\sst\ld\in\Ld}}\hp_q(X_\bt^\ld).$$
In particular, the fundamental group of $X_\bt$ is the inverse limit 
of the profinite completions of the fundamental groups of the geometric realizations
$\{|X_\bt^\ld|\}_{\ld\in\Ld}$.
\end{proposition}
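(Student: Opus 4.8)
The plan is to reduce Proposition~\ref{quick} to the known comparison results of Quick for continuous cohomology and homotopy of profinite spaces, treating the two assertions separately but by a common mechanism. First I would recall that for a simplicial \emph{finite} set $X_\bt^\ld$, Quick's profinite homotopy groups $\hp_q(X_\bt^\ld)$ coincide with the profinite completions $\wh{\pi_q(|X_\bt^\ld|)}$ of the ordinary homotopy groups of the geometric realization; this is part of the basic comparison between the simplicial-set model and the profinite model in \cite{Quick}, so the last sentence of the statement will follow immediately once the displayed isomorphism is established for $q=1$. The core task is therefore the displayed formula
$$\hp_q(X_\bt)\cong\lim\limits_{\sr{\textstyle\st\longleftarrow}{\sst\ld\in\Ld}}\hp_q(X_\bt^\ld)$$
for a cofiltered inverse system, i.e. the statement that Quick's homotopy groups commute with cofiltered limits of simplicial finite sets.

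The key steps, in order, would be: (1) Observe that $X_\bt=\ilim_\ld X_\bt^\ld$ is already a simplicial profinite set in the sense of Quick, each $X_k$ being a profinite set as a cofiltered limit of finite sets, and that the natural maps $X_\bt\ra X_\bt^\ld$ exhibit $X_\bt$ as the limit in the category of simplicial profinite sets. (2) Invoke the model-categorical results of \cite{Quick2}: the forgetful/limit functors there are compatible with the profinite completion functor, and $X_\bt$ is fibrant (or can be replaced by a fibrant object) so that its homotopy groups are computed correctly. The decisive input is that in Quick's framework the homotopy groups of a cofiltered limit of fibrant simplicial profinite sets, each of which has finite homotopy groups, are the inverse limit of the homotopy groups of the terms — this is exactly the continuity property for which \cite{Quick2} was designed, and it is the analogue for homotopy groups of the well-known fact that continuous cohomology of a profinite space is the colimit of the cohomologies of its finite quotients. (3) Apply this with the terms $X_\bt^\ld$, which are simplicial finite sets with finite homotopy groups, to get the displayed isomorphism; then specialize $q=1$ and combine with step~(0) (the finite-level comparison $\hp_1(X_\bt^\ld)\cong\wh{\pi_1(|X_\bt^\ld|)}$) to obtain the final sentence.

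The main obstacle I expect is technical rather than conceptual: verifying that the hypotheses of the continuity theorem in \cite{Quick2} are literally met by the system $\{X_\bt^\ld\}$. Two points need care. First, one must ensure fibrancy (or perform a functorial fibrant replacement compatible with the limit) so that $\hp_q$ means the homotopy group of the derived limit rather than the naive one; because the indexing category $\Ld$ is cofiltered and the transition maps of homotopy \emph{pro}-groups are eventually surjective (the $X_\bt^\ld$ are connected finite complexes with epimorphic transition maps on $\pi_1$, coming from quotients $C^\ld(S_{g,n})_\bt$), the relevant $\lim^1$-terms vanish and the derived limit agrees with the ordinary one. Second, one should check that Quick's homotopy groups of a simplicial finite set agree with the profinite completion of the ordinary ones — this is standard but must be cited precisely. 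Once these are in place the proof is essentially a formal consequence of \cite{Quick} and \cite{Quick2}, so I would keep the write-up short, stating the two cited inputs explicitly and then assembling them.
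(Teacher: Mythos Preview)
Your overall strategy is close to the paper's, and the citations you plan to invoke (\cite{Quick}, \cite{Quick2}) are the right ones, but there are two genuine errors in the proposal that you should fix.

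First, you assert that the simplicial finite sets $X_\bt^\ld$ ``have finite homotopy groups''. This is false in general: a finite simplicial model of $S^1$ has $\pi_1\cong\Z$. What is true is that Quick's $\hp_q(X_\bt^\ld)$ is the \emph{profinite completion} of $\pi_q(|X_\bt^\ld|)$, which is what you need for the last sentence, but you cannot feed ``finite homotopy groups'' into a continuity theorem as a hypothesis. Second, when worrying about $\lim^1$-terms you appeal to properties of the specific system $\{C^\ld(S_{g,n})_\bt\}$ (connectedness, epimorphic transition maps on $\pi_1$); but the proposition is stated for an \emph{arbitrary} cofiltering system of simplicial finite sets, so this is not a legitimate move in its proof.

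The paper avoids both problems by being more explicit than you are about the fibrant replacement. Rather than invoking an abstract continuity theorem, it uses the concrete functorial fibrant replacement $X_\bt^\ld\mapsto\bar{W}\hat{G}X_\bt^\ld$ from \S 3.5 of \cite{Quick2} (Theorem~3.11 there), observes that this construction \emph{commutes with cofiltered limits} so that $\bar{W}\hat{G}X_\bt=\ilim_\ld\bar{W}\hat{G}X_\bt^\ld$ is itself a fibrant replacement of $X_\bt$, and then applies Lemma~2.18 of \cite{Quick} to pass homotopy groups through the limit of fibrant objects. No $\lim^1$ argument and no hypothesis on the individual homotopy groups is needed. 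I would recommend you rewrite your sketch along these lines: name the explicit fibrant replacement, state that it commutes with the inverse limit, and cite Lemma~2.18 of \cite{Quick} for the final step; then cite Proposition~2.1 of \cite{Quick} for the identification $\hp_1(X_\bt^\ld)\cong\wh{\pi_1(|X_\bt^\ld|)}$.
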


\begin{proof}In \S 3.5 of \cite{Quick2}, an explicit fibrant replacement functor is defined for the 
model structure of the category of simplicial profinite sets $\hat{\cS}$, constructed in \S 2 \cite{Quick}. 
For the given simplicial profinite set $X_\bt$, it is described as follows.

Let $\hat{G}X^\ld_\bt$, for $\ld\in\Ld$, be the free simplicial profinite loop groupoid
associated to the simplicial finite set $X^\ld_\bt$. By Theorem~3.11 \cite{Quick2}, the profinite 
classifying space $\bar{W}\hat{G}X^\ld_\bt$, for $\ld\in\Ld$, is fibrant in $\hat{\cS}$ 
and is equipped with a natural map $\hat{\eta}_\ld\co X^\ld_\bt\ra\bar{W}\hat{G}X^\ld_\bt$, which 
is a trivial cofibration in $\hat{\cS}$ and then makes of $\bar{W}\hat{G}X^\ld_\bt$, for $\ld\in\Ld$, 
an explicit functorial fibrant replacement of the simplicial finite set $X_\bt$.
Again by Theorem~3.11 \cite{Quick2}, the inverse limit
$$\bar{W}\hat{G}X_\bt:=
\lim\limits_{\sr{\textstyle\st\longleftarrow}{\sst\ld\in\Ld}}\,\bar{W}\hat{G}X^\ld_\bt$$
is equipped with a canonical map $\hat{\eta}\co X_\bt\ra\bar{W}\hat{G}X_\bt$ 
which is a trivial cofibration in $\hat{\cS}$ and makes of $\bar{W}\hat{G}X_\bt$ an explicit 
functorial fibrant replacement of the simplicial profinite set $X_\bt$. 
According to Lemma~2.18 \cite{Quick}, for all $q\geq 0$,
there is then a series of natural isomorphisms:
$$\hp_q(X_\bt)\cong\hp_q(\bar{W}\hat{G}X_\bt)\cong
\lim\limits_{\sr{\textstyle\st\longleftarrow}{\sst\ld\in\Ld}}\hp_q(\bar{W}\hat{G}X^\ld_\bt)
\cong\lim\limits_{\sr{\textstyle\st\longleftarrow}{\sst\ld\in\Ld}}\hp_q(X^\ld_\bt).$$

By Proposition~2.1 \cite{Quick}, the fundamental group  $\hat{\pi}_1(X^\ld_\bt)$ is just the 
profinite completion of the topological fundamental group of the geometric realization of the simplicial 
finite set $X^\ld_\bt$ and this implies the last claim in the proposition.

\end{proof} 

In particular, for the $\GG'_{g,n}$-completion $C'(S_{g,n})_\bt$ of the curve complex $C(S_{g,n})$,
there is a natural isomorphism, for all $q\geq 0$:
$$\hp_q(C'(S_{g,n})_\bt)\cong\lim\limits_{\sr{\textstyle\st\longleftarrow}{\sst\ld\in\Ld'}}
\hp_q(C^\ld(S_{g,n})_\bt).$$

Let $\hGG_{g,n}$ be the profinite completion of the Teichm\"uller group and let us define the 
profinite curve complex $\hC(S_{g,n})_\bt$ to be the $\hGG_{g,n}$-completion of the complex of 
curves. The following result, with a different formalism, was already proved in \cite{PFT}:

\begin{theorem}\label{simply connected}For $2g-2+n>0$ and $3g-3+n>2$, the simplicial profinite set 
$\hC(S_{g,n})_\bt$ is simply connected.
\end{theorem}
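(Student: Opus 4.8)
The plan is to establish simple connectivity of $\hC(S_{g,n})_\bt$ by combining two ingredients: first, the inverse‐limit formula for $\hp_1$ coming from Proposition~\ref{quick}, and second, a connectivity statement for the finite quotients $C^\ld(S_{g,n})_\bt$ obtained by exploiting that the relevant levels are generated by elements fixing a simplex of the curve complex. Concretely, by Proposition~\ref{quick} applied to the cofiltered system $\{C^\ld(S_{g,n})_\bt\}_{\ld\in\Ld'}$ whose limit is (by definition) $\hC(S_{g,n})_\bt$, there is a natural isomorphism $\hp_1(\hC(S_{g,n})_\bt)\cong\ilim_{\ld\in\Ld'}\hp_1(C^\ld(S_{g,n})_\bt)$, and $\hp_1(C^\ld(S_{g,n})_\bt)$ is the profinite completion of $\pi_1(|C^\ld(S_{g,n})|)=\pi_1(|C(S_{g,n})|/\GG^\ld)$. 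So it suffices to show that each quotient space $|C(S_{g,n})|/\GG^\ld$, for $\ld\in\Ld'$, is simply connected.

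For the latter, I would invoke Armstrong's theorem (Theorem~3 in \cite{Armstrong}): if a group $H$ acts on a simply connected simplicial complex $X$ and $H$ is generated by the elements having a fixed point, then $X/H$ is simply connected. Here $X=|C(S_{g,n})|$, which is simply connected precisely when $3g-3+n>2$ (this is where the dimension hypothesis enters — it is exactly the range in which $C(S_{g,n})$ is known to be connected and simply connected, by Harer's and related results), and $H=\GG^\ld$. The key point is that every level $\GG^\ld$ with $\ld\in\Ld'$ is, by definition, contained in an abelian level of order $\geq 3$, hence contains no inversions (Proposition~\ref{without rotations}); moreover, exactly as in the proof of Proposition~\ref{BP}, one uses Proposition~3.3 of \cite{B-P} to conclude that the abelian level $\GG(m)$, and a fortiori any level $\GG^\ld\leq\GG(m)$ that is itself generated by such simplex-stabilizing elements, is generated by Dehn twists along separating curves, $m$-th powers of Dehn twists, and cut-pair maps, all of which stabilize a simplex of $C(S_{g,n})$ and hence a point of $|C(S_{g,n})|$. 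Thus Armstrong's hypotheses are met and $|C(S_{g,n})|/\GG^\ld$ is simply connected.

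Having shown each $C^\ld(S_{g,n})_\bt$ is simply connected, its profinite fundamental group is trivial, so the inverse limit $\hp_1(\hC(S_{g,n})_\bt)$ is trivial as well, which is the assertion. One should also check the mild point that $\Ld'$ is cofinal, i.e. genuinely indexes an inverse system whose limit is $\hC(S_{g,n})_\bt$ in the sense of the $\hGG_{g,n}$-completion — this follows because levels contained in abelian levels of order $\geq 3$ form a cofinal family among all levels, as Serre's lemma (quoted in Section~\ref{levels}) guarantees that intersecting any level with an abelian level of order $\geq 3$ does not change the congruence topology in an essential way; alternatively one simply defines the $\hGG_{g,n}$-completion using this cofinal subfamily, matching the convention in Section~\ref{congruence}.

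The main obstacle I anticipate is not the inverse-limit bookkeeping but verifying that the generating set for $\GG^\ld$ (not merely for $\GG(m)$) consists of simplex-stabilizing elements: Armstrong's theorem needs the acting group itself to be generated by elements with fixed points, so one must be careful that restricting from $\GG(m)$ to a smaller level $\GG^\ld$ preserves this property. The cleanest route is to apply Armstrong's theorem directly with $H=\GG(m)$ to get $|C(S_{g,n})|/\GG(m)$ simply connected, and then handle the intermediate quotient $|C(S_{g,n})|/\GG^\ld\to|C(S_{g,n})|/\GG(m)$ separately — but that intermediate map need not itself satisfy Armstrong's hypotheses, so instead one applies Armstrong's theorem a second time to the action of the finite group $\GG(m)/\GG^\ld$ on the simply connected complex $|C(S_{g,n})|/\GG^\ld$ only after first checking $|C(S_{g,n})|/\GG^\ld$ is simply connected, which is circular. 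The correct fix is to observe that Proposition~3.3 of \cite{B-P} in fact applies to all the levels in $\Ld'$ that arise as preimages of open subgroups of $\hGG_{g,n}$ — or, more robustly, to prove simple connectivity of $\hC(S_{g,n})_\bt$ directly by showing $\GG(m)$ (for all $m\geq 3$) is generated by simplex stabilizers, deduce $|C(S_{g,n})|/\GG(m)$ is simply connected, and then pass to the limit over just the abelian levels, which are cofinal in $\Ld'$ up to the congruence topology; this sidesteps the issue entirely since $\{\GG(m)\}_{m\geq 3}$ is an honest cofinal system for which the generation statement is exactly Proposition~3.3 of \cite{B-P}.
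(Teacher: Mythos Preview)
Your argument has a real gap, and it is exactly the one you flag in your final paragraph but do not successfully resolve. You try to show that each finite quotient $|C(S_{g,n})|/\GG^\ld$ is simply connected by applying Armstrong's theorem in the form ``if the acting group is generated by elements with fixed points, the quotient is simply connected''. For the abelian levels $\GG(m)$ this generation statement is indeed Proposition~3.3 of \cite{B-P}, but for an arbitrary level $\GG^\ld\in\Ld'$ it is neither proved nor known. Your proposed escape route --- restrict to the abelian levels and pass to the limit over $\{\GG(m)\}_{m\geq 3}$ --- fails because the abelian levels are \emph{not} cofinal in the full profinite topology on $\GG_{g,n}$: their intersection is the Torelli group, not the trivial group. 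So the inverse limit of $C^{(m)}(S_{g,n})_\bt$ over $m$ is a much smaller object than $\hC(S_{g,n})_\bt$, and simple connectivity of the former says nothing about the latter.

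The paper's proof avoids this difficulty by using Armstrong's theorem in its full form: for $X$ simply connected, one has $\pi_1(X/H)\cong H/N$ where $N$ is the normal subgroup generated by elements with fixed points, so in particular there is always a natural \emph{epimorphism} $H\tura\pi_1(X/H)$, with no hypothesis on generators. Applying this with $X=|C(S_{g,n})|$ (simply connected for $3g-3+n>2$) and $H=\GG^\ld$ gives $\GG^\ld\tura\pi_1(C^\ld(S_{g,n})_\bt)$ for every $\ld\in\Ld$. Profinite completion preserves surjections, so $\hGG^\ld\tura\hat\pi_1(C^\ld(S_{g,n})_\bt)$; passing to the inverse limit over the cofinal system $\Ld$ (all levels contained in an abelian level of order $\geq 3$) yields an epimorphism from $\ilim_\ld\hGG^\ld=\bigcap_\ld\hGG^\ld=\{1\}$ onto $\hp_1(\hC(S_{g,n})_\bt)$. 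Thus the individual finite quotients need not be simply connected at all; what matters is that the surjections $\hGG^\ld\tura\hat\pi_1(C^\ld)$ have sources shrinking to the identity.
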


\begin{proof}Let $\{\GG^\ld\}_{\ld\in\Ld}$ be the set of all levels $\GG^\ld$ of $\GG_{g,n}$
which contain an abelian level of order at least $3$.
By definition, the simplicial profinite set $\hC(S_{g,n})_\bt$ is the inverse limit, over $\Ld$,
of the simplicial finite sets $C^\ld(S_{g,n})_\bt$. By what observed above, there is 
a natural isomorphism:
$$\hp_1(\hC(S_{g,n})_\bt)\cong\lim\limits_{\sr{\textstyle\st\longleftarrow}{\sst\ld\in\Ld}}
\hat{\pi}_1(C^\ld(S_{g,n})_\bt).$$ 

For $3g-3+n>2$, the simplicial set $C^\ld(S_{g,n})_\bt$ is the quotient of a simply connected 
simplicial set by the simplicial action of $\GG^\ld$. By Theorem~3 in Armstrong \cite{Armstrong},
there is then a natural epimorphism $\GG^\ld\tura\pi_1(C^\ld(S_{g,n})_\bt)$. 
Therefore, taking profinite completions, we get a natural epimorphism 
$\hGG^\ld\tura\hat{\pi}_1(C^\ld(S_{g,n})_\bt)$ and, passing to inverse limits, an epimorphism:
$$\lim\limits_{\sr{\textstyle\st\longleftarrow}{\sst\ld\in\Ld}}\hGG^\ld\tura\hp_1(\hC(S_{g,n})_\bt).$$
But now $\ilim_{\ld\in\Ld}\,\hGG^\ld=\cap_{\ld\in\Ld}\,\hGG^\ld=\{1\}$ and the theorem follows.

\end{proof}

Let us now denote by $\cGG_{g,n}$ the profinite completion of the Teichm\"uller group $\GG_{g,n}$
induced by the tower of geometric levels $\{\GG^K\}_{K\unlhd\Pi_{g,n}}$ and by
$\check{C}(S_{g,n})_\bt$ the $\cGG_{g,n}$-completion of the complex of curves. 
They are called, respectively, the {\it procongruence Teichm\"uller group} and the {\it procongruence 
curve complex}. An immediate corollary of Theorem~\ref{simply connected} is then:

\begin{corollary}\label{cong}The congruence subgroup property for the Teichm\"uller group 
$\GG_{g,n}$ holds for all pairs $(g,n)$ such that $g\leq k$ and $2g-2+n>0$, if and only if
it holds $\hat{\pi}_1(\check{C}(S_{g})_\bt)=\{1\}$ for all $2\leq g\leq k$.
\end{corollary}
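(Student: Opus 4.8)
The plan is to derive Corollary~\ref{cong} from Theorem~\ref{simply connected} by analysing, in both directions, the action of the (pro)congruence Teichm\"uller group on the (pro)congruence curve complex. The direct implication is essentially formal: if the congruence subgroup property holds for all $\GG_{g,n}$ with $g\le k$ and $2g-2+n>0$, then for every $g$ with $2\le g\le k$ the epimorphism $\hGG_g\tura\cGG_g$ is an isomorphism, so the procongruence curve complex $\check{C}(S_g)_\bt$ coincides with the profinite curve complex $\hC(S_g)_\bt$; since, for $n=0$, the inequality $3g-3+n>2$ holds precisely when $g\ge 2$, Theorem~\ref{simply connected} gives $\hp_1(\check{C}(S_g)_\bt)=\hp_1(\hC(S_g)_\bt)=\{1\}$.

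For the converse I would argue by strong induction on $g$, proving that $\hp_1(\check{C}(S_{g'})_\bt)=\{1\}$ for all $2\le g'\le g$ implies the congruence subgroup property for every $\GG_{g',n'}$ with $g'\le g$ and $2g'-2+n'>0$. The cases $g\le 2$ are known (\cite{Asada}, \cite{Hyp2}), so fix $g\ge 3$ with $g\le k$ and assume the congruence subgroup property for all surfaces of genus $<g$; it then suffices to show that the congruence kernel $\ker(\hGG_g\tura\cGG_g)$ is trivial. Here $\hGG_g$ acts on $\hC(S_g)_\bt$, which is simply connected by Theorem~\ref{simply connected}, while $\cGG_g$ acts on $\check{C}(S_g)_\bt$, which is simply connected by the hypothesis and Proposition~\ref{quick}, and the two actions are compatible with the equivariant map $\hC(S_g)_\bt\ra\check{C}(S_g)_\bt$. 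By the description of the D--M boundary of level structures in Section~\ref{simplicial}, completed profinitely, the stabilizer in $\cGG_g$ (resp. in $\hGG_g$) of a simplex $\sg$ of the curve complex is, up to a finite group of signed permutations of $\sg$, an extension of the free abelian profinite group on the Dehn twists $\{\tau_\gm\}_{\gm\in\sg}$ by a product $\prod_i\cGG(S_{g_i,n_i})$ (resp. $\prod_i\hGG(S_{g_i,n_i})$) of (pro)congruence mapping class groups of the components $S_{g_i,n_i}$ of $S_g\ssm\sg$. Each such component has genus $g_i\le g-1$, so by the inductive hypothesis $\prod_i\hGG(S_{g_i,n_i})\tura\prod_i\cGG(S_{g_i,n_i})$ is an isomorphism; since free abelian profinite groups have trivial congruence kernel and goodness propagates through these extensions, the comparison map $\hGG_g\tura\cGG_g$ restricts to an isomorphism on the stabilizer of every simplex.

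A profinite analogue of Armstrong's theorem then exhibits both $\hGG_g$ and $\cGG_g$ as topologically generated, with compatible relations, by the stabilizers of simplices of the respective simply connected complexes; since the comparison map is an isomorphism on each such stabilizer, it is itself an isomorphism, i.e. the congruence subgroup property holds for $\GG_g$. The remaining punctured cases $\GG_{g,n}$ with $n\ge 1$ follow from the closed case by the standard propagation argument based on the Birman exact sequences $1\ra\pi_1(S_{g,n})\ra\GG_{g,n+1}\ra\GG_{g,n}\ra 1$ and the goodness of surface groups, which completes the induction.

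The main obstacle will be exactly the stabilizer analysis together with the profinite Armstrong step in the converse direction: one must verify that the profinite completion of a boundary stabilizer is the expected extension of an abelian group by a product of procongruence mapping class groups, with no congruence kernel beyond what the induction already controls, and that the action on the simply connected complex $\check{C}(S_g)_\bt$ genuinely forces $\cGG_g$ to be generated, with its relations, by these stabilizers. This is precisely the point at which the proof of Theorem~5.4 in \cite{PFT} was flawed, and making it rigorous requires careful control of how the congruence topology restricts to subsurface mapping class groups, together with the goodness of all the groups involved.
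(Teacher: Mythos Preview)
Your overall architecture matches the paper's: the forward implication is immediate from Theorem~\ref{simply connected}, the converse goes by induction on $g$ with the base $g\le 2$ known, the inductive step reduces to showing that the comparison map is an isomorphism on simplex stabilizers (the paper quotes Propositions~6.5 and~6.6 of \cite{PFT} for this, where the inductive hypothesis enters exactly as you describe), and the passage from $\GG_g$ to $\GG_{g,n}$ is done via Asada's Theorem~2 (your Birman-sequence argument).

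Where you diverge is at the crucial step from ``stabilizers match'' to ``$\hGG_g\to\cGG_g$ is an isomorphism''. You propose a profinite Armstrong-type argument, reconstructing each group from the generators and relations contributed by simplex stabilizers of a simply connected complex, and you correctly flag this as the delicate point. The paper sidesteps this entirely: once the stabilizers $\hGG_\sg\to\cGG_\sg$ agree for every simplex, the kernel $\ker\Psi_g$ acts \emph{freely} on $\hC(S_g)_\bt$, and the quotient is naturally $\check{C}(S_g)_\bt$. Quick's profinite covering-space theory (Corollary~2.3 of \cite{Quick}) then gives $\ker\Psi_g\cong\hp_1(\check{C}(S_g)_\bt)$, which is trivial by hypothesis. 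This free-action/covering argument is shorter and avoids any need to assemble $\cGG_g$ from stabilizers; your Armstrong route would require, in addition, a profinite complex-of-groups statement that is not readily available and is exactly where \cite{PFT} previously ran into trouble.
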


\begin{proof}One implication is immediate from Theorem~\ref{simply connected}. To prove the 
other, let us proceed by induction on $k$. The congruence subgroup property holds in genus 
$\leq 2$. So, the statement of the corollary holds at least for $k=1,2$.

For $k\geq 3$, in order to complete the induction, we have to show that the assumptions, that the 
congruence subgroup property holds for all pairs $(g',n)$ such that $g'<g$ and that 
$\hp_1(\check{C}(S_{g})_\bt)=\{1\}$, imply the congruence subgroup property for all pairs $(g,n)$.

Let $\sg$ be a simplex of the curve complex $C(S_g)$ and let us denote with the same letter its
image in the simplicial profinite sets $\hC(S_{g})_\bt$ and $\check{C}(S_{g})_\bt$. Each
$\hGG_{g}$-orbit of simplices in $\hC(S_{g})_\bt$ and $\check{C}(S_{g})_\bt$ contains a
"discrete" simplex $\sg$.

By Proposition~6.5 and Proposition~6.6 in \cite{PFT}, the assumption that the congruence subgroup 
property holds for all pairs $(g',n)$ such that $g'<g$, implies that the stabilizer $\cGG_\sg$ 
for the action of $\cGG_{g}$ on $\check{C}(S_{g})_\bt$ is the profinite completion of the discrete 
stabilizer $\GG_\sg$. Therefore, the natural epimorphism $\hGG_\sg\tura\cGG_\sg$ is actually an 
isomorphism. This is then true for any simplex $\sg\in\hC(S_{g})_\bt$ and its image in 
$\check{C}(S_{g})_\bt$. 

Thus, if $\Psi_{g}\co\hGG_{g}\ra\cGG_{g}$ denotes the natural epimorphism, the profinite 
group $\ker\,\Psi_{g}$ acts freely on $\hC(S_{g})_\bt$
with quotient naturally isomorphic to $\check{C}(S_{g})_\bt$. 
From Corollary~2.3 in \cite{Quick}, it follows that 
$\ker\,\Psi_{g}\cong\hat{\pi}_1(\check{C}(S_{g})_\bt)=\{1\}$, i.e. the congruence subgroup property
holds for the Teichm\"uller group $\GG_{g}$. 

From Theorem~2 in \cite{Asada}, it then follows that the congruence subgroup property holds for all 
pairs $(g,n)$, completing the induction step.

\end{proof}

\begin{remark}\label{kent}With the above notations, Kent (cf. Theorem~12 \cite{Kent}) has 
recently proved that it holds $\hp_1(\check{C}(S_{g,n})_\bt)=\{1\}$ for $g\geq 2$ and $n\geq 2$. 
However, the induction argument of Corollary~\ref{cong} applies only for $n\leq 1$. 
\end{remark}

The main difficulty in trying to prove the simple connectivity of the procongruence curve 
complexes $\check{C}(S_{g,n})_\bt$, for $g>2$, is that, for a given geometric level $\GG^K$ of
$\GG_{g,n}$, there is no combinatorial description of the finite simplicial set $C(S_{g,n})/\GG^K$. 
So, it is natural to try to remedy to this situation by replacing the tower of geometric levels 
$\{\GG^K\}_{K\unlhd\Pi_{g,n}}$ of the Techm\"uller group $\GG_{g,n}$ with the equivalent tower 
$\{\GG^{K,(m)}\}_{K\unlhd\Pi_{g,n}}$ of Looijenga levels, for some $m\geq 2$. This was actually 
one of the main motivations to introduce these levels in the first place.

In Section~\ref{symmetry}, we saw that, for $m\geq 2$, the natural map 
$\ol{T}_{G_K}\ra\ccM(S_K)^{(m)}$ factors through a morphism 
$\iota_{K,(m)}\co\ccM^{K,(m)}\ra\ccM(S_K)^{(m)}$ which, according to Theorem~\ref{normal}, is an 
embedding whenever $K$ satisfies the hypotheses of ii.) Remarks~\ref{representability}. 

The morphism $\iota_{K,(m)}$ then induces a map from the set of simplices of the simplicial set 
$C^{K,(m)}(S_{g,n})_\bt$, which describes the nerve of the D--M boundary of $\ccM^{K,(m)}$, to 
the set of simplices of the simplicial set $C^{(m)}(S_K)_\bt$, which describes the nerve of the 
D--M boundary of $\ccM(S_K)^{(m)}$. So, for every $k\geq 0$, to the morphism $\iota_{K,(m)}$, 
is associated a map:
$$\mathfrak{b}_k\co C^{K,(m)}(S_{g,n})_k\ra\coprod_{h\geq 0}C^{(m)}(S_K)_h.$$

Taking into account (cf. Theorem~\ref{abelian nerve m}) the natural isomorphism of 
$C^{(m)}(S_K)_\bt$ with the semi-simplicial set 
$\cG^{n_K}_\ast(H_1(\ol{S}_K,\Z/m))_\bt$ of framed, $n_K$-marked graph decompositions, where
$n_K$ is the number of punctures on $S_K$, 
the map $\mathfrak{b}_k$ is described as follows.

For a given $k$-simplex $\sg\in C^{K,(m)}(S_{g,n})_\bt$, let $\td{\sg}\in C(S_{g,n})$ be a
lift to the curve complex. Then, to the simplex $\sg$, we associate the simplex $\mathfrak{b}_k(\sg)$ 
of $\cG^{n_K}_\ast(H_1(\ol{S}_K,\Z/m))_\bt$ defined by the equivalence class of the framed,
$n_K$-marked graph decomposition of the homology group $H_1(\ol{S}_K,\Z/m)$ determined by
the partition $S_K\ssm p_K^{-1}(\td{\sg})$ of the Riemann surface $S_K$.

It is not clear whether the map $\mathfrak{b}_k$, even in case the morphism $\iota_{K,(m)}$ is 
an embedding, is injective and if its image can be described in a simple way. 
In fact, it is injective only if the image of $\ccM^{K,(m)}$ intersects each $G_K$-invariant 
stratum of the abelian level $\ccM(S_K)^{(m)}$ in a single irreducible component and this is
not obvious.  

The equivalence class $[D,Y,d,\{\mu_e\}]$, of a framed, $n_K$-marked graph decomposition of 
$H_1(\ol{S}_K,\Z/m)$ in the image of the map $\mathfrak{b}_k$, for some $k\geq 0$, admits a 
natural action of the group $G_K$ (in the sense of Definition~\ref{approximation}). 

Conversely, the equivalence class $[D,Y,d,\{\mu_e\}]$ of a framed, 
$n_K$-marked graph decomposition of $H_1(\ol{S}_K,\Z/m)$, which admits $G_K$ as a group of 
automorphisms, via the natural symplectic action of $G_K$ on $H_1(\ol{S}_K,\Z/m)$, parameterizes 
a stratum of $\ccM(S_K)^{(m)}$ which is $G_K$-invariant and thus intersects the fixed point locus 
in $\ccM(S_K)^{(m)}$ for the action of $G_K$. 

Therefore, the union of the images of the maps $\mathfrak{b}_k$, for $k\geq 0$, includes all 
framed, $n_K$-marked graph decomposition endowed with a natural action of $G_K$, if and only if,
each $G_K$-invariant stratum of $\ccM(S_K)^{(m)}$ intersects {\it all} connected components of 
the fixed point locus of $G_K$ in $\ccM(S_K)^{(m)}$, which is extremely unclear.

The above geometric picture suggests at least that the theory of graph decompositions can be
used to "approximate" the nerve of Looijenga level structures in a sense to be made clear 
by the following definition and Corollary~\ref{realization}.

\begin{definition}\label{approximation}
For $2g-2+n\geq 0$, let $K$ be a finite index characteristic subgroup
of $\Pi_{g,n}$ and let $p_K\co S_K\ra S_{g,n}$ be the associated covering with covering transformation
group $G_K$. Let $n_K$ be the number of punctures on the Riemann surface $S_K$. For an integer 
$m\geq 2$, let then $\cG^{n_K}_{G_K}(H_1(\ol{S}_K,\Z/m))_\bt$ 
be the simplicial set whose semi-simplicial set of non-degenerate simplices is defined as follows.
\begin{enumerate}
\item The set of $k$-simplices is the set of equivalence classes 
$[D,Y,d,\{\mu_e\}]$ of framed, $n_K$-marked graph decompositions of $H_1(\ol{S}_K,\Z/m)$ 
endowed with a {\it natural action} of $G_K$ on the graph $Y$ such that $|e(Y)/G_K|=k+1$. 
By {\it natural action}, it is meant that, for all $v\in v(Y)$ and $f\in G_K$, it holds 
$D_{f(v)} = f(D_v)$ and $|d(f(v)|=|d(v)|$, and that, for all $e\in e(Y)$ and $f\in G_K$, it holds 
$\mu_{f(e)}(1)=\pm f(\mu_e(1))$.

\item The faces of a $k$-simplex $[D,Y,d,\{\mu_e\}]$  are the $(k-1)$-simplices $[D',Y',d',\{\mu'_e\}]$ 
such that $(D',Y',d')\leq (D,Y,d)$ and it holds $\mu'_e(1)=\pm\mu_e(1)$, for all $e\in e(Y')$.
A face map $\dd_{\mathfrak{o}}\co [D,Y,d,\{\mu_e\}]\ra [D',Y',d',\{\mu'_e\}]$ is then assigned to 
every $G_K$-orbit $\mathfrak{o}$ of unoriented edges of $Y$ such that, for some representative 
$(D',Y',d',\{\mu'_e\})$ of the equivalence class, its edge groups are the edge groups associated 
to the edges of $Y\ssm\mathfrak{o}$. 
\end{enumerate}
\end{definition}

\begin{remarks}\label{simplicial complex}
For $2g-2+n\geq 0$ and $m\geq 2$, let $K$ be a finite index characteristic 
subgroup of $\Pi_{g,n}$ which satisfies the hypotheses of Theorem~\ref{smoothness criterion II}.
Then, it holds:
\begin{enumerate}
\item For a simplex $\sg\in C(S_{g,n})_\bt$, let $(D,Y,d,\{\mu_e\})$ be a framed, $n_K$-marked 
graph decomposition of $H_1(\ol{S}_K,\Z/m)$ induced by the partition $S_K\ssm p_K^{-1}(\sg)$.
Then, the vertex and edge groups of $(D,Y,d,\{\mu_e\})$ are all distinct. Therefore,
there is only one possible action of the group $G_K$ on the graph $Y$ compatible with
its natural action on the homology group $H_1(\ol{S}_K,\Z/m)$. Moreover,
there are no non-trivial self-equivalences of $(D,Y,d,\{\mu_e\})$, so that its equivalence
class is determined modulo unique isomorphisms of the underlying graphs of its 
representatives.

\item A framed $n_K$-marked graph decomposition $(D,Y,d,\{\mu_e\})$ of $H_1(\ol{S}_K,\Z/m)$,
endowed with a natural action of $G_K$, determines a $G_K$-invariant stratum of $\ccM(S_K)^{(m)}$ 
which then parameterizes some curve endowed with a $G_K$-action. This implies that there is a 
conjugate $G_K'$ of $G_K$ inside $\GG(S_K)$ such that the $n_K$-marked graph decomposition 
$(D,Y,d,\{\mu_e\})$ is induced by a partition $S_K\ssm q^{-1}(\sg)$, where $q\co S_K\ra S_K/G_K'$ 
is the quotient map, for some $\sg\in C(S_K/G_K')_\bt$, and it holds $G_K'\equiv G_K\!\mod\GG(m)$. 
In particular, also the $n_K$-marked graph decomposition $(D,Y,d,\{\mu_e\})$ has the properties stated 
in i.).

\item It follows from i.) and ii.) that all non-degenarate simplices of 
$\cG^{n_K}_{G_K}(H_1(\ol{S}_K,\Z/m))_\bt$ are determined by their faces. In other words, 
$\cG^{n_K}_{G_K}(H_1(\ol{S}_K,\Z/m))_\bt$ is the simplicial set associated to a combinatorial 
simplicial complex.

\item For $m\geq 2$, the covering $p_K\co S_K\ra S_{g,n}$ induces a map of finite simplicial sets
$$\mathfrak{g}_{K,(m)}\co C^{K,(m)}(S_{g,n})_\bt\ra\cG^{n_K}_{G_K}(H_1(\ol{S}_K,\Z/m))_\bt$$
and the group $N_{\Sp(H_1(\ol{S}_K,\Z/m))}(G_K)$
acts naturally on $\cG^{n_K}_{G_K}(H_1(\ol{S}_K,\Z/m))_\bt$, with its 
subgroup $G_K$ acting trivially. Therefore, there is a natural action of the Teichm\"uller group 
$\GG_{g,[n]}$ on this simplicial set, via the representation $\rho_{K,(m)}$ of Section~\ref{Loo},
and the map $\mathfrak{g}_{K,(m)}$ is $\GG_{g,[n]}\left/\GG^{K,(m)}\right.$-equivariant.
\end{enumerate}
\end{remarks}

By Theorem~\ref{comparison}, the Looijenga level $\GG^{K,(m)}$ is contained in the geometric
level $\GG^K$. Therefore, there is also a natural surjective map 
$\pi_K\co C^{K,(m)}(S_{g,n})_\bt\tura C^K(S_{g,n})_\bt$. The maps $\mathfrak{g}_{K,(m)}$ and
$\pi_K$ are related in the following way:

\begin{theorem}\label{factorization}For $2g-2+n\geq 0$, let $K$ be a finite index characteristic 
subgroup of $\Pi_{g,n}$ which satisfies the hypotheses of Theorem~\ref{smoothness criterion II}. 
Then, for $m\geq 2$, the natural surjective map 
$\pi_K\co C^{K,(m)}(S_{g,n})_\bt\tura C^K(S_{g,n})_\bt$ factors through the natural surjective map
$\mathfrak{g}_{K,(m)}\co C^{K,(m)}(S_{g,n})_\bt\tura\Im\,\mathfrak{g}_{K,(m)}$ and a surjective
map $\Im\,\mathfrak{g}_{K,(m)}\tura C^K(S_{g,n})_\bt$.
\end{theorem}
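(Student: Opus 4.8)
The plan is to reduce the statement to a rigidity property of graph decompositions together with an equivariant surface-realization argument in the style of Theorem~\ref{abelian nerve}. Choosing an ordering of the vertices of $C(S_{g,n})$ compatible with the $\GG^K$-action (hence also with the smaller $\GG^{K,(m)}$; both levels lie in an abelian level of order $\geq 3$ by ii.) Remarks~\ref{representability}, so this is legitimate by Remark~\ref{simplicial nerve}), we may identify $C^{K,(m)}(S_{g,n})_\bt=C(S_{g,n})_\bt/\GG^{K,(m)}$ and $C^K(S_{g,n})_\bt=C(S_{g,n})_\bt/\GG^K$, and then $\pi_K$ is simply the quotient map. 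It is therefore enough to show that the fibres of $\mathfrak{g}_{K,(m)}$ refine those of $\pi_K$: the surjection $\Im\,\mathfrak{g}_{K,(m)}\tura C^K(S_{g,n})_\bt$ then comes for free, it is surjective because $\pi_K$ is, and it respects the face operators because all three maps are defined on simplices by the same topological recipe. So the whole point is the implication: if two $k$-simplices $\sg,\sg'$ of $C(S_{g,n})$ induce the same equivalence class of framed, $n_K$-marked, $G_K$-equivariant graph decomposition of $H_1(\ol{S}_K,\Z/m)$, then $\sg'\in\GG^K\cdot\sg$.

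First I would rewrite this conclusion using the description of geometric levels from Section~\ref{Loo}: $\sg'\in\GG^K\cdot\sg$ holds if and only if there is a homeomorphism $\tf\in Z_{\GG(S_K)}(G_K)$ with $\tf\big(p_K^{-1}(\sg)\big)=p_K^{-1}(\sg')$. Indeed such a $\tf$ lies in $N_{\GG(S_K)}(G_K)$ and descends, through the epimorphism of sequence $(1)$, to an $f\in\GG_{g,[n]}$ which by construction admits a lift to the centralizer of $G_K$, hence $f\in\GG^K$; and $p_K\circ\tf=f\circ p_K$ forces $f(\sg)=\sg'$. Conversely, an $f\in\GG^K$ with $f(\sg)=\sg'$ has a lift $\tf$ to $Z_{\GG(S_K)}(G_K)$, and then $\tf\big(p_K^{-1}(\sg)\big)=p_K^{-1}(\sg')$.

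Next I would extract a rigid combinatorial matching from the hypothesis. Since $K$ satisfies the hypotheses of Theorem~\ref{smoothness criterion II}, items i.)--iii.) of Remarks~\ref{simplicial complex} apply: the graph decompositions of $H_1(\ol{S}_K,\Z/m)$ attached to $p_K^{-1}(\sg)$ and $p_K^{-1}(\sg')$ have pairwise distinct vertex and edge groups, admit no non-trivial self-equivalences, and carry a unique $G_K$-action compatible with the action of $G_K$ on homology. Therefore the assumed equivalence is witnessed by a \emph{unique} isomorphism $\varphi\co Y\ra Y'$ of the underlying dual graphs, preserving the markings, identifying corresponding vertex and edge groups as submodules of $H_1(\ol{S}_K,\Z/m)$, matching the frames up to sign, and $G_K$-equivariant.

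Finally I would realize $\varphi$ by a $G_K$-equivariant homeomorphism of $S_K$, and this step is the main obstacle. The scheme is the one in the proof of Theorem~\ref{abelian nerve}, upgraded to be equivariant: for each $G_K$-orbit of vertices of $Y$ pick a representative $v$; via $\varphi$ the subsurface $S_v$ of $S_K\ssm p_K^{-1}(\sg)$ is matched with a subsurface $S'_{\varphi(v)}$ of $S_K\ssm p_K^{-1}(\sg')$ of the same topological type, compatibly with the stabilizer $\mathrm{Stab}_{G_K}(v)$ and its action on the relevant homology classes; using the theory of surfaces with symmetry (cf. \cite{GD-H}, \cite{B-R}) one chooses a $\mathrm{Stab}_{G_K}(v)$-equivariant homeomorphism $\tf_v\co S_v\ra S'_{\varphi(v)}$ carrying a system of arcs and curves that realizes the prescribed homology data to its counterpart on the target, and propagates it over the orbit by $\tf_{g(v)}:=g\circ\tf_v\circ g^{-1}$, well defined by the $G_K$-equivariance of $\varphi$. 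The agreement of the frames guarantees that along the circles of $p_K^{-1}(\sg)$ these maps match up to isotopy and with compatible orientations, so they glue to a homeomorphism $\tf$ of $S_K$ with $\tf\big(p_K^{-1}(\sg)\big)=p_K^{-1}(\sg')$ which, being $G_K$-equivariant, lies in $Z_{\GG(S_K)}(G_K)$. The delicate points I expect to fight with are exactly the equivariance on vertex stabilizers and the coherence of the gluing across a $G_K$-orbit of edges; here the rigidity of Remarks~\ref{simplicial complex} and the presence of the frames are what make the required choices canonical enough for the construction to close up.
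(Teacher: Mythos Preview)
Your reduction is the same as the paper's: both come down to showing that if $\sg,\sg'\in C(S_{g,n})$ induce equivalent framed, $n_K$-marked, $G_K$-equivariant graph decompositions of $H_1(\ol{S}_K,\Z/m)$, then there is an element of $Z_{\GG(S_K)}(G_K)$ taking $p_K^{-1}(\sg)$ to $p_K^{-1}(\sg')$; and both use the rigidity of Remarks~\ref{simplicial complex} to pin down a unique graph isomorphism.

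The construction of the $G_K$-equivariant homeomorphism is where you diverge. You try to build it piece by piece: pick $\mathrm{Stab}_{G_K}(v)$-equivariant homeomorphisms $\tf_v\co S_v\ra S'_{\varphi(v)}$ on orbit representatives, propagate over orbits, and glue. The paper instead first invokes Theorem~\ref{abelian nerve m} to get a (not $G_K$-equivariant) $\phi$ in the abelian level $\GG(m)\leq\GG(S_K)$ with $\phi(p_K^{-1}(\sg))=p_K^{-1}(\sg')$, sets $G_K':=\phi G_K\phi^{-1}$, and observes that $G_K$ and $G_K'$ act identically on the submanifold $p_K^{-1}(\sg')$ (since, under the hypothesis on $K$, the action on these curves is read off from $H_1(\ol{S}_K,\Z/m)$). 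Then a \emph{global} covering-theoretic argument --- the $G_K$-covering and the $G_K'$-covering of $S_K$ are both determined by their common restriction to $p_K^{-1}(\sg')$ --- produces a $\psi$ extending the identity on $p_K^{-1}(\sg')$ and intertwining the two actions; the composite $\chi:=\psi^{-1}\circ\phi$ is the desired $G_K$-equivariant map.

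Your route has a real gap at ``choose a $\mathrm{Stab}_{G_K}(v)$-equivariant homeomorphism $\tf_v$''. What the graph decomposition hands you is that the two $\mathrm{Stab}_{G_K}(v)$-actions on $S_v$ and $S'_{\varphi(v)}$ agree on their images in $\mathrm{GL}(H_1(\ol{S}_K,\Z/m))$; that alone does not force topological conjugacy of the actions, and the references \cite{GD-H}, \cite{B-R} do not give it to you for free. To close this you would have to identify the quotients $S_v/\mathrm{Stab}_{G_K}(v)$ and $S'_{\varphi(v)}/\mathrm{Stab}_{G_K}(v)$ as pieces of $S_{g,n}\ssm\sg$ and $S_{g,n}\ssm\sg'$, compare the boundary monodromies into $G_K$ (this is where the frames enter, but only after unwinding), and then lift --- and afterwards still check that the resulting local maps can be chosen to glue $G_K$-equivariantly along the edge orbits. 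That is exactly the work the paper's global $\phi$-then-$\psi$ correction performs in one stroke; doing it vertex-by-vertex is possible in principle but is substantially more delicate than your sketch indicates.
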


\begin{proof}The map $\mathfrak{g}_{K,(m)}$ is induced by the map 
$C(S_{g,n})_\bt\ra\cG^{n_K}_{G_K}(H_1(\ol{S}_K,\Z/m))_\bt$ which associates to a simplex
$\sg\in C(S_{g,n})_\bt$ the equivalence class of the framed, $n_K$-marked graph decomposition 
of $H_1(\ol{S}_K,\Z/m)$ induced by the decomposition $S_K\ssm p_K^{-1}(\sg)$ of $S_K$, where 
$p_K\co S_K\ra S_{g,n}$ is the Galois covering associated to the subgroup $K$ of $\Pi_{g,n}$.

By Remarks~\ref{simplicial complex}, the hypotheses of Theorem~\ref{smoothness criterion II}
on $K$ imply that, if two framed, $n_K$-marked graph decompositions of $H_1(\ol{S}_K,\Z/m)$,
endowed with a natural $G_K$-action, are equivalent, there is a unique isomorphism of the 
underlying graphs which induces the equivalence. 
Therefore, we can identify them by means of this isomorphism.

Let then be given two simplices $\sg,\sg'\in C(S_{g,n})_\bt$ inducing the same framed, 
$n_K$-marked graph decomposition $(D,Y,d,\{\mu_e\})$ of $H_1(\ol{S}_K,\Z/m)$.
In order to prove the theorem, we have to show that there is an $f\in Z_{\GG(S_K)}(G_K)$ 
such that $f(p_K^{-1}(\sg))=p_K^{-1}(\sg')$. By the short exact sequence $(2)$ in 
Section~\ref{Loo},  the image $\ol{f}$ of $f$, by the natural epimorphism 
$N_{\GG(S_K)}(G_K)\tura\GG_{g,[n]}$, is then contained in the level $\GG^K$ 
and is such that $\ol{f}(\sg)=\sg'$.

The $n_K$-marked graph $Y$ determines the topological types of both $S_K\ssm p_K^{-1}(\sg)$ 
and $S_K\ssm p_K^{-1}(\sg')$ and then of $S_{g,n}\ssm\sg$ and $S_{g,n}\ssm\sg'$ as well.
So, at least, we know,  that there is an element $f\in N_{\GG(S_K)}(G_K)$ such that 
$f(p_K^{-1}(\sg))=p_K^{-1}(\sg')$.

Let $p_K^{-1}(\sg)=\cup_{i=0}^k\gm_i$ and $p_K^{-1}(\sg')=\cup_{i=0}^k\gm'_i$. Let us
assume that $\gm_i$ and $\gm_i'$ define the same cycle in $H_1(\ol{S}_K,\Z/m)$, 
for $i=0,\ldots,k$. The hypothesis on the subgroup $K$ implies that $\gm_i$ and $\gm_j$ 
define distinct homology classes whenever $i\neq j$ and then that the action of the group $G_K$ 
on $H_1(\ol{S}_K,\Z/m)$ determines its action on the closed submanifolds $p_K^{-1}(\sg)$ and 
$p_K^{-1}(\sg')$. So, there is a unique $G_K$-equivariant homeomorphism 
$\delta\co p_K^{-1}(\sg)\ra p_K^{-1}(\sg')$ which sends $\gm_i$ to $\gm_i'$, 
for $i=0,\ldots,k$. We claim that $\delta$ can be extended 
to a $G_K$-equivariant homeomorphism $\chi\co S_K\ra S_K$. 

By Theorem~\ref{abelian nerve m}, the simplices $\sg$ and $\sg'$ determine the same
stratum of the abelian level structure $\ccM(S_K)^{(m)}$. Therefore,
there is an element $\phi$ in the abelian level $\GG(m)$ of $\GG(S_K)$ such that it holds 
$\phi(p_K^{-1}(\sg))=p_K^{-1}(\sg')$ and then, by the above assumptions, such that 
$\phi(\gm_i)=\gm_i'$, for $i=0,\ldots,k$. 

Let $G_K':=\phi G_K\phi^{-1}<\GG(S_K)$
and let $\inn\,\phi\co G_K\ra G_K'$ be the isomorphism defined by the assignment 
$\alpha\mapsto\phi\alpha\phi^{-1}$. The homeomorphism $\phi$ then satisfies 
$\phi(\alpha\cdot x)=\inn\,\phi(\alpha)\cdot\phi(x)$, for all $x\in S_K$ and $\alpha\in G_K$.
The group $G_K'$, like the group $G_K$, preserves the union of circles $p_K^{-1}(\sg')$. 
More precisely, it holds $\alpha\cdot x=\inn\,\phi(\alpha)\cdot x$, for all $x\in p_K^{-1}(\sg')$ 
and $\alpha\in G_K$.

Let $S_K':=S_K\ssm p_K^{-1}(\sg')$. By the definition of $\phi$ and $G_K'$, 
the natural maps  $S_K'\ra S_K'/G_K$ and 
$S_K'\ra S_K'/G_K'$ are equivalent Galois \'etale coverings of homeomorphic surfaces.
Thus, the actions of $G_K$ and $G_K'$ on $S_K'$ are conjugated by a self-homeomorphism 
of $S_K'$, which lifts a homeomorphism $S_K'/G_K\ra S_K'/G_K'$.

The $G_K$-covering $S_K\ra S_K/G_K$ and the $G_K'$-covering $S_K\ra S_K/G_K'$  
are determined by their restrictions on the subspace $p_K^{-1}(\sg')$, 
modulo the actions by conjugation, respectively, of the groups $N_{\GG(S_K)}(G_K)$ and 
$N_{\GG(S_K)}(G_K')$ on $S_K$. Therefore, the identity map on the subspace $p_K^{-1}(\sg')$ 
extends to a homeomorphism $\psi\co S_K\ra S_K$ which lifts a homeomorphism 
$S_K/G_K\ra S_K/G_K'$ and satisfies $\psi(\alpha\cdot x)=\inn\,\phi(\alpha)\cdot\psi(x)$, 
for all $x\in S_K$ and $\alpha\in G_K$. 
 
The composition $\chi:=\psi^{-1}\circ\phi\co S_K\ra S_K$ is then a $G_K$-equivariant 
homeomorphism with the required property that $\chi(p_K^{-1}(\sg))=p_K^{-1}(\sg')$.

\end{proof}

\begin{corollary}\label{injectivety}For $2g-2+n\geq 0$, let $\{K\}$ be a cofinal system of finite index 
characteristic subgroups of $\Pi_{g,n}$ satisfying the hypotheses of 
Theorem~\ref{smoothness criterion II} and let $\{p_K\co S_K\ra S_{g,n}\}$ be the associated set 
of coverings. For any fixed integer $m\geq 2$, there is then a 
continuous injective $\cGG_{g,n}$-equivariant map of simplicial profinite sets:
$$\prod_{K}\check{\mathfrak{g}}_{K,(m)}\co\check{C}(S_{g,n})_\bt\hookra
\prod_{K}\cG^{n_K}_{G_K}(H_1(\ol{S}_K,\Z/m))_\bt.$$
\end{corollary}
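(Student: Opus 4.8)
The map in question is the inverse limit, over the cofinal system $\{K\}$, of the maps $\mathfrak{g}_{K,(m)}\co C^{K,(m)}(S_{g,n})_\bt\ra\cG^{n_K}_{G_K}(H_1(\ol{S}_K,\Z/m))_\bt$ from iv.) Remarks~\ref{simplicial complex}, assembled into a single map to the product. The source $\check{C}(S_{g,n})_\bt$ is, by definition, the inverse limit of the $C^{\ld}(S_{g,n})_\bt$ over the tower of geometric levels; by Corollary~\ref{cofinal}, for any fixed $m\geq 2$ the tower of Looijenga levels $\{\GG^{K,(m)}\}$ defines the same procongruence topology, so $\check{C}(S_{g,n})_\bt$ is equally the inverse limit of the $C^{K,(m)}(S_{g,n})_\bt$. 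First I would record these identifications and note that each $\mathfrak{g}_{K,(m)}$ is a map of \emph{finite} simplicial sets; hence the product map is a morphism of simplicial profinite sets and is automatically continuous. Equivariance is inherited from the $\GG_{g,[n]}/\GG^{K,(m)}$-equivariance of each $\mathfrak{g}_{K,(m)}$ recorded in iv.) Remarks~\ref{simplicial complex}, passing to the procongruence completion $\cGG_{g,n}$ in the limit.

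\textbf{Injectivity: reduction to a single simplicial level.} Since we are dealing with simplicial profinite sets, it suffices to prove that the map is injective on $k$-simplices for every $k\geq 0$. A $k$-simplex of $\check{C}(S_{g,n})_\bt$ is a compatible family $(\sg_K)_K$ with $\sg_K\in C^{K,(m)}(S_{g,n})_k$; two such families have the same image under $\prod_K\check{\mathfrak{g}}_{K,(m)}$ exactly when $\mathfrak{g}_{K,(m)}(\sg_K)=\mathfrak{g}_{K,(m)}(\sg'_K)$ for all $K$. The key input is Theorem~\ref{factorization}: for each $K$ satisfying the hypotheses of Theorem~\ref{smoothness criterion II}, the natural surjection $\pi_K\co C^{K,(m)}(S_{g,n})_\bt\tura C^K(S_{g,n})_\bt$ factors through $\mathfrak{g}_{K,(m)}$. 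Consequently, if $\mathfrak{g}_{K,(m)}(\sg_K)=\mathfrak{g}_{K,(m)}(\sg'_K)$, then already $\pi_K(\sg_K)=\pi_K(\sg'_K)$ in $C^K(S_{g,n})_k$. So the two families $(\sg_K)$ and $(\sg'_K)$ have the same image in every finite quotient $C^K(S_{g,n})_k$ of the geometric tower.

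\textbf{Conclusion from cofinality.} Because $\{K\}$ is cofinal among finite index characteristic subgroups and, a fortiori, the associated geometric levels $\{\GG^K\}$ are cofinal in the procongruence tower, the inverse limit of the $C^K(S_{g,n})_\bt$ over this subsystem is again $\check{C}(S_{g,n})_\bt$; the transition maps $C^{K',(m)}(S_{g,n})_\bt\ra C^{K,(m)}(S_{g,n})_\bt$ are compatible with the $\pi_K$ via the geometric transition maps. Hence a compatible family $(\sg_K)$ that maps to the same family in all the $C^K(S_{g,n})_k$ \emph{is} the same element of $\check{C}(S_{g,n})_k=\varprojlim_K C^K(S_{g,n})_k$. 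This gives $(\sg_K)=(\sg'_K)$, proving injectivity. I would then remark that continuity of the inverse of $\prod_K\check{\mathfrak{g}}_{K,(m)}$ onto its image is automatic, since a continuous bijection from a profinite (compact Hausdorff) set to a Hausdorff space is a homeomorphism onto its image.

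\textbf{Main obstacle.} The substantive content is entirely imported from Theorem~\ref{factorization}; the remaining work is bookkeeping about inverse limits of finite simplicial sets and the two cofinality statements (geometric tower versus Looijenga tower, via Corollary~\ref{cofinal}). The one genuine point requiring care is checking that the transition maps in the Looijenga tower are \emph{compatible} with those of the geometric tower in the precise sense that makes the factorizations of Theorem~\ref{factorization} assemble into a factorization at the level of inverse limits — i.e. naturality of $\pi_K$ and $\mathfrak{g}_{K,(m)}$ in $K$. This is where I would spend a paragraph of the actual write-up; everything else is formal.
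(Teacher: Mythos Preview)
Your argument is correct and follows exactly the paper's approach, which simply cites Corollary~\ref{cofinal} and Theorem~\ref{factorization} and leaves the unpacking you have done to the reader. One small remark: the ``main obstacle'' you flag is milder than you suggest, since naturality of $\mathfrak{g}_{K,(m)}$ in $K$ is not needed for injectivity (indeed, the paper notes immediately afterward that it is unclear whether the targets $\cG^{n_K}_{G_K}(H_1(\ol{S}_K,\Z/m))_\bt$ even form an inverse system); all you use is that each individual $\pi_K$ factors through $\mathfrak{g}_{K,(m)}$ and that the composite $\check{C}(S_{g,n})_\bt \to C^{K,(m)}(S_{g,n})_\bt \xrightarrow{\pi_K} C^K(S_{g,n})_\bt$ agrees with the canonical projection, which is automatic from $\GG^{K,(m)}\leq\GG^K$.
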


\begin{proof}The map $\check{\mathfrak{g}}_{K,(m)}\co\check{C}(S_{g,n})_\bt\ra
\cG^{n_K}_{G_K}(H_1(\ol{S}_K,\Z/m))_\bt$ is defined by the composition of the natural projection
$\check{C}(S_{g,n})_\bt\tura C^{K,(m)}(S_{g,n})_\bt$ with the map $\mathfrak{g}_{K,(m)}$
associated to the covering $p_K\co S_K\ra S_{g,n}$. The corollary then
follows from Corollary~\ref{cofinal} and Theorem~\ref{factorization}.

\end{proof}

\begin{remark}\label{cofinal2}By Lemma~\ref{no cut-pairs}, a cofinal system of finite index 
characteristic subgroups of $\Pi_{g,n}$ satisfying the hypotheses of 
Theorem~\ref{smoothness criterion II} can be constructed from any given cofinal system of finite
index characteristic subgroups of $\Pi_{g,n}$. 
\end{remark}

It is not clear whether the set $\{\cG^{n_K}_{G_K}(H_1(\ol{S}_K,\Z/m))_\bt\}$, where 
$\{K\}$ is a cofinal system of finite index characteristic subgroups of $\Pi_{g,n}$ and $m\geq 2$ a
fixed integer, forms an inverse system of finite simplicial sets. This is partially fixed by
the following formal consequence of Corollary~\ref{injectivety}:

\begin{corollary}\label{realization}For $2g-2+n\geq 0$, let $\{K\}$ 
be a cofinal system of finite index characteristic subgroups of $\Pi_{g,n}$ and let 
$\{p_K\co S_K\ra S_{g,n}\}$ be the associated set of coverings. 
Then, for any fixed integer $m\geq 2$, there is 
a cofinal sub-system $\{K_\ld\}_{\ld\in\Ld}$ of $\{K\}$ such that the set 
$\{\Im\,\mathfrak{g}_{K_\ld,(m)}\}_{\ld\in\Ld}$ can be organized in an inverse system of finite 
simplicial sets and there is a $\cGG_{g,n}$-equivariant isomorphism of simplicial profinite sets:
$$\check{C}(S_{g,n})_\bt\cong\lim\limits_{\sr{\textstyle\st\longleftarrow}{\sst \ld\in\Ld}}
\Im\,\mathfrak{g}_{K_\ld,(m)}.$$
\end{corollary}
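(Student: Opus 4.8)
The plan is to identify $\check{C}(S_{g,n})_\bt$ with the inverse limit of the images $\Im\,\mathfrak{g}_{K_\ld,(m)}$ by first promoting these images to an inverse system and then checking that the natural map out of $\check{C}(S_{g,n})_\bt$ is a degreewise bijection.

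First I would fix the cofinal sub-system. By Remark~\ref{cofinal2} (which rests on Lemma~\ref{no cut-pairs}), there is a cofinal refinement $\{K_\ld\}_{\ld\in\Ld}$ of $\{K\}$, with $\Ld$ directed by reverse inclusion, consisting of finite index characteristic subgroups of $\Pi_{g,n}$ satisfying the hypotheses of Theorem~\ref{smoothness criterion II}; after intersecting the $K_\ld$ with $\ker(\Pi_{g,n}\tura H_1(S_{g,n},\Z/3))$ I may moreover assume $\GG^{K_\ld}\leq\GG(3)$, so that $\GG^{K_\ld,(m)}\leq\GG^{K_\ld}$ (Theorem~\ref{comparison}) operates without inversions on $C(S_{g,n})$ (Proposition~\ref{without rotations}) and $C^{K_\ld,(m)}(S_{g,n})_\bt$ is a genuine quotient simplicial set. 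I abbreviate $\mathcal{I}_\ld:=\Im\,\mathfrak{g}_{K_\ld,(m)}\subseteq\cG^{n_{K_\ld}}_{G_{K_\ld}}(H_1(\ol{S}_{K_\ld},\Z/m))_\bt$.

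The main work — and the step I expect to be the real obstacle — is to make $\{\mathcal{I}_\ld\}_{\ld\in\Ld}$ into an inverse system. For $\ld\geq\mu$ the inclusion $K_\ld\subseteq K_\mu$ gives an intermediate covering $S_{K_\ld}\ra S_{K_\mu}$, equivariant for the epimorphism $G_{K_\ld}\tura G_{K_\mu}$ with kernel $K_\mu/K_\ld$, which carries $p_{K_\ld}^{-1}(\sg)$ onto $p_{K_\mu}^{-1}(\sg)$ for every $\sg\in C(S_{g,n})$. I plan to define a \emph{push-forward} on framed, $n_{K_\ld}$-marked graph decompositions of $H_1(\ol{S}_{K_\ld},\Z/m)$ carrying a natural $G_{K_\ld}$-action: quotient the underlying graph by the normal subgroup $K_\mu/K_\ld\unlhd G_{K_\ld}$, and push vertex and edge groups along the natural surjection $H_1(\ol{S}_{K_\ld},\Z/m)\tura H_1(\ol{S}_{K_\mu},\Z/m)$ onto the $K_\mu/K_\ld$-coinvariants. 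By Remarks~\ref{simplicial complex}(i) and (iii), applicable since $K_\ld$ and $K_\mu$ satisfy the hypotheses of Theorem~\ref{smoothness criterion II}, equivalence classes of such decorated graphs are rigid (determined up to unique isomorphism), so the push-forward is well defined on equivalence classes, commutes with face maps (removing a $G_{K_\ld}$-orbit of edges upstairs corresponds to removing a $G_{K_\mu}$-orbit downstairs), and is visibly transitive in $\ld$. This yields transition maps $\mathcal{I}_\ld\ra\mathcal{I}_\mu$ and, by its very construction, the identity $(\text{push-forward})\circ\mathfrak{g}_{K_\ld,(m)}=\mathfrak{g}_{K_\mu,(m)}$ on $C(S_{g,n})_\bt$, so the $\mathfrak{g}_{K_\ld,(m)}$, hence the $\check{\mathfrak{g}}_{K_\ld,(m)}$, are compatible with the transition maps.

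Granting this, the rest is formal. The compatible family $\{\check{\mathfrak{g}}_{K_\ld,(m)}\}$ induces a $\cGG_{g,n}$-equivariant morphism $a\co\check{C}(S_{g,n})_\bt\ra\lim\limits_{\sr{\textstyle\st\longleftarrow}{\sst\ld\in\Ld}}\mathcal{I}_\ld$. It is injective because its composition with the projection to $\prod_\ld\mathcal{I}_\ld$ is the map $\prod_\ld\check{\mathfrak{g}}_{K_\ld,(m)}$ shown injective in Corollary~\ref{injectivety}. For surjectivity I would use that, by Corollary~\ref{cofinal}, the Looijenga levels $\{\GG^{K_\ld,(m)}\}_{\ld\in\Ld}$ are cofinal among the geometric levels of $\GG_{g,n}$ (and are all contained in $\GG(3)$), whence $\check{C}(S_{g,n})_\bt=\lim\limits_{\sr{\textstyle\st\longleftarrow}{\sst\ld\in\Ld}}C^{K_\ld,(m)}(S_{g,n})_\bt$ and, under this identification, $a=\lim_\ld\mathfrak{g}_{K_\ld,(m)}$; since each $\mathfrak{g}_{K_\ld,(m)}$ is surjective, the usual compactness argument for inverse limits of nonempty finite sets makes $a$ surjective in every simplicial degree. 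A continuous bijection of profinite sets being a homeomorphism, $a$ is a degreewise isomorphism, hence an isomorphism of simplicial profinite sets, $\cGG_{g,n}$-equivariant by construction, and $\{\Im\,\mathfrak{g}_{K_\ld,(m)}\}_{\ld\in\Ld}=\{\mathcal{I}_\ld\}_{\ld\in\Ld}$ is the asserted inverse system. The one delicate point to nail down in a full write-up is the rigidity and functoriality of the push-forward on decorated graphs; everything else reduces to cofinality bookkeeping and standard facts about profinite inverse limits.
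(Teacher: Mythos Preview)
Your reduction to constructing transition maps $\mathcal{I}_\ld\to\mathcal{I}_\mu$ is exactly the point, but your proposed push-forward does not do the job. For $K_\ld\subseteq K_\mu$, the map $\ol{S}_{K_\ld}\to\ol{S}_{K_\mu}$ is a \emph{branched} covering, and the induced map on $H_1(\,\cdot\,,\Z/m)$ is \emph{not} the coinvariants quotient; worse, on a single component $S_v\subset S_{K_\ld}\ssm p_{K_\ld}^{-1}(\sg)$ mapping to $S_{v'}\subset S_{K_\mu}\ssm p_{K_\mu}^{-1}(\sg)$ the induced map $H_1(S_v,\Z/m)\to H_1(S_{v'},\Z/m)$ need not be surjective (a connected preimage of a boundary circle can cover it with degree $>1$, so already for an annulus the map is multiplication by that degree). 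Hence the pushed-forward vertex group can be a proper subgroup of $D_{\mu,v'}$, and the result is not the graph decomposition $\mathfrak{g}_{K_\mu,(m)}(\sg)$; your identity $(\text{push-forward})\circ\mathfrak{g}_{K_\ld,(m)}=\mathfrak{g}_{K_\mu,(m)}$ fails. The rigidity from Remarks~\ref{simplicial complex} does not rescue this: it controls self-equivalences of a single decorated graph, not the compatibility of two different homology ambient spaces.

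The paper avoids this entirely and argues formally from the factorization of Theorem~\ref{factorization}. For every $K$ one has surjections
\[
C^{K,(m)}(S_{g,n})_\bt\;\twoheadrightarrow\;\Im\,\mathfrak{g}_{K,(m)}\;\twoheadrightarrow\;C^{K}(S_{g,n})_\bt,
\]
and by Corollary~\ref{cofinal} the towers $\{C^{K,(m)}\}$ and $\{C^{K}\}$ are cofinal with common inverse limit $\check{C}(S_{g,n})_\bt$. Thus, given $K$, one may choose $K'\subseteq K$ in the cofinal system with $\check{C}\to C^{K,(m)}$ factoring through $C^{K'}$; composing gives
\[
\Im\,\mathfrak{g}_{K',(m)}\twoheadrightarrow C^{K'}\to C^{K,(m)}\twoheadrightarrow\Im\,\mathfrak{g}_{K,(m)}.
\]
Iterating this interleaving produces a cofinal sub-system $\{K_\ld\}_{\ld\in\Ld}$ on which $\{\Im\,\mathfrak{g}_{K_\ld,(m)}\}$ is an inverse system, and the sandwich between two towers with limit $\check{C}(S_{g,n})_\bt$ forces the same limit here, $\cGG_{g,n}$-equivariantly. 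No geometry of graph decompositions beyond Theorem~\ref{factorization} is needed; this is why the paper calls it a formal consequence of Corollary~\ref{injectivety}. Your injectivity and surjectivity steps are fine once transition maps exist, but you should obtain those maps by this interleaving rather than by a direct push-forward.
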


\begin{remark}\label{simplicial complex2}
For $K$ as in the hypotheses of Theorem~\ref{smoothness criterion II}, 
by i.) Remarks~\ref{simplicial complex}, the simplicial set $\Im\,\mathfrak{g}_{K,(m)}$ can be 
realized as the simplicial set associated to an ordered simplicial complex.
The same property then holds for the simplicial profinite set $\check{C}(S_{g,n})_\bt$, that is to say,
there is an ordered simplicial complex $\check{C}(S_{g,n})$ whose sets of $k$-simplices 
are profinite, for all $k\geq 0$, and whose associated simplicial set is $\check{C}(S_{g,n})_\bt$.
\end{remark}

By Corollary~\ref{realization} and Corollary~\ref{cong}, the congruence subgroup property 
would follow from a positive answer to the following question in combinatorial topology: 

\begin{question}\label{question}For $g\geq 2$ and some integer $m\geq 2$, is there a cofinal 
system $\{K\}$ of finite index characteristic subgroups of $\Pi_{g}$ such that the finite simplicial 
sets $\Im\,\mathfrak{g}_{K,(m)}$ are simply connected?
\end{question}

Of course, it would be much easier to approach Question~\ref{question} if a more explicit
description of the images of the maps $\mathfrak{g}_{K,(m)}$ were available, for instance,
if the maps $\mathfrak{g}_{K,(m)}$ were known to be surjective for some cofinal system $\{K\}$
and some integer $m\geq 2$.

If the answer to Question~\ref{question} is positive, by Corollary~\ref{realization}, 
there is a cofinal system $\{K_\ld\}_{\ld\in\Ld}$ of finite index characteristic 
subgroups of $\Pi_{g}$, refining the cofinal system $\{K\}$, such that 
$\{\Im\,\mathfrak{g}_{K_\ld,(m)}\}$ forms an inverse system of simply connected finite simplicial 
sets and, then, it holds:
$$\hat{\pi}_1(\check{C}(S_{g})_\bt)\cong\lim\limits_{\sr{\textstyle\st\longleftarrow}
{\sst \ld\in\Ld}}\hat{\pi}_1(\Im\,\mathfrak{g}_{K_\ld,(m)})=\{1\}.$$

For the moment, the only scarce evidence for a positive answer to Question~\ref{question} is 
provided by Proposition~\ref{BP}. Compared to the approach to the subgroup congruence problem 
of \cite{PFT}, this has at least the advantage that it can be dealt with 
standard techniques of combinatorial topology in the spirit of \cite{Kallen} and \cite{Putman1}.

\subsection*{Acknowledgements}
This work was begun during my stay at the Department of Mathematics of the University of Costa 
Rica in San Jos\'e and completed during my stay at the Department of Mathematics of the
University of los Andes in Bogot\'a. I thank both institutions for their support, financial and otherwise. 
I also thank A. Putman for suggesting the second part of Proposition~\ref{BP}.

\bigskip

\noindent Address:\, Departamento de Matem\'aticas, Universidad de los Andes, \\
\hspace*{1.7cm}  Carrera $1^a$ $\mathrm{N}^o$ 18A-10, Bogot\'a, Colombia.
\\
E--mail:\,\,\, marco.boggi@gmail.com

\end{document}